\documentclass[reqno]{amsart}

\usepackage{graphicx} 
\usepackage{amssymb,amsfonts,amsmath,amsthm,amscd} 
\usepackage{multirow}

\usepackage{mathpazo}
\usepackage{palatino}
\usepackage{dsfont}
\usepackage[scaled]{helvet} 
\usepackage{courier} 
\usepackage{mathrsfs} 
\normalfont

\usepackage{mathtools} 
\usepackage[all]{xy}
\usepackage[T1]{fontenc}
\usepackage{calligra}
\usepackage{verbatim}
\usepackage[usenames,dvipsnames]{color}
\usepackage[utf8x]{inputenc}
\usepackage{hyperref}
\usepackage{cite}
\usepackage{enumitem}
\usepackage{subcaption}
\usepackage[font=small,labelfont=bf]{caption}
\theoremstyle{plain}
\newtheorem{theorem}[equation]{Theorem}

\newtheorem{corollary}[equation]{Corollary}
\newtheorem{lemma}[equation]{Lemma}
\newtheorem{proposition}[equation]{Proposition}

\theoremstyle{definition}
\newtheorem{example}[equation]{Example}
\newtheorem{definition}[equation]{Definition}
\newtheorem{problem}[equation]{Problem}
\newtheorem{remark}[equation]{Remark}

\numberwithin{equation}{section} 
\newcommand{\A}{\mathcal{A}}

\newcommand{\m}{\vec{m}}
\renewcommand{\P}{\vec{P}}
\renewcommand{\vec}[1]{\mathbf{#1}}
\newcommand{\semigroup}[1]{\langle #1 \rangle}

\newcommand{\eng}{\mathcal{E}(\mathbf{n},\mathbf{g})}
\newcommand{\shom}{\underline{\operatorname{hom}}}
\newcommand{\be}{{\mathbf e}}

\newcommand{\N}{{\mathds{N}}}
\newcommand{\Z}{{\mathds{Z}}}
\newcommand{\Q}{{\mathds{Q}}}
\newcommand{\R}{{\mathds{R}}}
\renewcommand{\P}{{\mathds{P}}}
\newcommand{\C}{{\mathds{C}}}
\newcommand{\ZZ}{\mathsf{Z}}
\newcommand{\B}{{\mathcal{B}}}
\newcommand{\M}{{\mathcal{M}}}

\newcommand{\g}{\gamma}
\newcommand{\E}{\mathcal{E}}
\newcommand{\bg}{\mathbf{g}}
\newcommand{\n}{\mathbf{n}}

\renewcommand{\geq}{\geqslant}
\renewcommand{\leq}{\leqslant}
\renewcommand{\phi}{\varphi}

\usepackage{enumitem}
\setenumerate{itemsep=5pt} 
\setitemize{itemsep=5pt, leftmargin=10pt} 

\allowdisplaybreaks

\graphicspath{{figures/}}

\newcommand{\twovector}[2]{\big[ \begin{smallmatrix} #1 \\ #2 \end{smallmatrix} \big]}

\usepackage{supporting_files/tikzcd}
\tikzcdset{row sep=large}

\title[Rapidly growing AF algebras]{Rapidly growing AF algebras}
\author[K.~Aguilar]{Konrad Aguilar}
 \address{Department of Mathematics \\
        Pomona College \\
        Claremont \\
        CA 91711 \\
         U.S.A}
 \email{konrad.aguilar@pomona.edu}

 \author[S.R.~Garcia]{Stephan Ramon Garcia}
 \address{Department of Mathematics \\
        Pomona College \\
        Claremont \\
        CA 91711 \\
         U.S.A}
\urladdr{\url{https://stephangarcia.sites.pomona.edu/}}
 \email{stephan.garcia@pomona.edu}
 
\author[E.~Knight]{Evelyne Knight}
 \address{Department of Mathematics \\
        The University of Chicago \\
        Chicago \\
        IL 60637 \\
         U.S.A}
 \email{evelynek@uchicago.edu}

 \author[C.~Marple]{Chloe Marple}
 \address{Department of Mathematics \\
        Pomona College \\
        Claremont \\
        CA 91711 \\
         U.S.A}
 \email{ckme2022@mymail.pomona.edu}

 \author[J. Spielberg]{Jack Spielberg}
  \address{School of Mathematical and Statistical Sciences \\
        Arizona State University \\
        901  S Palm Walk\\
       Tempe, AZ 85281 \\
         U.S.A}
 \email{spielberg@asu.edu}

\subjclass[2020]{46L80, 46L85,  46L89, 20M14, and 05E40}

\keywords{AF algebra, K-theory, UHF algebra, generalized integer, numerical semigroup, Frobenius number, B-spline} 
\begin{document}
    \begin{abstract}
        We introduce certain families of AF algebras associated to Bratteli diagrams arising from numerical semigroup theory, a branch of combinatorics.  Curry--Schoenberg B-splines, staples of computer-aided design, provide insight into the statistical properties of these algebras.  This permits us to consider certain ensembles of ``rapidly growing'' AF algebras from a probabilistic viewpoint.  
    \end{abstract}

\maketitle


\section{Introduction}\label{Section:Introduction}

Our aim in this article is to initiate the study of rapidly growing C*-algebras, 
a novel class of approximately finite-dimensional C*-algebras that can be simultaneously be regarded as generalizations of uniformly hyperfinite algebras and as natural bridges between functional analysis and combinatorics.

An examination of the foundations of AF-algebra theory suggests that a connection to combinatorics should not be unexpected. Recall that a $C^*$-algebra (or \emph{algebra} for short) is \emph{approximately finite dimensional} (AF) if it is the closure of an increasing union of finite-dimensional algebras.  Since any finite-dimensional C*-algebra is a direct sum of full matrix algebras \cite[Thm.~III.1.1]{davidson} and because unital *-homomorphisms between such algebras are determined, up to unitary equivalence, by associated linear diophantine equations over the nonnegative integers \cite[Lem.~III.2.1]{davidson}, AF algebra theory is intimately tied to classical number-theoretic problems dating back to the work of Frobenius and Sylvester (we are not the first to combine number theory with the study of AF algebras; see \cite{EffrosShen, Boca, Mundici, Eckhardt, Spielberg}. Also, see \cite{Jacelon2025}, where statistical information is also analyzed for families of C*-algebras formed by combinatorial data in the form of certain graph C*-algebras). 

Let $\M_n$ denote the algebra of $n \times n$ complex matrices.  The number of unital $*$-homomorphisms from $\M_{n_1}\oplus \cdots \oplus \M_{n_k}$ into $\M_{m_1}\oplus\cdots \oplus \M_{m_{\ell}}$, up to unitary equivalence, depends upon the number of ways each $m_i$ can be factored 
\begin{equation*}
m_i = x_1 n_1 + x_2 n_2 + \cdots + x_k n_k
\end{equation*}
as an element of the additive subsemigroup of $\Z_{\geq 0}=\{0,1,2,\ldots\}$ generated by $n_1,n_2,\ldots,n_k$;
here each $x_i \in \Z_{\geq 0}$.  These dimension-consistency requirements connect AF-algebras to simultaneous factorization problems in numerical semigroups.  

A \emph{numerical semigroup} is an additive subsemigroup of $\Z_{\geq 0}$ with finite complement \cite{Assi, Rosales}. 
Each such semigroup is generated by certain positive integers $n_1,n_2,\ldots,n_k$ with $\gcd(n_1,n_2,\ldots,n_k) = 1$.  
The \emph{Frobenius number} of a numerical semigroup is the largest number in its complement.

For each $r \geq 1$, define
\begin{equation*}
\A_r = \bigoplus_{j=1}^k \M_{g_r n_j},    
\end{equation*}
in which $g_r$ is a suitable rapidly increasing sequence of positive integers such that $g_1=1$ and $g_r \mid g_{r+1}$; for example, $g_r = r!$ is admissible.  
Bratteli diagrams encode information about AF algebras, such as their ideals and isomorphism class \cite{Bratteli72, Bratteli74, Bratteli78}. 
Let $\vec{n} = (n_1,n_2,\ldots,n_k)$ and $\vec{g}= (g_1,g_2,\ldots)$. With these data, we construct an ensemble $\eng$ of Bratelli diagrams as follows.  The elements of $\eng$ are Bratteli diagrams corresponding to a sequence of ``standard'' unital $*$-homomorphisms $\phi_r:\A_r\to\A_{r+1}$. The resulting AF algebras $(\bigcup_{r=1}^{\infty} \A_r)^-$ are  \emph{rapidly growing}.

If there is only one matrix algebra at each level (that is, if $k=1$), then one obtains uniformly hyperfinite (UHF) algebras \cite[Ex.~III.5.1]{davidson}.  In this case, there is essentially only one choice of unital *-homomorphism $\phi_r:\A_r\to\A_{r+1}$ at each level.  Things become much more interesting if there are more direct summands at each level. 
Consequently, we insist that $k\geq 2$ throughout what follows.  Then there are uncountably many Bratteli diagrams in $\eng$ and we may begin to study these diagrams, and the associated AF algebras, from a probabilistic perspective.

There is a natural probability measure on each ensemble $\eng$.  The rapid growth of $g_r$ ensures that the asymptotic properties of factorizations in numerical semigroups come into play when we regard $\eng$ as a probability space.
Fortunately, almost all asymptotic questions about factorizations in numerical semigroups can be answered via spline theory \cite{factorization2,factorization5}, a cornerstone of computer-aided design \cite{curry-schoenberg}. Curry--Schoenberg B-splines are certain piecewise-polynomial functions with some desirable properties. They first arose 
as Peano's kernel in the theory of divided differences \cite{ReciprocalSchur, Davis, Phillips} and later in the study of P\'olya frequency functions (nonnegative functions that are log-concave on their support \cite{SchoenbergWhitney}).

The overarching theme of this paper is the use of spline theory to attack asymptotic combinatorial questions whose answers illuminate the class of rapidly growing AF algebras.  As such, this paper is organized as follows:

\begin{itemize}
    \item Section \ref{Section:Preliminaries} contains background information about Curry--Schoenberg B-splines, numerical semigroups, and AF algebras.
    
    \item We introduce rapidly growing AF algebras in Section \ref{Section:Rapidly}, where we begin our study of these algebras from a probabilistic viewpoint.  

    \item In Section \ref{Section:Injective}, we prove that with probability $1$ a random Bratteli diagram drawn from an ensemble $\eng$ has all but finitely many homomorphisms injective: this justifies our treatment of $\eng$ as a collection of AF algebras.  
    
    \item We consider simplicity in Section \ref{Section:Simplicity} and prove that under certain growth conditions on $\vec{g}$, a random algebra in $\eng$ is simple with probability $1$.
    
    \item Section \ref{Section:Ideals} provides a good reason to study ideals in rapidly growing AF algebras: they tell us that each $\eng$ contains several non-isomorphic UHF algebras.
    
    \item A more detailed study of the isomorphism problem is considered in Section \ref{Section:K-Theory}, where we use K-theory to prove that there are uncountably many distinct isomorphism classes of AF algebras present in certain ensembles $\eng$.

    \item We conclude in Section \ref{Section:Further} with several open problems that we hope will spur further research.  In particular, we believe that there is much more to be done in the study of rapidly growing AF algebras.
\end{itemize}
          
\noindent\textbf{Acknowledgments.} 
We thank   Gabe Udell for several helpful comments.
KA was supported by NSF Grant DMS-2316892.
SRG was supported by NSF Grant DMS-2452084.  SRG, CM, and EK were supported by NSF Grant DMS-2054002. 

\section{Preliminaries}\label{Section:Preliminaries}
This section contains preliminary material about B-splines (Subsection \ref{Subsection:BSpline}),
numerical semigroups (Subsection \ref{Subsection:Numerical}), and AF algebras (Subsection \ref{Subsection:AF}).
Our main goal is to set up the construction and study of rapidly growing AF algebras in Section \ref{Section:Rapidly}.
We go through this background material in some detail since these three threads seem not to have intersected in the past.  Indeed, B-splines are fundamental tools in computer-aided design, numerical semigroups are staple objects in combinatorics, and AF algebras are fundamental examples of operator algebras.

\subsection{B-splines}\label{Subsection:BSpline}
For a sequence of $k$ real \emph{knots} $a_1\leq a_2 \leq \cdots \leq a_k$, not all equal, the Curry--Schoenberg \emph{B-spline} is the piecewise-polynomial function 
\begin{equation*}
M(x;a_1,a_2,\ldots,a_k)=(k-1) M_{1,k-1}(x),
\end{equation*}
in which $M_{i,\ell}(x)$ is defined recursively by
\begin{equation}\label{BsplineBaseCase}
M_{i,1}(x)=
\begin{cases} 
(a_{i+1}-a_i)^{-1} & \text{if $a_i\leq x <  a_{i+1}$},\\ 
0 & \text{otherwise},
\end{cases}
\end{equation}
with
\begin{equation}\label{BsplineRecursion}
M_{i,\ell}(x)=\frac{x-a_i}{a_{i+\ell}-a_i}M_{i,\ell-1}(x)+\frac{a_{i+\ell}-x}{a_{i+\ell}-a_i}M_{i+1,\ell-1}(x) 
\end{equation}
and
\begin{equation}\label{M to M}
    M_{i,\ell}(x)=\frac{1}{\ell}M(x;a_i,a_{i+1},\dots, a_{i+\ell}),
\end{equation} 
as shown in \cite[eq.~(8)]{deBoorRecursion}.
If $a_{i+1}=a_i$, then $M_{i,1}(x)=0$.
The Curry--Schoenberg B-spline $M(x;a_1,a_2,\ldots,a_k)$ is a probability density function supported on $[a_1,a_k]$ that is positive on $(a_1,a_k)$ \cite[Thm.~1, eq.~(1.6)]{curry-schoenberg}.
Although tedious to compute manually, B-splines are implemented in most computer algebra systems; see Figure \ref{Figure:SplineDefinition}.

\begin{figure}
\centering
\begin{subfigure}{0.475\textwidth}
  \centering
  \includegraphics[width=\textwidth]{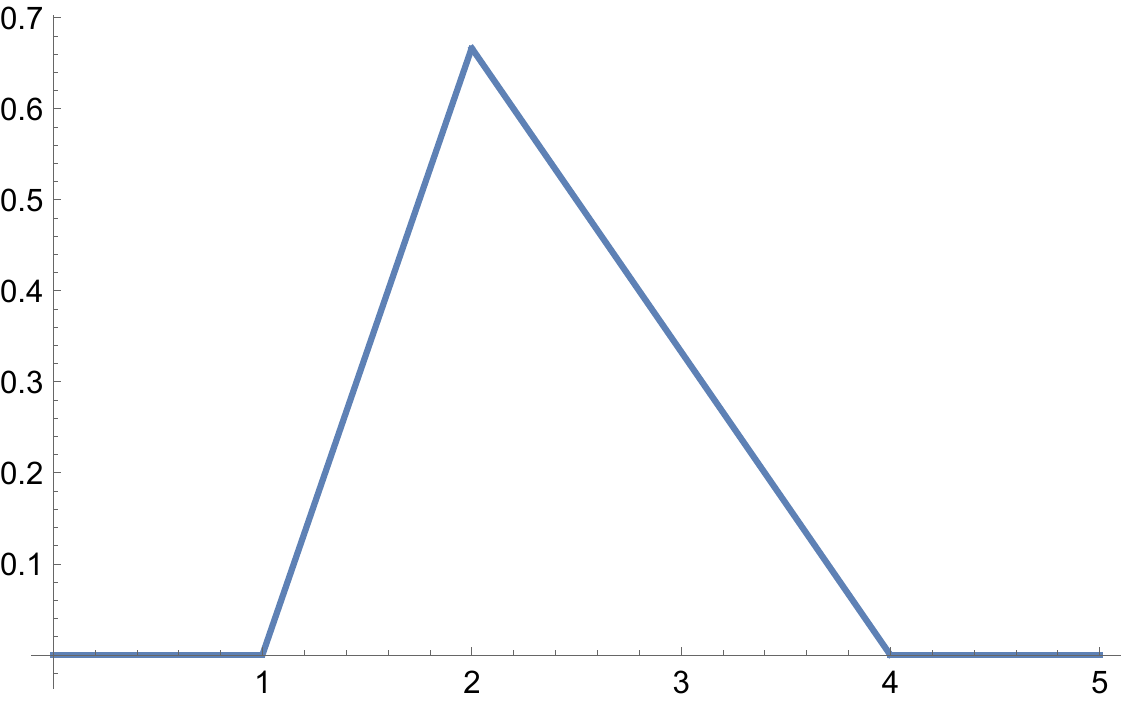}
  \caption{$M(x;1,2,4)$}
\end{subfigure}
\hfill
\begin{subfigure}{0.475\textwidth}
  \centering
  \includegraphics[width=\textwidth]{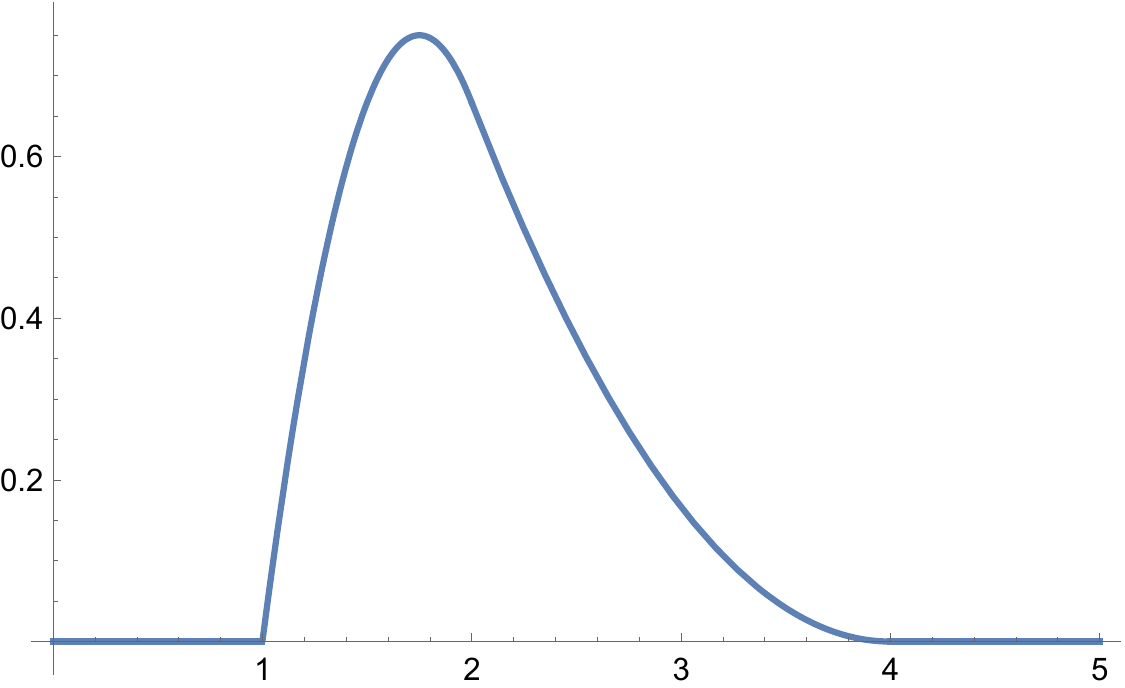}
  \caption{$M(x;1,1,2,4)$}
\end{subfigure}
\\
\begin{subfigure}{0.475\textwidth}
  \centering
  \includegraphics[width=\textwidth]{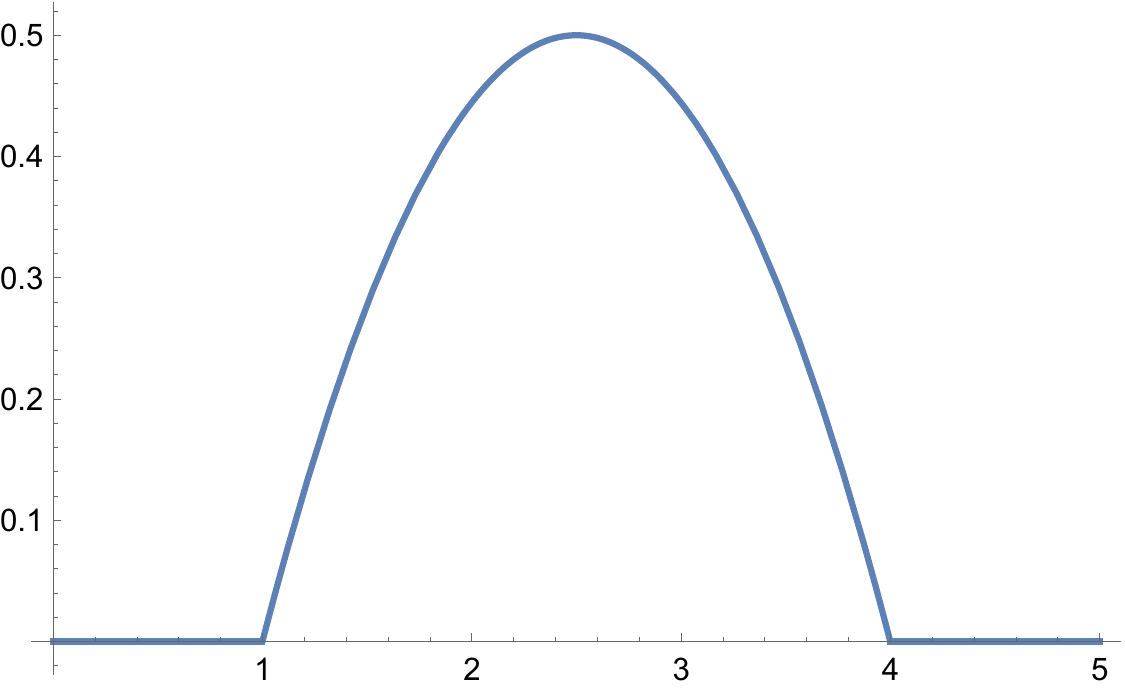}
  \caption{$M(x;1,1,4,4)$}
\end{subfigure}
\hfill
\begin{subfigure}{0.475\textwidth}
  \centering
  \includegraphics[width=\textwidth]{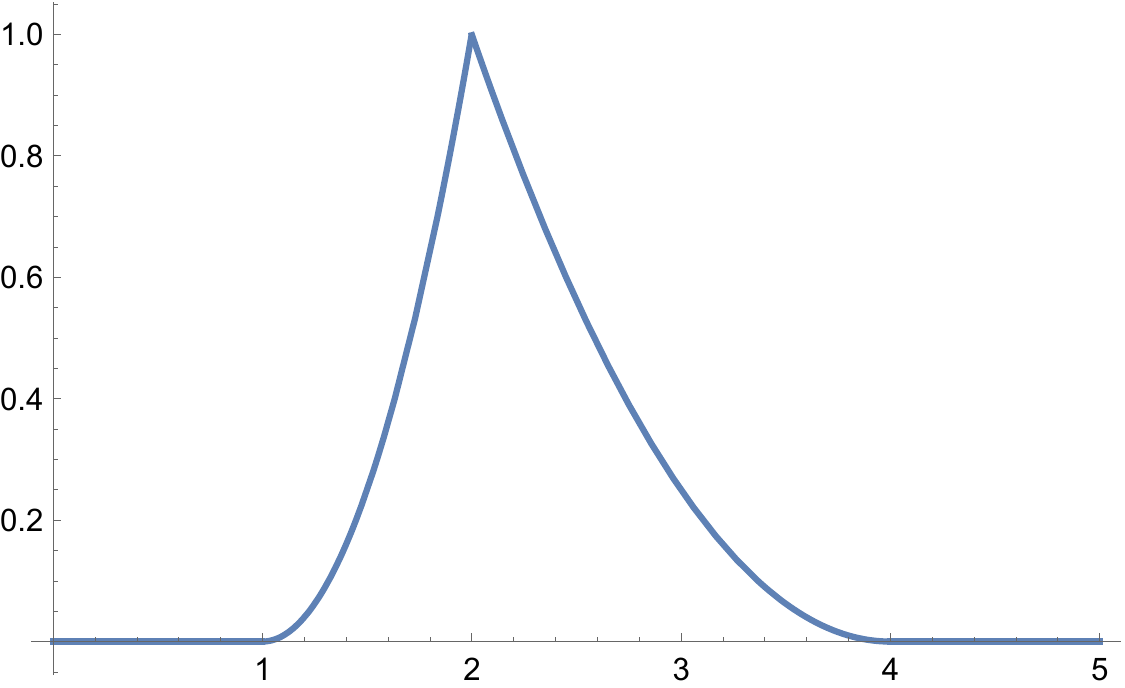}
  \caption{$M(x;1,2,2,4)$}
\end{subfigure}
  \\
\begin{subfigure}{0.475\textwidth}
  \centering
  \includegraphics[width=\textwidth]{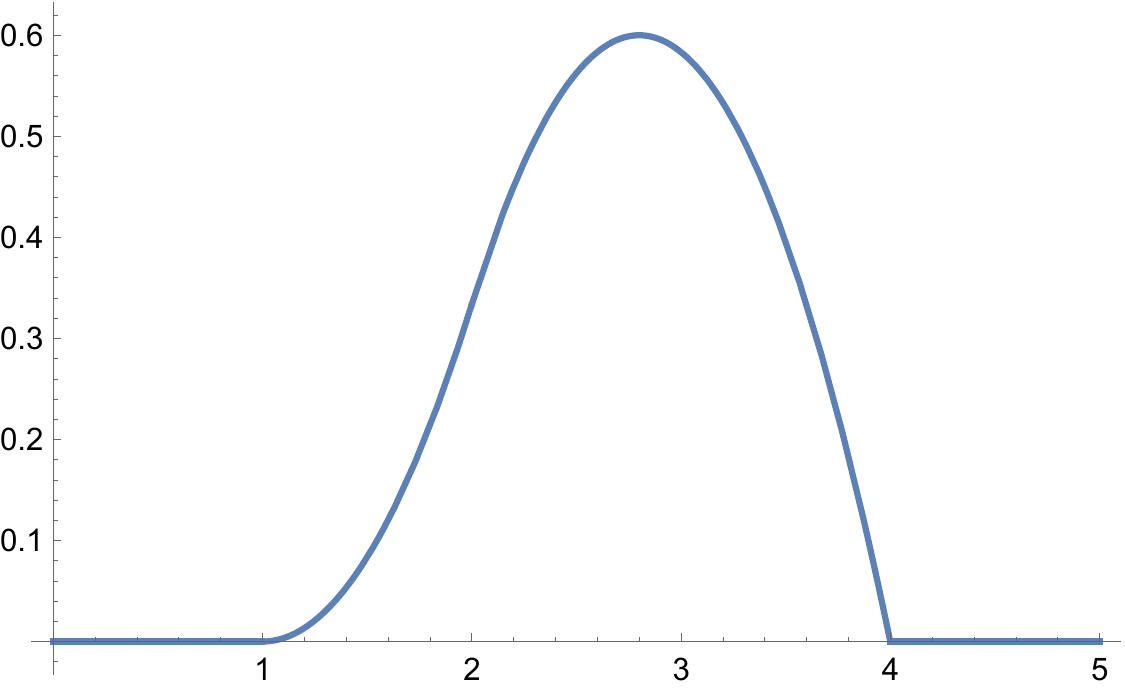}
  \caption{$M(x;1,2,4,4)$}
\end{subfigure}
\hfill
\begin{subfigure}{0.475\textwidth}
  \centering
  \includegraphics[width=\textwidth]{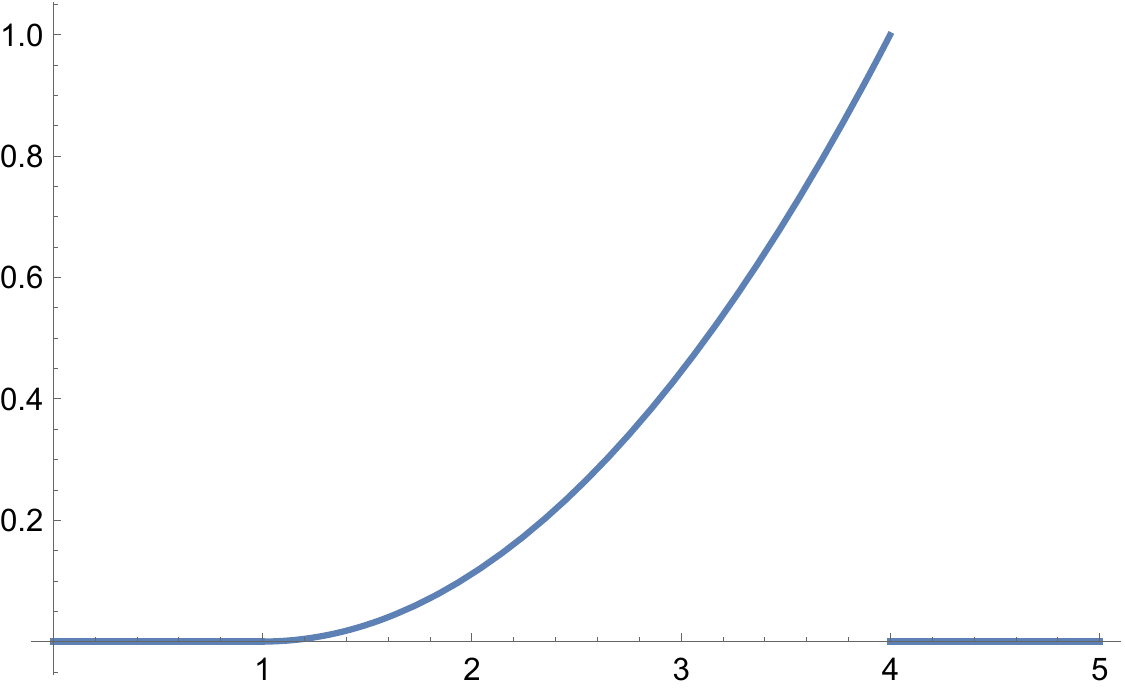}
  \caption{$M(x;1,4,4,4)$}
  \end{subfigure}
\caption{Plots of B-splines for various parameters.  Smoothness across knots is determined by the number of knots and their multiplicities.}
\label{Figure:SplineDefinition}
\end{figure}

For distinct knots $a_1<a_2<\cdots< a_k$, the Curry--Schoenberg B-spline is
\begin{equation}\label{eq:ExplicitSpline}
    M(x;a_1,a_2,\dots, a_k)=(k-1)\sum_{i=1}^k \frac{(a_i-x)_+^{k-2}}{\prod_{j\neq i}(a_i-a_j)},
\end{equation}
in which $x_+=\max\{x,0\}$ \cite[eq.~(1.4)]{curry-schoenberg}. In this case, the B-spline is polynomial of degree $k-2$ for $x\in (a_i,a_{i+1})$ and 
is $k-3$ times differentiable at $x=a_i$. 

If the knots are not distinct, then the corresponding B-spline is still polynomial of degree $k-2$ between knots but is of continuity class $C^m$ at $x=a_i$, where $m={k-2- | \{j:a_i=a_j\} |}$  \cite[Lem.~1]{curry-schoenberg}.  
There are different conventions in different sources, so it is important to remember that we consider splines of degree $k-2$ and that our knot indices start at $1$. For example, in \cite{curry-schoenberg}, the knot indices start at $0$.

If $a_1=a_2= \cdots = a_{k-1} = a$ and $a_k = b > a$, then \cite[(3.5)]{curry-schoenberg} yields
\begin{equation}\label{eq:SplineSame}
    M(x;\underbrace{a,a,\ldots,a}_{\text{$k-1$ times}},b) = 
    \begin{cases}
        \dfrac{(k-1)(b-x)^{k-2}}{(b-a)^{k-1}} & \text{if $a \leq x \leq b$},\\
        0 & \text{otherwise}.
    \end{cases}
\end{equation}

The ``B'' in ``B-spline'' stands for `basis' because the vector space of piecewise-polynomial functions on $\R$ with specified degree, breakpoints, and smoothness at the breakpoints has a basis comprised of B-splines \cite[Thm.~44]{practicalGuide}. 
Given knots $a_1\leq a_2\leq \cdots \leq a_k$, not all equal, the corresponding B-spline is the unique probability density function that is polynomial of degree $k-2$ between the knots, is 
\begin{equation*}
k-2-|\{j:a_i=a_j\}|    
\end{equation*}
times differentiable at each $x=a_i$, and is supported on $[a_1,a_k]$ \cite[p.~9]{Handbook}.

\begin{remark}\label{Remark:PointMass}
    If $a_1 = a_2 = \cdots = a_k = a$, then $M(x;a,a,\ldots,a)$, in which there are $k$ copies of $a$ listed, is undefined.
    However, \eqref{eq:SplineSame} ensures that $M(x;a,a,a,\ldots,a+\epsilon)$, in which there are $k-1$ copies of $a$ listed before the $a+\epsilon$, is a probability density function 
    supported on $[a,a+\epsilon]$ for each $\epsilon > 0$.  As $\epsilon\to0^+$, it converges
    weakly to the unit point measure $\delta_a$ at $a$.  Consequently, we adopt the convention
    \begin{equation*}
        M(x;\underbrace{a,a,\ldots,a}_{\text{$k$ times}}) = \delta_a.
    \end{equation*}
\end{remark}

\begin{remark}
Although the assumption $a_1\leq a_2 \leq \cdots \leq a_k$ is standard in the study of B-splines,
for convenience we permit knots to be unordered.  That is, if $a_1,a_2,\ldots,a_k \in \R$ are not all identical, we define
\begin{equation}\label{eq:SplineUnordered}
M(x;a_1,a_2,\ldots,a_k) = M(x;a_{\pi(1)},a_{\pi(2)},\ldots,a_{\pi(k)}),
\end{equation}
in which $\pi$ is a permutation of $\{1,2,\ldots,k\}$ such that $a_{\pi(1)} \leq a_{\pi(2)} \leq \cdots \leq a_{\pi(k)}$.  
\end{remark}

B-splines are built into many computer software packages, sometimes with different normalizations.
The images in Figure \ref{Figure:SplineDefinition} were produced with \texttt{Mathematica} via 
$\mathtt{BSplineBasis}[\{k-2, \{ a_1,a_2,\ldots,a_k\}\},0,x]$, which is then normalized to yield a probability density function. To obtain an explicit piecewise-polynomial representation of a B-spline, use the \texttt{PiecewiseExpand} command. For example,
\begin{equation*}
    M(x;1,2,2,4,4,4) =
    \begin{cases}
 \frac{5}{27} (x-1)^4 & \text{if $1\leq x<2$}, \\[2pt]
 \frac{5}{432} (x-4)^2 (43 x^2-152 x+136) & \text{if $2\leq x\leq 4$}, \\[2pt]
 0 & \text{otherwise}.
    \end{cases}
\end{equation*}
Consequently, B-splines should be regarded as concrete functions that can easily be evaluated when the need arises.

\subsection{Numerical semigroups}\label{Subsection:Numerical}
By \emph{semigroup}, we mean an additive subsemigroup of $\Z_{\geq 0} = \{0,1,2,\ldots\}$.
We write $S = \semigroup{\cdot}$ to mean that the indicated list of integers generates the semigroup $S$.
A \emph{numerical semigroup} is a semigroup with finite complement in $\Z_{\geq 0}$ \cite{Assi, Rosales}.  
A semigroup $S$ is numerical if and only if $\gcd S = 1$ \cite[Lem.~2.1]{Rosales}. 
Every semigroup in $\Z_{\geq 0}$ is isomorphic to a numerical semigroup.  Indeed, if $S$ is a semigroup with $d = \gcd S \geq 2$, then 
\begin{equation*}
S/d = \{ s/d : s\in S\}    
\end{equation*}
is a numerical semigroup \cite[Prop.~2.2]{Rosales}.

Each numerical semigroup $S$ has a unique minimal system of \emph{generators} 
$1 \leq n_1 < n_2 < \cdots < n_k$ 
such that $\gcd(n_1,n_2,\ldots,n_k) = 1$ and
$S = \langle n_1,n_2,\ldots,n_k\rangle$ \cite[Thm.~2.7]{Rosales}.
The largest natural number not in $S$ is the \emph{Frobenius number} $F(S)$ of $S$ \cite{RamirezDiop}. 
For example, the Frobenius number of $\semigroup{4,5} = \{0,4,5,8,9,10,12,13,14,15,\ldots\}$ is $11$. 
The asymptotic behavior of the Frobenius number has been studied, for example, by V.~I.~Arnold~\cite{arnold}, J.~Bourgain and Ya.~G.~Sinai~\cite{bourgainsinai}, and many others~\cite{alievhenkaicke,RNS}.

A \emph{factorization} of $n \in S = \semigroup{n_1,n_2,\ldots,n_k}$ is an $\vec{x} = [x_i] \in \Z_{\geq 0}^k$ such that 
\begin{equation*}
n = x_1n_1 + x_2 n_2 +\cdots + x_k n_k,
\end{equation*}
that is, $\vec{n}\cdot  \vec{x} = n$, in which $\vec{n} = [n_i] \in \Z_{\geq 0}^k$; our vectors, denoted in boldface, are column vectors.
The set of all factorizations of $n$ is  
\begin{equation}\label{eq:ZZZSn}
\ZZ_S(n) = \{ \vec{x} \in \Z_{\geq 0}^k :  \vec{n}\cdot  \vec{x} = n \}.
\end{equation}
Factorizations in numerical semigroups arise in discrete optimization via knapsack problems~\cite{pisinger1998knapsack,de2013algebraic} and in algebraic geometry and commutative algebra~\cite{abhyankar1967local,barucci1997maximality}.

\begin{remark}
We permit repeated generators and non-minimal generating sets.
If $n_i = n_j$ and $i \neq j$, then we regard $n_i$ and $n_j$ as being different colors.
For example, we regard $\semigroup{2,2,3}$ as being generated by a ``red'' $2$, a ``blue'' $2$, and an uncolored $3$.
Thus, we regard the three factorizations (best viewed in color)
$4 = {\color{red}2} + {\color{red} 2} = {\color{red}2} + {\color{blue} 2} = {\color{blue}2} + {\color{blue} 2}$ of the number $4$ as distinct. 
\end{remark}

We let $|\cdot|$ denote the cardinality of the indicated set (whether we mean absolute value or cardinality will be clear from context).  
If $S$ is a numerical semigroup, then $\ZZ_S(n) \neq \varnothing$ for all $n > F(S)$; 
the quantity $|\ZZ_S(n)|$ is sometimes called the \emph{Sylvester denumerant} \cite{RamirezDiop}.
If $d = \gcd S\geq 2$, then $\ZZ_S(n) = \varnothing$ whenever $d \nmid n$. If $d \mid n$, then $\ZZ_S(n)$ is nonempty whenever $n/d > F(S/d)$.

A \emph{weighted factorization length} is an expression of the form 
$\vec{m}\cdot  \vec{x}$,
in which $\vec{m} \in \R^k$.  
If $\vec{m} = \vec{1}$ is the all-ones vector, we obtain the \emph{length} 
$\ell(\vec{x})=\vec{1}\cdot \vec{x} = x_1+x_2+\cdots+x_k$ of the factorization $\vec{x}$.

\begin{example}
The \emph{McNugget semigroup} is $S = \semigroup{6, 9, 20}$, so named because McDonalds originally served
chicken nuggets in boxes of size $6$, $9$, and $20$.  We have
\begin{align*}
\ZZ_S(132) 
& = \{(2, 0, 6), (0, 8, 3), (3, 6, 3), (6, 4, 3), (9, 2, 3), (12, 0, 3), (1, 14, 0),\\
& \phantom{{}= \{} (4, 12, 0), (7, 10, 0), (10, 8, 0), (13, 6, 0), (16, 4, 0), (19, 2, 0), (22, 0, 0)\}
\end{align*}
These factorizations have lengths $8, 11, 12, 13, 14, 15, 15, 16, 17, 18, 19, 20, 21, 22$, respectively.
Note that $15$ occurs twice since $132$ has two factorizations of length $15$.
\end{example}

In what follows, $\sim$ denotes asymptotic equivalence; that is,
$f(n)\sim g(n)$ means that $\lim_{n\to\infty} f(n) / g(n) = 1$
(an implicit assumption is that the denominator does not vanish for sufficiently large $n$).
In a similar manner, the notation $f(n) \lesssim g(n)$ means that $\limsup_{n\to\infty} f(n)/g(n) \leq 1$. 
We make note of the well-known asymptotic formula for $|\ZZ_S(n)|$ when $S$ is a numerical semigroup \cite{schur1926additiven,comtet,factorization2,NathansonParts}:
\begin{equation}\label{eq:Cardinal}
|\ZZ_S(n)|  \sim \frac{n^{k-1}}{(k-1)! (n_1 n_2 \cdots n_k)} .
\end{equation}
See Remark \ref{Remark:GCD} for the general case.

The next result is from \cite{factorization5}; it is a generalization of the main theorem of \cite{factorization2}, which required distinct knots and used \eqref{eq:ExplicitSpline} instead of splines.
It was during the writing of \cite{WeightedMeans} when G.~Olshanski pointed out the connection between \eqref{eq:ExplicitSpline} and B-splines. Similar formulas go back to Peano's study of divided differences \cite[Chap.~III, Sec. 3.7]{DavisInterpolation}.  The statement of the following theorem is much more precise in \cite{factorization5} than we state here (one can give explicit error bounds; that is, with no implied constants left unspecified).  However, we suppress the superfluous details since for our purposes we have no need for explicit error bounds.

\begin{theorem}\label{Theorem:SemigroupWeak}
Let $S = \langle n_1,n_2,\ldots,n_k \rangle \subseteq \Z_{\geq 0}$ with 
\begin{equation*}
1\leq n_1 \leq n_2\leq \cdots \leq n_k
\quad \text{and} \quad \gcd(n_1, n_2, \dots,n_k)=1. 
\end{equation*}
If $\vec{m}= [m_i]  \in \R^k$ and $\n = [n_i]$ are linearly independent, and $f:\R\to\R$ is bounded and continuous, then
\begin{equation*}
 \frac{1}{|\ZZ_S(n)|} \sum_{ \vec{x} \in \ZZ_S(n)} f \Big( \frac{\vec{m}\cdot  \vec{x} }{n} \Big) \sim
 \int_{-\infty}^{\infty} \, f(t) M\Big(t; \frac{m_1}{n_1}, \frac{m_2}{n_2}, \ldots, \frac{m_k}{n_k} \Big)\,dt
\end{equation*}
as $n \to \infty$, in which $M\big(t; \frac{m_1}{n_1}, \frac{m_2}{n_2}, \ldots, \frac{m_k}{n_k} \big)$ is a Curry--Schoenberg B-spline.
\end{theorem}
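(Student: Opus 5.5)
The plan is to read the left-hand side as an expectation and prove weak convergence of distributions. Let $\mu_n$ be the uniform probability measure on $\ZZ_S(n)$, and let $\nu_n$ be the push-forward of $\mu_n$ under $\vec{x}\mapsto \vec{m}\cdot\vec{x}/n$. The claim is then that $\nu_n$ converges weakly to the measure with density $M(t; m_1/n_1,\ldots,m_k/n_k)$. (The symbol $\sim$ is harmless here: if the integral vanishes, we interpret the statement as equality of limits.)

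The first step is a geometric rescaling. Substitute $y_i = x_i n_i / n$. The lattice points of $\ZZ_S(n)$ then become points of the standard simplex
\begin{equation*}
\Delta=\{\vec{y}\in\R_{\geq 0}^k: y_1+\cdots+y_k=1\}.
\end{equation*}
These points lie on a lattice of spacing of order $1/n$ intersected with this hyperplane. I will show that the uniform measure on these rescaled points converges weakly to normalized Lebesgue (surface) measure on $\Delta$. This is a Riemann-sum argument. Because $\gcd(n_1,\ldots,n_k)=1$, the affine lattice $\{\vec{x}\in\Z^k : \vec{n}\cdot\vec{x}=n\}$ is a translate of a fixed $(k-1)$-dimensional lattice $L$. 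Tile the hyperplane by translates of a fundamental domain of $L$, scaled by $1/n$. Each tile meeting the interior of $\Delta$ contains exactly one point. Tiles meeting $\partial\Delta$ number $O(n^{k-2})$, while the total count is $\asymp n^{k-1}$ by \eqref{eq:Cardinal}. So for continuous $g$ on $\Delta$,
\begin{equation*}
\frac{1}{|\ZZ_S(n)|}\sum_{\vec{x}\in\ZZ_S(n)} g\Big(\frac{x_1n_1}{n},\ldots,\frac{x_kn_k}{n}\Big)\to \int_\Delta g\,d\sigma ,
\end{equation*}
where $\sigma$ is the uniform probability on $\Delta$. I expect this equidistribution step to be the main technical point: the boundary-tile count and the identification of the lattice covolume require care. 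The constant, however, cancels because both sides are normalized, so \eqref{eq:Cardinal} is not even strictly needed beyond the order of growth.

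The second step is a change of variables. The function $\vec{m}\cdot\vec{x}/n$ equals $\sum_i (m_i/n_i) y_i$, which is continuous on $\Delta$. Applying the first step with $g(\vec{y}) = f(\sum_i a_i y_i)$, where $a_i = m_i/n_i$, shows that the left side tends to $\int_\Delta f(\sum a_i y_i)\,d\sigma(\vec{y})$. Boundedness and continuity of $f$ are enough, since $\Delta$ is compact.

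The final step is the classical Curry--Schoenberg fact: if $\vec{y}$ is uniform on $\Delta$, then $\sum a_i y_i$ has density $M(t;a_1,\ldots,a_k)$. This is exactly the Hermite--Genocchi representation of divided differences, which identifies the B-spline as the Peano kernel $\int_\Delta h^{(k-1)}(\sum a_iy_i)\,d\sigma = (k-1)!\,[a_1,\ldots,a_k]h$ together with the normalization in \eqref{eq:ExplicitSpline}. The linear independence of $\vec{m}$ and $\vec{n}$ means the $a_i$ are not all equal, so the pushforward is absolutely continuous rather than a point mass (compare Remark \ref{Remark:PointMass}). For repeated knots, I would either cite this identity in the general form or obtain it by continuity in the knots: both sides depend continuously on $(a_i)$ in the weak topology, and distinct knots are dense.
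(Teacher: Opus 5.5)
The paper does not prove this theorem. It imports it from \cite{factorization5}, as an extension of the distinct-knot result of \cite{factorization2}, and remarks only that the source has explicit error bounds. Your argument is a correct, self-contained proof. It has three steps:
\begin{itemize}
\item After the substitution $y_i=x_in_i/n$, the set $\ZZ_S(n)$ becomes the intersection of $\Delta$ with a translate of the lattice $\frac1n DL$, where $D=\operatorname{diag}(n_1,\ldots,n_k)$.
\item The normalized counting measures on these points converge weakly to the uniform probability $\sigma$ on $\Delta$.
\item The image of $\sigma$ under $\vec{y}\mapsto\sum_i a_iy_i$ is $M(t;a_1,\ldots,a_k)\,dt$. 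This follows from Hermite--Genocchi together with the Peano-kernel identity $[a_1,\ldots,a_k]h=\frac{1}{(k-1)!}\int h^{(k-1)}(t)M(t;a_1,\ldots,a_k)\,dt$, and your normalization matches the paper's.
\end{itemize}
Two details in the equidistribution step each need a sentence. First, every tile $\vec{p}+\frac1nF$ contains exactly one lattice point by construction. So the real split is between tiles contained in $\Delta$, whose point is counted, and the $O(n^{k-2})$ tiles meeting $\partial\Delta$, which are the only ambiguous ones. Second, the translate of $L$ changes with $n$. You should say that the Riemann-sum estimate uses only the diameter and volume of the tiles, so it is uniform in the translate.

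Compared with what the paper cites, your route has two advantages.
\begin{itemize}
\item Coincident knots $m_i/n_i=m_j/n_j$ cost nothing. The identification of the pushforward holds for any knots that are not all equal, which is exactly linear independence of $\vec{m}$ and $\n$. By contrast, the paper says \cite{factorization2} needed distinct knots and the explicit formula \eqref{eq:ExplicitSpline}.
\item The moment asymptotics come for free from the Dirichlet integral $\int_\Delta(\sum_i a_iy_i)^p\,d\sigma=\binom{p+k-1}{p}^{-1}h_p(a_1,\ldots,a_k)$.
\end{itemize}
What the cited version buys is quantitative control. Your uniform-continuity argument, as written, gives only a limit. Tracking constants in your tile estimate would give an error of order $1/n$ for Lipschitz $f$, but not for merely bounded continuous $f$.
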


If $\vec{m} = \vec{1}$ and $f(x) = x^d$ for some $d \in \Z_{\geq 0}$, then we obtain the asymptotic $d$th moment of the factorization lengths in a numerical semigroup \cite[eq.~(5)]{factorization2}:
\begin{equation*}\label{eq:Moments}
\frac{1}{|\ZZ_S(n)|}
\sum_{\vec{x} \in \ZZ_S(n)} \ell(\vec{x})^d
\sim \binom{p+k-1}{p}^{\!-1}\!\! h_p\bigg( \frac{1}{n_1}, \frac{1}{n_2} ,\ldots, \frac{1}{n_k} \bigg) n^p,
\end{equation*}
in which 
\begin{equation}\label{eq:CHS}
h_d(x_1,x_2,\ldots,x_n) = \sum_{1 \leq i_1 \leq \cdots \leq i_{d} \leq n} x_{i_1} x_{i_2}\cdots x_{i_d}
\end{equation}
is the \emph{complete homogeneous symmetric polynomial} (CHS polynomial) of degree $d$ in the $k$ variables $x_1,x_2,\ldots,x_k$ \cite[Sec.~7.5]{StanleyBook2}.  It is the sum of all monomials of degree $d$ in $x_1,x_2,\ldots,x_n$.  For example,
the first several CHS polynomials in $x_1$ and $x_2$ are 
$h_0(x_1,x_2)= 1$,
$h_1(x_1,x_2)= x_1+x_2$, and
$h_2(x_1,x_2)= x_1^2+x_1 x_2+x_2^2$.
An important special case of \eqref{eq:Moments} gives the asymptotic mean factorization length:
\begin{equation}\label{eq:Mean}
\frac{1}{|\ZZ_S(n)|} \sum_{\vec{x} \in \ZZ_S(n)} \ell(\vec{x})
\sim \frac{n}{k}\bigg(\frac{1}{n_1} + \frac{1}{n_2} + \cdots + \frac{1}{n_k} \bigg).
\end{equation}

\begin{remark}
In 1977, D.B.~Hunter proved that the CHS polynomials of even degree are positive-definite functions: they vanish only at the origin and are otherwise positive \cite{Hunter}.  This has been reproved many times, for example in \cite[Thm.~1]{CHS-Norms}, \cite[Thm.~2]{WeightedMeans}, \cite[Cor.~17]{factorization2}, and Tao's blog \cite[Thm.~1]{Tao}; see \cite{bouthat2024hunterspositivitytheoremrandom} for more proofs and references.  Hunter's theorem is also the starting point for the developing theory of random vector norms \cite{CHS-Norms,RVN1,RVN2}.  In another direction,
the second named author and J.~Vol\v{c}i\v{c} recently proved a noncommutative generalization of Hunter's result, in which the scalar variables are replaced with hermitian operators \cite{garcia2025noncommutativegeneralizationhunterspositivity}.
\end{remark}

\begin{remark}\label{Remark:LocalLimit}
Theorem \ref{Theorem:SemigroupWeak} says that the probability measures
\begin{equation*}
    \nu_n = \frac{1}{| \ZZ_S(n) |} \sum_{\vec{x} \in \ZZ_S(n)} \delta_{\frac{ \vec{m}\cdot \vec{x}}{n}}
\end{equation*}
converge weakly to the absolutely continuous measure $\nu$ with density function
$M(x; \frac{m_1}{n_1}, \frac{m_2}{n_2}, \ldots, \frac{m_k}{n_k})$.  All of the measures involved are supported in the same compact interval $[a,b]$, in which $a = \min_{1\leq i \leq k} m_i/n_i$ and $b = \max_{1\leq i \leq k} m_i / n_i$.  Since $\nu$ does not assign positive measure to any point, $\nu_n(I)\to \nu(I)$ for any interval $I \subseteq \R$ \cite[Thm.~25.8]{Billingsley}, we obtain the asymptotic version of \cite[Thm.~C]{factorization5}:
\begin{equation}\label{eq:LocalLimit}
     \frac{1}{|\ZZ_S(n)|} \sum_{ \substack{ \vec{x} \in \ZZ_S(n) \\ (\vec{m}\cdot \vec{x})/n \in I }} f \Big( \frac{\vec{m}\cdot \vec{x} }{n} \Big) \sim
 \int_I \, f(t) M\Big(t; \frac{m_1}{n_1}, \frac{m_2}{n_2}, \ldots, \frac{m_k}{n_k} \Big)\,dt.
\end{equation}
\end{remark}

\begin{remark}\label{Remark:AllSame}
    If $\vec{m} = \vec{n} = \vec{1}$, then Theorem \ref{Theorem:SemigroupWeak} does not immediately apply.  However, \eqref{eq:Moments} and \eqref{eq:Mean} hold nevertheless.  Indeed, one can
    perturb $\vec{m}$ to satisfy the hypotheses of Theorem \ref{Theorem:SemigroupWeak}, then appeal to Remark \ref{Remark:PointMass} and weak convergence.  The same occurs in \eqref{eq:Cardinal}, and this can be worked out directly.  If $\vec{n} = \vec{1}$, then the factorizations of $n$ are the weak compositions of $n$ into $k$ parts.  A routine stars-and-bars combinatorial argument tells us that there are 
    \begin{equation*}
        \binom{n+k-1}{k-1} 
        = \frac{(n+k-1)(n+k)\cdots(n+1)}{(k-1)!}
        \sim \frac{n^{k-1}}{(k-1)! (1\cdot1 \cdots 1)}
    \end{equation*}
    many of them as $n \to \infty$, in accordance with \eqref{eq:Cardinal}.  
\end{remark}

\begin{remark}\label{Remark:GCD}
    If $d= \gcd(n_1,n_2,\ldots,n_k) \neq 1$, then Theorem \ref{Theorem:SemigroupWeak} holds in a modified form.
    First observe that $\ZZ_S(n) = 0$ whenever $d \nmid n$.  Then apply Theorem \ref{Theorem:SemigroupWeak} to 
    $S/d = \semigroup{ \frac{n_1}{d}, \frac{n_2}{d},\ldots, \frac{n_k}{d}}$ in which $d\mid n$ and each $n_i$ is replaced with $n_i/d$. 
    Then
    \begin{equation}
    \label{eq:Z-gcd-change}
        \ZZ_S(n)=\ZZ_{S/d}(n/d),
    \end{equation}
    so as $n\to\infty$ with $d\mid n$, \eqref{eq:Cardinal} ensures that
    \begin{equation}\label{eq:Cardinal-with-d}
    |\ZZ_S(n)|=|\ZZ_{S/d}(n/d)| 
\sim \frac{ (\frac{n}{d})^{k-1}}{(k-1)!\frac{n_1}{d} \frac{n_2}{d} \cdots \frac{n_k}{d}}  
=\frac{ dn^{k-1}}{(k-1)!(n_1n_2\cdots n_k)}  .
    \end{equation}
\end{remark}

The following corollary generalizes Theorem \ref{Theorem:SemigroupWeak} and \eqref{eq:LocalLimit}
in a manner suitable for our needs:
we need not assume that the generators are relatively prime.

\begin{corollary}\label{Corollary:SemigroupWeak}
Let $S = \langle n_1,n_2,\ldots,n_k \rangle \subseteq \Z_{\geq 0}$ with $1\leq n_1 \leq n_2\leq \cdots \leq n_k$ and $d=\gcd(n_1,n_2,\ldots,n_k)$. Let $S/d = \semigroup{ \frac{n_1}{d}, \frac{n_2}{d},\ldots, \frac{n_k}{d}}$. If $\vec{m}= [m_i]  \in \R^k$ and $\n = [n_i]$ are linearly independent, $I \subseteq \R$ is an interval, $dI$ is its $d$-dilation, and $f:\R\to\R$ is bounded and continuous, then
\begin{equation*}\label{eq:LocalLimit-gcd-d}
 \frac{1}{|\ZZ_S(n)|} \sum_{ \substack{ \vec{x} \in \ZZ_S(n) \\ (\vec{m}\cdot \vec{x})/n \in I }} f \Big( \frac{\vec{m}\cdot  \vec{x} }{n} \Big) \sim
 \int_{dI} \, f(t/d) M\Big(t; \frac{m_1d}{n_1}, \frac{m_2d}{n_2}, \ldots, \frac{m_kd}{n_k} \Big)\,dt
\end{equation*}
as $n\to\infty$ with $d \mid n$,
in which $M\big(t; \frac{m_1d}{n_1}, \frac{m_2d}{n_2}, \ldots, \frac{m_kd}{n_k} \big)$ is a Curry--Schoenberg B-spline.
\end{corollary}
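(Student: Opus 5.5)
The corollary reduces to the coprime case through the identification \eqref{eq:Z-gcd-change} from Remark \ref{Remark:GCD}. Write $n = dN$ and $n_i' = n_i/d$, so that $S/d = \semigroup{n_1',\ldots,n_k'}$ has $\gcd(n_1',\ldots,n_k') = 1$, and $\ZZ_S(n) = \ZZ_{S/d}(N)$ as sets of vectors $\vec{x}$, because $\vec{n}\cdot\vec{x} = n$ if and only if $\vec{n}'\cdot\vec{x} = N$. The left-hand side of the corollary is therefore an average over $\ZZ_{S/d}(N)$, and $N \to \infty$ exactly when $n\to\infty$ with $d\mid n$. The linear independence of $\vec{m}$ and $\vec{n}$ is equivalent to that of $\vec{m}$ and $\vec{n}' = \vec{n}/d$, so the hypotheses of Theorem \ref{Theorem:SemigroupWeak} and of \eqref{eq:LocalLimit} hold for $S/d$.

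Next rescale the statistic. We have $(\vec{m}\cdot\vec{x})/n = \frac{1}{d}(\vec{m}\cdot\vec{x})/N$. Set $g(t) = f(t/d)$, which is again bounded and continuous. The condition $(\vec{m}\cdot\vec{x})/n \in I$ is equivalent to $(\vec{m}\cdot\vec{x})/N \in dI$. The left-hand side thus becomes
\begin{equation*}
\frac{1}{|\ZZ_{S/d}(N)|}\sum_{\substack{\vec{x}\in \ZZ_{S/d}(N)\\ (\vec{m}\cdot\vec{x})/N \in dI}} g\Big(\frac{\vec{m}\cdot\vec{x}}{N}\Big).
\end{equation*}
Apply \eqref{eq:LocalLimit} for $S/d$ with the interval $dI$ (again an interval, possibly degenerate if $d$ is fixed and $I$ is a point) and the function $g$. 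The knots are $m_i/n_i' = m_i d/n_i$, so the limit is $\int_{dI} f(t/d)\,M(t; \tfrac{m_1 d}{n_1},\ldots,\tfrac{m_k d}{n_k})\,dt$, which is the claimed formula. The ordering hypothesis $n_1\le\cdots\le n_k$ passes to the $n_i'$, although by \eqref{eq:SplineUnordered} ordering is irrelevant anyway.

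The only real care point is that \eqref{eq:LocalLimit} is stated with $\sim$, which requires the right-hand integral to be nonzero. If it vanishes (for instance when $I$ misses the support of the spline, or when $f$ integrates to $0$ against it), the honest statement is that the difference of the two sides tends to $0$. To handle this, I would phrase the argument through weak convergence, as in Remark \ref{Remark:LocalLimit}. The empirical measures $\nu_N$ converge weakly to an absolutely continuous limit, so the boundary of $dI$ is a null set. Hence $\int_{dI} g\,d\nu_N \to \int_{dI} g\,d\nu$ by the portmanteau theorem applied to the bounded function $g\mathbf{1}_{dI}$, whose discontinuities lie in a $\nu$-null set. This convergence yields the asymptotic equivalence whenever the limit is nonzero, and it is the interpretation to adopt otherwise.
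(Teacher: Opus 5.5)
Your proposal is correct and follows the paper's proof. Like the paper, you identify $\ZZ_S(n)$ with $\ZZ_{S/d}(n/d)$ via \eqref{eq:Z-gcd-change}, rewrite the constraint as $(\vec{m}\cdot\vec{x})/(n/d)\in dI$, and then invoke Theorem \ref{Theorem:SemigroupWeak} together with Remark \ref{Remark:LocalLimit} for $S/d$, whose knots are $m_i d/n_i$. Your additional point about reading $\sim$ as convergence when the right-hand integral vanishes, justified by weak convergence applied to $f(\cdot/d)\mathbf{1}_{dI}$ (whose discontinuities form a $\nu$-null set), is a sound refinement that the paper leaves implicit.
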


\begin{proof}
    Observe that \eqref{eq:Z-gcd-change} implies that
    \begin{equation*}
         \frac{1}{|\ZZ_S(n)|} \sum_{ \substack{ \vec{x} \in \ZZ_S(n) \\ (\vec{m} \cdot \vec{x})/n \in I}}  f \Big( \frac{\vec{m}\cdot  \vec{x} }{n} \Big)
         =
         \frac{1}{|\ZZ_{S/d}(n/d)|} \sum_{ \substack{\vec{x} \in \ZZ_{S/d}(n/d) \\ \frac{ \vec{m} \cdot \vec{x}}{n/d} \in dI} } f \Big( \frac{\vec{m}\cdot \vec{x} }{d(n/d)} \Big) .
    \end{equation*}
    Now apply Theorem \ref{Theorem:SemigroupWeak} and appeal to Remark \ref{Remark:LocalLimit}.
\end{proof}

\subsection{AF algebras}\label{Subsection:AF}
We denote by $\M_{m\times n}(\cdot)$ the set of $m \times n$ matrices with entries in the indicated set.  Let $\M_n$ denote the C*-algebra of $n\times n$ complex matrices. Every finite-dimensional C*-algebra is *-isomorphic to a direct sum of matrix algebras \cite[Thm.~III.1.1]{davidson}; in particular, each such algebra is unital.

Given unital C*-algebras $\A$ and $\B$, we denote the set of unital *-homomorphisms from $\A$ to $\B$ by $\hom(\A, \B)$.
As an abuse of language, we sometimes drop the asterisk in *-isomorphic and simply call two C*-algebras \emph{isomorphic}.  Moreover, we often use the word ``algebra'' as an abbreviation of ``C*-algebra.''
If 
\begin{equation*}
\A=\bigoplus_{i=1}^k \M_{n_i} \qquad\text{and}\qquad \B=\bigoplus_{j=1}^{\ell} \M_{m_j},     
\end{equation*}
then $\phi\in \hom(\A, \B)$ is \emph{standard} if for each $(a_1,a_2, \ldots,a_k) \in \A$, we have
\begin{equation*}
\phi((a_1,a_2, \ldots,a_k))=(b_1, b_2, \ldots, b_{\ell}),
\end{equation*}
in which each $b_j \in \M_{m_j}$ is block diagonal with diagonal blocks among $a_1,a_2,\ldots,a_k$ listed in increasing order of subscript. 
For example, the unital *-homomorphism $\phi: \M_2\oplus \M_4 \to \M_6\oplus \M_4 $ defined by
\begin{equation}\label{eq:standard}
\phi(a_1, a_2) = \left( \begin{bmatrix}
    a_1 & \\
    & a_2
\end{bmatrix}, 
\begin{bmatrix}
    a_1 & \\
    & a_1
\end{bmatrix}
\right)
\end{equation}
is standard.  The set $\shom(\A,\B)$ of standard, unital *-homomorphisms is finite.

Every $\psi\in \hom(\A, \B)$ is unitarily equivalent to some $\phi\in\shom(\A, \B)$ \cite[Lemma III.2.1]{davidson}. 
Each $\phi\in \shom(\A, \B)$ is uniquely determined by its \emph{matrix of partial multiplicities} 
$X=[x_{ji}] \in \M_{\ell\times  k}(\Z_{\geq 0})$, which satisfies
\begin{equation*}
    \sum_{i=1}^k x_{ji}n_i=m_j
\end{equation*}
for $1 \leq j \leq \ell$ \cite[Lemma III.2.1]{davidson}.
For the $\phi$ in \eqref{eq:standard}, we have  
$X = \big[ \begin{smallmatrix}
    1 & 1\\ 2 & 0
\end{smallmatrix}\big]$.

The \emph{number of times $\phi$ maps $\M_{n_i}$ into $\M_{m_j}$} is the partial multiplicity $x_{ji}$ (this is well defined, even if some of the $n_i$ or $m_j$ are repeated). For example,
in \eqref{eq:standard}, $x_{21} = 2$  is the number of times $\phi$ maps $\M_2$ into $\M_4$. 

The \emph{number of times $\M_{n_i}$ appears in $\phi(\A)$} is 
\begin{equation*}
X \vec{e}_j = \sum_{i=1}^k x_{ij} ,
\end{equation*} 
in which $\vec{e}_j$ denotes the $j$th standard basis vector.  It is the total number
of times $\phi$ maps $\M_{n_i}$ into any of the $\M_{m_j}$.
For the $\phi$ in \eqref{eq:standard}, $\M_2$ appears $3=1+2$ times in $\phi(\A)$. 
Note that we are not counting algebras up to isomorphism, but rather the number of copies of a matrix algebra that appear.

The \emph{number of matrix algebras in $\phi(\A)$} is \begin{equation*}
 \vec{1}\cdot  X \vec{1} = \sum_{j=1}^{\ell} \sum_{i=1}^k x_{ji}.
\end{equation*} 
For the $\phi$ in \eqref{eq:standard}, we see that $\phi(\A)$ contains $4$ matrix algebras.  It is important to note that we count by multiplicity and we do not identify isomorphic algebras.

An \emph{AF algebra} is a C*-algebra $\A$ that is a direct limit of a direct sequence of finite-dimensional C*-algebras $(\A_r, \phi_r)_{r=1}^{\infty}$, where each $\phi_r \in \hom( \A_r,\A_{r+1})$ \cite[Ch.~6]{Murphy}.
We denote the direct sequence by the diagram
\begin{equation*}
\A_1 \overset{\phi_1}{\longrightarrow}
\A_2 \overset{\phi_2}{\longrightarrow}
\A_3 \overset{\phi_3}{\longrightarrow}\cdots
\end{equation*}
and note that
\begin{equation*}
    \A=\overline{\bigcup_{r=1}^{\infty} \phi^r(\A_r)},
\end{equation*}
in which $\phi^r :\A_r \to \A$ is the natural map and $\phi^{r+1}\circ \phi_r = \phi^r$ for each $r\in \N$ \cite[p.~176, Ch.~6]{Murphy}. We refer to $r$ or $\A_r$ as a \emph{level} of the Bratteli diagram or associated AF algebra. 
The associated \emph{Bratteli diagram} of a direct sequence is described in \cite{Bratteli72}.

The weight of the edge between $n_j$ on the $r$th level and $n_i$ on the $(r+1)$th level of the Bratteli diagram equals the $(i,j)$ entry of the partial-multiplicity matrix.

\begin{example}
    For $\A_1=\M_{12}\oplus \M_{42}$ and $\A_2=\M_{48}\oplus \M_{168}$, the matrices
    \begin{equation*}
    \begin{bmatrix}
    4&0\\
    0&4
    \end{bmatrix}, 
    \qquad \begin{bmatrix}
        4&0\\
        7&2
    \end{bmatrix},\qquad
        \begin{bmatrix}
        4&0\\
        14&0
    \end{bmatrix},
    \end{equation*}
    correspond to the three potential partial Bratteli diagrams
\[\begin{tikzcd}
	12 & 48 & 12 & 48 & 12 & 48 \\
	42 & 168 & 42 & 168 & 42 & 168
	\arrow["4", from=1-1, to=1-2]
	\arrow["4", from=1-3, to=1-4]
	\arrow["7"{pos=0.4}, from=1-3, to=2-4]
	\arrow["4", from=1-5, to=1-6]
	\arrow["14"', from=1-5, to=2-6]
	\arrow["4", from=2-1, to=2-2]
	\arrow["2", from=2-3, to=2-4]
\end{tikzcd}\]
where, as is customary, each vertex reflects the dimension of the corresponding matrix algebra and
edge weights describe the multiplicity of an embedding.
\end{example}

There are certain moves on Bratteli diagrams that do not change the isomorphism class of the associated AF algebra \cite[p.~179]{lectures-on-op-theory}. The phrase \emph{all but infinitely many} refers to a set with infinite complement in some larger set that is understood from context. Two Bratteli diagrams give rise to isomorphic AF algebras if and only if they are related by a sequence of moves of the following kinds.
    \begin{enumerate}[leftmargin=*]
        \item Remove the first level.
        \item Remove all but infinitely many (non-inital) levels and appropriately compose the edges.
    \end{enumerate}
    We refer to the second move as \emph{telescoping}.
Recall that each standard unital *-homomorphism $\phi_r:\A_r\to\A_{r+1}$ corresponds to a partial-multiplicity matrix $X_r\in \M_k(\Z_{\geq 0})$. With this identification, given a sequence
\begin{equation*}
        \cdots \longrightarrow\mathcal A_{r-1}\overset{X_{r}}{\longrightarrow}\mathcal A_r\overset{X_{r+1}}{\longrightarrow}\mathcal A_{r+1}\longrightarrow\cdots,
\end{equation*}
the second move yields
\begin{equation*}
        \cdots \longrightarrow\mathcal A_{r-1}\overset{X_{r+1}X_{r}}{\longrightarrow}\mathcal A_{r+1}\longrightarrow\cdots.
\end{equation*}

\begin{example}
We apply the second move to the second and third levels of the Bratteli diagram on the left.  This yields the Bratteli diagram on the right.
\begin{equation*}\begin{tikzcd}[cramped]
	{d_1} & {d_2} & {d_3} & {d_4} & \cdots & {d_1} &&& {d_4} & \cdots \\
	{e_1} & {e_1} & {e_3} & {e_4} & \cdots & {e_1} &&& {e_4} & \cdots
	\arrow["{{{x_{11}^{\scriptscriptstyle(1)}}}}", from=1-1, to=1-2]
	\arrow["{{{x_{11}^{(2)}}}}", from=1-2, to=1-3]
	\arrow["{{{x_{21}^{(2)}}}}"{description}, from=1-2, to=2-3]
	\arrow["{{{x_{11}^{(3)}}}}", from=1-3, to=1-4]
	\arrow[from=1-4, to=1-5]
	\arrow["{{{x_{11}^{(1)}x_{11}^{(2)}x_{11}^{(3)}}}}", from=1-6, to=1-9]
	\arrow["{{{x_{11}^{(1)}x_{21}^{(2)}x_{22}^{(3)}}}}"{description}, from=1-6, to=2-9]
	\arrow[from=1-9, to=1-10]
	\arrow["{{{x_{22}^{(1)}}}}"', from=2-1, to=2-2]
	\arrow["{{{x_{22}^{(2)}}}}"', from=2-2, to=2-3]
	\arrow["{{{x_{22}^{(3)}}}}"', from=2-3, to=2-4]
	\arrow[from=2-4, to=2-5]
	\arrow["{{{x_{22}^{(1)}x_{22}^{(2)}x_{22}^{(3)}}}}"', from=2-6, to=2-9]
	\arrow[from=2-9, to=2-10]
\end{tikzcd}\end{equation*}
The partial-multiplicity matrices on the left are
\begin{equation*}
X_1=\begin{bmatrix}
    x_{11}^{(1)}&0\\[2pt]
    0&x_{22}^{(1)}
\end{bmatrix},\quad
X_2=\begin{bmatrix}
    x_{11}^{(2)}&0\\[2pt]
    x_{21}^{(2)}&x_{22}^{(2)}
\end{bmatrix},\quad\text{and}\quad
X_3=\begin{bmatrix}
    x_{11}^{(3)}&0\\[2pt]
    0&x_{22}^{(3)}
\end{bmatrix}.
\end{equation*}
As expected, the new partial-multiplicity matrix on the right is
\begin{equation*}
\begin{bmatrix}
    x_{11}^{(1)}x_{11}^{(2)}x_{11}^{(3)}&0\\[3pt]
    x_{11}^{(1)}x_{21}^{(2)}x_{22}^{(3)}&x_{22}^{(1)}x_{22}^{(2)}x_{22}^{(3)}
\end{bmatrix}=X_3X_2X_1.
\end{equation*}
\end{example}

\section{Rapidly growing AF algebras}\label{Section:Rapidly}
In this section, we introduce novel families of AF algebras with appealing combinatorial and probabilistic properties.
We begin in Subsection \ref{Subsection:Setup} with the definition of these ``rapidly growing AF algebras.''
In Subsection \ref{Subsection:ProbabilityMeasure}, we introduce a probability measure on certain ensembles of rapidly growing AF algebras.  Then in Subsection \ref{Subsection:Stats} we examine the asymptotic statistical properties of these algebras. We demonstrate these ideas with several illustrative examples in Subsection \ref{Subsection:Examples}.

\subsection{Setup}\label{Subsection:Setup}
Let $k \geq 2$ and let $1 \leq n_1 \leq n_2 \leq \cdots \leq n_k$ be integers such that $\gcd(n_1,n_2,\ldots,n_k) = d$.
We refer to the (column) vector $\vec{n} = [n_i]\in \Z_{\geq 1}^k$ as a \emph{generating vector} and its entries as \emph{generators}.  Then $S = \semigroup{n_1,n_2,\ldots,n_k}$ is an additive semigroup in $\Z_{\geq 0}$. It is not necessarily numerical or minimally presented. 

Let $1=g_1 < g_2 < \cdots$ be an increasing sequence of integers such that each $g_r$ divides $g_{r+1}$ and such that the positive integer sequence $\gamma_r = g_{r+1} / g_r$ satisfies
\begin{equation}\label{eq:Rapid}
\sum_{r=1}^{\infty} \frac{1}{\gamma_r^k} < \infty.
\end{equation}
For example,  $g_r = r!$ and $g_r=2^{(r-1)^2}$ are admissible since
$\gamma_r = r+1$ and $\gamma_{r} = 2^{2r-1}$, respectively (recall that $k \geq 2$).
Since $\gamma_r$ is a sequence of positive integers, \eqref{eq:Rapid} holds whenever
$\gamma_r$ is strictly increasing.  For example, this occurs if $g_r$ is strictly logarithmically convex; that is, if $g_{r+1} g_{r-1} > g_r^2$.

Define 
\begin{equation*}
\A_r = \bigoplus_{i=1}^k \M_{g_r n_i}.
\end{equation*}
For each $r\geq 1$, the set $\hom(\A_r,\A_{r+1})$ of unital *-homomorphisms from $\A_r$ into $\A_{r+1}$
is nonempty since $g_r n_i \mid g_{r+1} n_i$ for each $1 \leq i \leq k$. 
Thus, the finite set $\shom(\A_r,\A_{r+1})$ of standard unital $*$-homomorphisms is nonempty.

Each $\phi_r \in  \shom(\A_r, \A_{r+1})$ corresponds to a partial-multiplicity matrix 
\begin{equation*}
X_r = \big[ x_{ij}^{(r)} \big]_{i,j=1}^k \in \M_k(\Z_{\geq 0})
\end{equation*}
such that $X_r (g_r \vec{n}) = g_{r+1} \vec{n}$; that is, 
\begin{equation}\label{eq:Xngn}
X_r \vec{n} = \gamma_r \vec{n}.
\end{equation}
This is equivalent to the $k \times k$ linear system 
\begin{equation}\label{eq:ADN}
\begin{matrix}
x^{(r)}_{11} n_1 &+& \cdots &+& x^{(r)}_{1k} n_k &=&  \gamma_r n_1 ,\\[3pt]
x^{(r)}_{21} n_1 &+& \cdots &+& x^{(r)}_{2k} n_k &=& \gamma_r n_2 ,\\[3pt]
\vdots & \vdots & \ddots & \vdots & \vdots & \vdots & \vdots\\
x^{(r)}_{k1} n_1 &+& \cdots &+& x^{(r)}_{kk} n_k &=&  \gamma_r n_k ,
\end{matrix}
\end{equation}
which describes certain factorizations of each $\gamma_r n_i$ in $S$.

When we speak of the \emph{$r$th level},
we refer to a standard unital *-homomorphism $\phi_r:\A_r\to\A_{r+1}$ and the associated partial-multiplicity matrix $X_r$. 
If $r$ is clear from context, we drop the subscript on $X_r$ and the superscripts on its entries.
We may partition $X$ according to its columns $\vec{c}_j \in \Z_{\geq 0}^k$ or its rows
$\vec{r}_i  \in \Z_{\geq 0}^k$.  That is,
\begin{equation}\label{eq:RowsColumns}
    X = [ \vec{c}_1~\vec{c}_2~\cdots~ \vec{c}_k] 
    = 
    \begin{bmatrix}
    \vec{r}_1^{\top}\\ \vdots \\ \vec{r}_k^{\top}    
    \end{bmatrix}
    \in \M_k(\Z_{\geq 0})    .
\end{equation}
The equations \eqref{eq:ADN} assume the form $\vec{r}_i\cdot \vec{n} = \gamma_r n_i$ for $1 \leq i \leq k$ and 
$x_{ij} = \vec{e}_i\cdot \vec{c}_j = \vec{r}_i\cdot  \vec{e}_j$, in which
$\vec{e}_1, \vec{e}_2,\ldots, \vec{e}_k$ are the standard basis vectors for $\R^k$.

We identify the elements $(\phi_1,\phi_2,\ldots)$ of the Cartesian product
\begin{equation}\label{eq:EngDef}
    \E(\vec{n}, \vec{g}) = \prod_{r=1}^{\infty} \shom(\A_r,\A_{r+1})
\end{equation}
with the corresponding direct sequence (called a \emph{chain} for short)
\begin{equation}\label{eq:DirectSequenceRapid}
    \A_1 \overset{\phi_1}{\longrightarrow}
\A_2 \overset{\phi_2}{\longrightarrow}
\A_3 \overset{\phi_3}{\longrightarrow}\cdots.
\end{equation}
As an abuse of language, we may identify the \emph{rapidly growing} AF algebra 
\begin{equation}\label{eq:RapidLimit}
\A = \overline{\bigcup_{n=1}^{\infty}\phi^r(\A_r)}
\end{equation}
with these data.  Note that two chains may be distinct even if the resulting AF algebras are isomorphic.
Since we plan to treat $\eng$ as a probability space (Subsection \ref{Subsection:ProbabilityMeasure}),
we refer to it as an \emph{ensemble}.  
We sometimes regard each such $\A$ as the direct sequence \eqref{eq:DirectSequenceRapid} or its associated Bratteli diagram, which we denote by $B(\A)$. We may also let $\A$ denote the associated direct limit \eqref{eq:RapidLimit}.

\begin{example}\label{Example:TwoGenerator1}
Fix $\vec{n} = \twovector{n_1}{n_2}$ and let $g_r = r!$.
Then $\A_r = \M_{ n_1 r!} \oplus \M_{n_2 r!}$.
As $r \to \infty$, there are many unital embeddings $\phi_r: \A_r \to \A_{r+1}$; see Figure \ref{Figure:TwoGenerator}.
In particular, $\eng$ contains infinitely many chains.  Do infinitely many non-isomorphic AF algebras arise in this manner?
What is the probability an AF algebra drawn from $\eng$ is simple?  What is the asymptotic expected number of matrix algebras in $\phi_r(\A_r)$?  These are the type of questions we hope to answer in what follows.
\end{example}

\begin{figure}
    \begin{subfigure}{1\textwidth}\label{subfigure:nonsimple}
    \centering
\[\begin{tikzcd}
	2 & 4 & 12 & 48 & 240 & 1440  \cdots\\
	3 & 6 & 18 & 72 & 360 & 2160  \cdots
	\arrow["2", from=1-1, to=1-2]
	\arrow["3", from=1-2, to=1-3]
	\arrow["4", from=1-3, to=1-4]
	\arrow["5", from=1-4, to=1-5]
	\arrow["6", from=1-5, to=1-6]
	\arrow["2", from=2-1, to=2-2]
	\arrow["3", from=2-2, to=2-3]
	\arrow["4", from=2-3, to=2-4]
	\arrow["5", from=2-4, to=2-5]
	\arrow["6", from=2-5, to=2-6]
\end{tikzcd}\]
\caption{Partial Bratteli diagram for one possible $\mathcal A\in \eng$.}
\end{subfigure}
\begin{subfigure}{1\textwidth}\label{subfigure:simple}
    \centering
\[\begin{tikzcd}
	2 & 4 & 12 & 48 & 240 & 1440 \cdots\\
	3 & 6 & 18 & 72 & 360 & 2160 \cdots
	\arrow["2", from=1-1, to=1-2]
	\arrow["3", from=1-2, to=1-3]
	\arrow["3"{description, pos=0.6}, curve={height=6pt}, from=1-2, to=2-3]
	\arrow["4", from=1-3, to=1-4]
	\arrow["3"{description, pos=0.6}, curve={height=6pt}, from=1-3, to=2-4]
	\arrow["2", from=1-4, to=1-5]
	\arrow["3"{description, pos=0.6}, curve={height=6pt}, from=1-4, to=2-5]
	\arrow["3", from=1-5, to=1-6]
	\arrow["3"{description, pos=0.6}, curve={height=6pt}, from=1-5, to=2-6]
	\arrow["2"', from=2-1, to=2-2]
	\arrow["1"', from=2-2, to=2-3]
	\arrow["2"', from=2-3, to=2-4]
	\arrow["2"{description, pos=0.6}, curve={height=-6pt}, from=2-4, to=1-5]
	\arrow["3"', from=2-4, to=2-5]
	\arrow["2"{description, pos=0.6}, curve={height=-6pt}, from=2-5, to=1-6]
	\arrow["4"', from=2-5, to=2-6]
\end{tikzcd}\]
\caption{Partial Bratteli diagram for another possible $\mathcal A'\in \eng$.}
\end{subfigure}
\caption{Partial Bratteli diagrams for two possible AF algebras coming from $\eng$, in which $\vec{n} = \twovector{2}{3}$ and $g_r=r!$. Arrow multiplicities are denoted by numbers above each arrow, rather than with multiple arrows. As $r$ increases, the number of possible unital embeddings $\phi_r:\A_{r}\to\A_{r+1}$ increases, hence so does the potential for different algebraic structure. Although $\A$ and $\A'$ belong to the same ensemble, their structure is quite different; $\A$ has two ideals, while $\A'$ is simple. }
\end{figure}\label{Figure:TwoGenerator}

\begin{remark}
    Many standard examples of AF algebras do not fit into the present framework.
    This is a positive: rapidly growing AF algebras provide new territory to explore.
    First note that the restriction $k\geq 2$ provides us with a certain amount of combinatorial freedom.     
    In contrast, the \emph{CAR algebra} is obtained from the direct sequence $\phi_r:\M_{2^r}\to\M_{2^{r+1}}$ defined by 
    $\phi_r(A) = A \oplus A$ \cite[Ex.~III.2.4]{davidson}.  Since
    there is only one choice of unital embedding possible at each level, there is nothing to analyze from a probabilistic perspective (moreover, $g_r = 2^r$ does not satisfy \eqref{eq:Rapid}).
    As another example, recall that a C*-algebra is \emph{uniformly hyperfinite} (UHF) if it is the increasing union of unital subalgebras isomorphic to full matrix algebras $\M_{n_r}$ \cite[Ex.~III.5.1]{davidson}.  Each UHF algebra is characterized by an increasing divisibility sequence.  Since
    there is only one standard unital embedding possible at each level, there are no statistical phenomena to study.  
    Similarly, the Effros--Shen algebras and other standard examples do not fit readily into our framework.   
\end{remark}

\subsection{A probability measure on $\eng$}\label{Subsection:ProbabilityMeasure}
The following probabilistic construction is standard; see \cite[Prop.~10.6.1]{CohnMeasure} or \cite[Thm.~A, p.~212]{HalmosMeasure}.
Each set $\E_r = \shom(\A_r,\A_{r+1})$ of standard unital *-homomorphisms is finite and nonempty, so we may endow it with the uniform probability measure $\P_r$, which is defined on the powerset $\mathscr{B}_r$ of $\E_r$.  Thus, we have a sequence of probability spaces $(\E_r, \mathscr{B}_r, \P_r)$.
Recall from \eqref{eq:EngDef} that $\eng = \prod_{r=1}^{\infty} \E_r$ and define
\begin{equation*}
    \mathscr{B} = \Big\{ \prod_{r=1}^{\infty} \mathcal{C}_r : \text{$\mathcal{C}_r \in \mathscr{B}_r$ for all $r$ and $\mathcal{C}_r = \E_r$ for all but finitely many $r$}\Big\}.
\end{equation*}
Then there is a unique probability measure $\P$ on $(\eng,\mathscr{B})$ such that
\begin{equation}\label{eq:ProbabilityProduct}
    \P(\prod_{r=1}^{\infty} \mathcal{C}_r) 
    = \prod_{r=1}^{\infty} \P_r(\mathcal{C}_r)
\end{equation}
for all $\prod_{r=1}^{\infty} \mathcal{C}_r \in \mathscr{B}$.  Let $\pi_r:\eng\to\E_r$
denote the $r$th coordinate function.  Then the random variables $\pi_r$ are independent and they have the distributions $\P_r$ on each measurable space $(\E_r, \mathscr{B}_r)$, respectively.

Thus, there is a natural probability measure on $\eng$ that satisfies \eqref{eq:ProbabilityProduct}.  This permits us to discuss properties that hold for almost every chain, for example.
We often appeal to the (first) Borel--Cantelli lemma:  if $E_1,E_2,\ldots$ are events such that $\sum_{r=1}^{\infty} \P(E_r) < \infty$, then the probability that infinitely many of the $E_r$ occur is $0$.  

\subsection{Asymptotic statistics}\label{Subsection:Stats}
The asymptotic statistical properties (as $r\to\infty$) of $\shom(\A_r,\A_{r+1})$ can be described using results about numerical semigroups \cite{factorization2, factorization5}. First we consider the cardinality of $\shom(\A_r,\A_{r+1})$.
Let $d=\gcd(n_1,n_2,\ldots,n_k)$.

\begin{proposition}\label{Proposition:HomTotal}
    $\displaystyle|\shom(\A_r,\A_{r+1})| \sim
    \frac{d^k \gamma_r^{k(k-1)}}{[(k-1)!]^k(n_1n_2\cdots n_k)}$ as $r \to \infty$.
\end{proposition}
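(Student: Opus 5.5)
The plan is to reduce the count to a product of Sylvester denumerants and then apply the asymptotic formula \eqref{eq:Cardinal-with-d} from Remark~\ref{Remark:GCD}. By Subsection~\ref{Subsection:AF}, each $\phi_r\in\shom(\A_r,\A_{r+1})$ is uniquely determined by its partial-multiplicity matrix $X_r\in\M_k(\Z_{\geq 0})$. Conversely, any such matrix satisfying \eqref{eq:Xngn} yields a standard unital $*$-homomorphism. So $|\shom(\A_r,\A_{r+1})|$ equals the number of solutions $X$ of the system \eqref{eq:ADN}.

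Writing $X$ by rows as in \eqref{eq:RowsColumns}, the system \eqref{eq:ADN} decouples into the $k$ independent conditions $\vec{r}_i\cdot\vec{n}=\gamma_r n_i$, one for each $1\leq i\leq k$. The $i$th condition says exactly that $\vec{r}_i\in\ZZ_S(\gamma_r n_i)$, where colored (repeated) generators are counted as distinct. Hence
\begin{equation*}
|\shom(\A_r,\A_{r+1})| = \prod_{i=1}^k |\ZZ_S(\gamma_r n_i)|.
\end{equation*}

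Next I need $\gamma_r\to\infty$. This follows from \eqref{eq:Rapid}: the terms $1/\gamma_r^k$ of a convergent series tend to $0$. Alternatively, it follows directly from $g_r$ being strictly increasing, which gives $\gamma_r\geq 2$, although that alone does not give divergence to infinity; so I rely on \eqref{eq:Rapid}. Since $d\mid n_i$, each $\gamma_r n_i$ is divisible by $d$, so \eqref{eq:Cardinal-with-d} applies along the sequence $\gamma_r n_i\to\infty$ and gives
\begin{equation*}
|\ZZ_S(\gamma_r n_i)| \sim \frac{d\,(\gamma_r n_i)^{k-1}}{(k-1)!\,(n_1 n_2\cdots n_k)}.
\end{equation*}
If generators are repeated, I justify this as in Remark~\ref{Remark:AllSame}. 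That is, the denumerant asymptotic \eqref{eq:Cardinal} holds with colored generators; it is a statement about the rational generating function $\prod_i (1-z^{n_i})^{-1}$, whose dominant pole at $z=1$ has order $k$ regardless of repetitions.

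Finally, I multiply the $k$ asymptotic equivalences; a finite product of asymptotic equivalences is again one. The numerators contribute $d^k\gamma_r^{k(k-1)}(n_1\cdots n_k)^{k-1}$ and the denominators contribute $[(k-1)!]^k(n_1\cdots n_k)^k$. After cancellation this gives exactly the stated formula. The only point needing care, rather than real difficulty, is confirming that \eqref{eq:Cardinal} and \eqref{eq:Cardinal-with-d} are valid for non-minimal generating sets with repeated entries. I would handle this by the pole-order argument for the generating function, or by citing the sources for \eqref{eq:Cardinal}.
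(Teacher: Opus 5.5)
Your proposal is correct and matches the paper's proof: you reduce $|\shom(\A_r,\A_{r+1})|$ to $\prod_{i=1}^k|\ZZ_S(\gamma_r n_i)|$ by splitting $X$ into its rows, apply \eqref{eq:Cardinal-with-d} to each factor, and multiply. Your additional checks make explicit what the paper leaves implicit: that $\gamma_r\to\infty$ by \eqref{eq:Rapid}, that $d\mid\gamma_r n_i$, and that repeated generators cause no trouble.
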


\begin{proof} 
Recall that $d=\gcd(n_1,\dots,n_k)$. Use the notation \eqref{eq:RowsColumns} and observe that
\begin{align*}
|\shom(\A_r,\A_{r+1}) |
&= | \{ X \in \M_k(\Z_{\geq 0}) : X \vec{n} = \gamma_r \vec{n} \} | && \text{by \eqref{eq:Xngn}}\nonumber\\
&=\prod_{i=1}^k |\{ \vec{r}_i \in \Z_{\geq 0}^k : \vec{r}_i\cdot   \vec{n}  = \gamma_r n_i \} |  \nonumber&\\
&= \prod_{i=1}^k |\ZZ_S(\gamma_r n_i) | & \\
&\sim\prod_{i=1}^k \frac{ d(\gamma_r n_i)^{k-1}}{(k-1)!n_1n_2\dots n_k}  && \text{by \eqref{eq:Cardinal-with-d}}\nonumber\\
&= \frac{ d^k\gamma_r^{ k(k-1)} (n_1 n_2 \cdots n_k)^{k-1}}{[(k-1)!]^k (n_1 n_2 \cdots n_k)^k} \nonumber\\
&= \frac{ d^k\gamma_r^{ k(k-1)} }{[(k-1)!]^k (n_1 n_2 \cdots n_k) }.\label{eq:HowManyHoms}\qedhere
\end{align*}
\end{proof}

An important intermediate step in the previous proof is the equality
\begin{equation}\label{eq:HomZZ}
    |\shom(\A_r,\A_{r+1}) |
= \prod_{i=1}^k |\ZZ_S(\gamma_r n_i) |.
\end{equation}
This formula arises frequently in what follows.

The number of times $\phi_r\in \shom(\A_r,\A_{r+1})$ maps
$\M_{g_rn_j}$ into $\M_{g_{r+1}n_i}$ is $x_{ij}$, the $(i,j)$ entry of the associated partial-multiplicity matrix $X = [x_{ij}]\in \M_k(\Z_{\geq 0})$.  Each such matrix satisfies $X\n=\gamma_r\n$ by \eqref{eq:Xngn}.
For each $p \in \Z_{\geq 0}$, the $p$th moment of $x_{ij}$ is
\begin{equation*}
    \mu_p(x_{ij}) = \frac{1}{|\shom(\A_r,\A_{r+1})|}\sum_{X\n=\gamma_r\n} x_{ij}^p.
\end{equation*}
Do not confuse the $p$th power $x_{ij}^p$ of $x_{ij}$ with the notation $x_{ij}^{(r)}$ for the $(i,j)$ entry
of the matrix $X_r$; recall that the level $r$ is often suppressed.

We have the following expression for the asymptotic moments.

\begin{proposition}\label{Proposition:Moments}
$\displaystyle \mu_p(x_{ij}) \sim \bigg(\gamma_r\frac{n_i}{n_j}\bigg)^p \binom{p+k-1}{p}^{-1}$ as $r\to\infty$.
\end{proposition}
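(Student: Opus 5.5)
Because the rows of $X$ are chosen independently, the plan is to reduce the statement to a single-row computation and then apply Corollary~\ref{Corollary:SemigroupWeak} with $\vec{m}=\vec{e}_j$.

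\emph{Reduction to one row.} By \eqref{eq:HomZZ}, the set $\{X : X\n=\gamma_r\n\}$ is the Cartesian product of the sets $\ZZ_S(\gamma_r n_\ell)$ over the rows $\ell=1,\ldots,k$. The quantity $x_{ij}=\vec{r}_i\cdot\vec{e}_j$ depends only on row $i$. Hence the factors for $\ell\neq i$ cancel, and
\begin{equation*}
\mu_p(x_{ij})=\frac{1}{|\ZZ_S(\gamma_r n_i)|}\sum_{\vec{x}\in\ZZ_S(\gamma_r n_i)} (\vec{e}_j\cdot\vec{x})^p .
\end{equation*}
Now put $N=\gamma_r n_i$. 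Since $d\mid n_i$, we have $d\mid N$, and $N\to\infty$ because $\gamma_r\to\infty$; the latter follows from \eqref{eq:Rapid}, since the $\gamma_r$ are integers. Write $(\vec{e}_j\cdot\vec{x})^p=N^p f(\vec{e}_j\cdot\vec{x}/N)$ with $f(t)=t^p$. The argument $\vec{e}_j\cdot\vec{x}/N$ always lies in the compact interval $[0,1/n_j]$, so $f$ may be replaced by a bounded continuous function agreeing with $t^p$ there.

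\emph{Applying the corollary.} Apply Corollary~\ref{Corollary:SemigroupWeak} with $\vec{m}=\vec{e}_j$ and $I=\R$. This is legitimate provided $\vec{e}_j$ and $\n$ are linearly independent, which holds since $k\geq 2$ and all $n_\ell\geq1$. The knots are $m_\ell d/n_\ell$, that is, $d/n_j$ once and $0$ repeated $k-1$ times. By \eqref{eq:SplineSame} with $a=0$ and $b=d/n_j$, the limiting density is $(k-1)(b-t)^{k-2}/b^{k-1}$ on $[0,b]$. Therefore
\begin{equation*}
\mu_p(x_{ij})\sim N^p\int_0^{b} (t/d)^p\,\frac{(k-1)(b-t)^{k-2}}{b^{k-1}}\,dt
= N^p\Big(\frac{1}{n_j}\Big)^p (k-1)\int_0^1 s^p(1-s)^{k-2}\,ds ,
\end{equation*}
where the second form comes from substituting $t=bs$.

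\emph{The Beta integral.} The integral is $B(p+1,k-1)=p!\,(k-2)!/(p+k-1)!$, so
\begin{equation*}
(k-1)\,B(p+1,k-1)=\frac{p!\,(k-1)!}{(p+k-1)!}=\binom{p+k-1}{p}^{-1}.
\end{equation*}
Substituting $N^p/n_j^p=(\gamma_r n_i/n_j)^p$ then gives the claimed asymptotic.

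\emph{Main obstacle.} The delicate point is the limit passage. The case $p=0$ is trivial. For $p\geq1$ the limit integral is positive, so "$\sim$" in the corollary is a genuine ratio statement. The corollary is stated along $n\to\infty$ with $d\mid n$, and our $N=\gamma_r n_i$ is such a subsequence, so the statement applies directly. One should also double-check the degenerate situation $n_j=n_\ell$ for some $\ell\neq j$ (repeated colored generators). This causes no problem, since independence of $\vec{e}_j$ and $\n$ is all that is needed.
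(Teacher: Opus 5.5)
Your proof is correct and follows the paper's argument essentially step for step: you reduce to the single row $i$ via \eqref{eq:HomZZ}, apply Corollary~\ref{Corollary:SemigroupWeak} with $\vec{m}=\vec{e}_j$ and $f(t)=t^p$, identify the limiting density as the spline with $k-1$ knots at $0$ and one knot at $d/n_j$, and evaluate the resulting Beta integral. Your additional checks fill in details the paper leaves implicit: replacing $t^p$ by a bounded continuous function that agrees with it on $[0,1/n_j]$, justifying $\gamma_r\to\infty$, and noting that $d\mid\gamma_r n_i$.
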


\begin{proof}
    First note that \eqref{eq:SplineSame} says that
    \begin{equation}\label{eq:LotsZeros}
    M\big(t;\underbrace{0,0,\ldots,0,\tfrac{d}{n_j},0,\ldots,0}_{\text{$\frac{d}{n_j}$ in $j$th position}}\big)
    =
    \begin{cases}
    (k-1)\big(\frac{n_j}{d}\big)^{k-1}\big( \frac{d}{n_j}-t\big)^{k-2}& \text{if $0\leq t \leq \frac{d}{n_j}$},\\
    0 &\text{otherwise}.
    \end{cases}
    \end{equation}
    Corollary \ref{Corollary:SemigroupWeak} with $f(t) = t^p$ yields
    \begin{align*}
        \frac{\sum_{X\n=\gamma_r\n}x_{ij}^p}{ | \shom(\A_r,\A_{r+1}) | }
        &=\frac{\prod_{h:h\neq i}|\ZZ_S(\gamma_rn_h)|}{\prod_{h=1}^k |\ZZ_S(\gamma_rn_h)|}
        \cdot
        \sum_{\vec{r}\in \ZZ_S(\gamma_rn_i)} (\vec{r}\cdot \vec{e}_j)^p
        &&\text{by \eqref{eq:HomZZ}}\\
        &=(\gamma_r n_i)^p \bigg( \frac{1}{|\ZZ_S(\gamma_rn_i)|}\cdot
        \sum_{\vec{r}\in \ZZ_S(\gamma_rn_i)} \Big(\frac{ \vec{r}\cdot \vec{e}_j }{ \gamma_r n_i}\Big)^p \bigg)\\
        &\sim (\gamma_r n_i)^p \int_{-\infty}^{\infty} (t/d)^p\, M\big(t;0,0,\ldots,0,\tfrac{d}{n_j},0,\ldots,0\big)  \,dt&&\text{by Cor~\ref{Corollary:SemigroupWeak}}\\
        &=\bigg(\frac{\gamma_r n_i}{d} \bigg)^p (k-1) \bigg(\frac{n_j}{d}\bigg)^{k-1}\int_{0}^{d/n_j} t^p\big(\tfrac{d}{n_j}-t\big)^{k-2}dt && \text{by \eqref{eq:LotsZeros}}\\
        &=\bigg(\frac{\gamma_r n_i}{d}\bigg)^p (k-1)\bigg(\frac{n_j}{d}\bigg)^{k-1}\bigg(\frac{d}{n_j}\bigg)^{k+p-1} \!\! \frac{(k-2)!p!}{(p+k-1)!}\\
        &=\bigg(\gamma_r\frac{n_i}{n_j}\bigg)^p  \frac{(k-1)!p!}{(p+k-1)!}\\
        &= \bigg(\gamma_r\frac{n_i}{n_j}\bigg)^p \binom{k+p-1}{p}^{-1}.\qedhere
    \end{align*}
\end{proof}

\begin{corollary}\label{Corollary:Mean}
The mean number of times $\M_{g_rn_j}$ is mapped into $\M_{g_{r+1}n_i}$ satisfies
\begin{equation*}
    \mu_1(x_{ij}) \sim \frac{\gamma_r}{k}\cdot \frac{n_i}{n_j}.
\end{equation*}
\end{corollary}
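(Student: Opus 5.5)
This is the case $p=1$ of Proposition~\ref{Proposition:Moments}, so the plan is simply to specialize that result and simplify the constant. Set $p=1$ in
\begin{equation*}
\mu_p(x_{ij}) \sim \bigg(\gamma_r\frac{n_i}{n_j}\bigg)^p \binom{p+k-1}{p}^{-1}.
\end{equation*}
The binomial coefficient becomes $\binom{k}{1}=k$, and this immediately gives $\mu_1(x_{ij}) \sim \frac{\gamma_r}{k}\cdot\frac{n_i}{n_j}$. By definition, $\mu_1(x_{ij})$ is the average of $x_{ij}$ over all $X$ with $X\n=\gamma_r\n$. Under the identification of $\shom(\A_r,\A_{r+1})$ with these partial-multiplicity matrices, it is exactly the mean number of times $\M_{g_rn_j}$ is mapped into $\M_{g_{r+1}n_i}$.

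As a sanity check, we can also argue directly, which is essentially how the proposition was proved. By \eqref{eq:HomZZ}, the average of $x_{ij}$ reduces to the average of $\vec{r}\cdot\vec{e}_j$ over $\vec{r}\in\ZZ_S(\gamma_rn_i)$, because the other rows contribute factors that cancel. Corollary~\ref{Corollary:SemigroupWeak} applies with $\vec{m}=\vec{e}_j$, which is linearly independent of $\n$ since $k\geq 2$ and all $n_h>0$. With $f(t)=t$, it gives the mean of the density \eqref{eq:LotsZeros}. That density is a $\mathrm{Beta}(1,k-1)$ law scaled to $[0,d/n_j]$, so its mean is $\frac{d}{kn_j}$. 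Multiplying by $\gamma_rn_i/d$ recovers $\frac{\gamma_r n_i}{kn_j}$.

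There is a consistency check as well. Summing the exact identity $\sum_j x_{ij}n_j=\gamma_rn_i$ over $j$ after averaging gives $\sum_j \mu_1(x_{ij}) n_j = \gamma_r n_i$. The asymptotic formula matches this, since $\sum_{j=1}^k \frac{\gamma_r n_i}{k}=\gamma_r n_i$.

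The only potential obstacle is the use of $f(t)=t$, which is unbounded, in a theorem stated for bounded continuous $f$. This is harmless because all the measures involved are supported in a fixed compact interval (Remark~\ref{Remark:LocalLimit}). One may therefore replace $f$ by a bounded continuous function that agrees with $t$ on that interval; Proposition~\ref{Proposition:Moments} already relies on this tacitly.
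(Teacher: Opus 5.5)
Your proposal is correct and matches the paper, which records this corollary without a separate proof as the $p=1$ case of Proposition~\ref{Proposition:Moments}, where $\binom{k}{1}=k$. Your direct spline computation, the row-sum consistency check, and the remark about replacing the unbounded $f(t)=t$ by a bounded function on the compact support are all sound, but nothing beyond that specialization is required.
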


To go further, we need the next lemma.

\begin{lemma}\label{Lemma:AsympototicSum}
    If $f_i(r)\sim g_i(r)$ as $r\to \infty$ for $1 \leq i\leq k$ and if for each $(i,j)$, the quotient $g_i(r)/g_{j}(r)$ is nonzero and constant with respect to $r$, then $\sum_{i=1}^k f_i(r)\sim\sum_{i=1}^k g_i(r)$.
\end{lemma}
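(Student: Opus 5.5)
The plan is to use the constancy of the ratios to write every $g_i$ as a fixed nonzero multiple of $g_1$. Then both sums become a common factor times something that tends to a fixed nonzero constant.

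Set $c_i = g_i(r)/g_1(r)$. By hypothesis this is a nonzero constant independent of $r$, with $c_1 = 1$. Then
\begin{equation*}
\sum_{i=1}^k g_i(r) = g_1(r)\,C, \qquad C = \sum_{i=1}^k c_i .
\end{equation*}
Next I divide the numerator termwise:
\begin{equation*}
\frac{\sum_{i=1}^k f_i(r)}{\sum_{i=1}^k g_i(r)} = \frac{1}{C}\sum_{i=1}^k \frac{f_i(r)}{g_i(r)}\cdot\frac{g_i(r)}{g_1(r)} = \frac{1}{C}\sum_{i=1}^k c_i\,\frac{f_i(r)}{g_i(r)} .
\end{equation*}
Each quotient $f_i(r)/g_i(r)$ tends to $1$, since $f_i \sim g_i$ and $g_i(r)\neq 0$ for large $r$. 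This is a finite sum, so the right side converges to $\frac{1}{C}\sum_{i=1}^k c_i = 1$, which is the claim.

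The only real obstacle is that $C$ must be nonzero; otherwise the denominator $\sum_i g_i$ vanishes and $\sim$ is meaningless. As stated, the hypothesis does not force $C\neq 0$. For example, $g_1 = 1$ and $g_2 = -1$ have constant nonzero ratio, yet $f_1 = 1 + 1/r$ and $f_2 = -1$ give $\sum f_i \not\sim \sum g_i$.

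I would handle this by noting that the implicit assumption in the definition of $\sim$ (nonvanishing denominators) already requires $\sum_i g_i \neq 0$ for large $r$, which forces $C \neq 0$. Alternatively, I would observe that in all intended applications the $g_i$ are positive, as with $\gamma_r n_i/n_j$ and the like, so every $c_i > 0$ and hence $C > 0$. I would state the proof under that reading, noting that positivity, or at least $C\neq 0$, is what makes the argument go through.
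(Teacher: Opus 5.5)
Your argument is correct, and it differs from the paper's mainly in organization, though the difference has a real payoff. The paper inducts on $k$. Setting $c=\sum_{i<k}g_i/g_k$, it writes $\frac{\sum_{i\leq k}f_i}{\sum_{i\leq k}g_i}=\frac{c}{1+c}\cdot\frac{\sum_{i<k}f_i}{\sum_{i<k}g_i}+\frac{1}{1+c}\cdot\frac{f_k}{g_k}$ and applies the inductive hypothesis, peeling off one term at a time. You instead write the ratio in one step as the fixed weighted average $\sum_i (c_i/C)(f_i/g_i)$, whose weights sum to $1$. The underlying identity is the same. However, your version needs only $C\neq 0$, i.e., that $\sum_i g_i$ is eventually nonzero, which is exactly what is required for the conclusion to make sense. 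The paper's induction, as written, silently needs more: every partial sum $g_1+\cdots+g_j$ must be nonzero, because it divides by $c$ and applies the hypothesis to the first $k-1$ terms, and it also needs $c\neq-1$. Your counterexample $g_1=1$, $g_2=-1$ correctly shows that some such nondegeneracy must be assumed. One correction to your final paragraph: not every application in the paper has positive $g_i$. In Proposition~\ref{Proposition:StDev} the lemma is applied to $\mu_2(x_{ij})-\mu_1^2(x_{ij})$, where one term is asymptotic to $-\frac{1}{k^2}\big(\gamma_r\frac{n_i}{n_j}\big)^2<0$. There $\frac{2}{k(k+1)}-\frac{1}{k^2}=\frac{k-1}{k^2(k+1)}\neq 0$ for $k\geq 2$, so it is your $C\neq 0$ formulation, not positivity, that covers this use.
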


\begin{proof}
We proceed by induction on $k$.  The base case $k=1$ is immediate.
Suppose that $\sum_{i=1}^{k-1}f_i(r)\sim\sum_{i=1}^{k-1}g_i(r)$.
For clarity, we drop the argument $r$ in what follows.  The hypotheses ensure that $(\sum_{i=1}^{k-1}g_i)/g_k$
is constant, call it $c$.  Then
\begin{align*}
    \frac{\sum_{i=1}^k f_i}{\sum_{i=1}^k g_i}
    &=\bigg[\frac{\sum_{i=1}^{k-1}f_i}{\sum_{i=1}^{k-1}g_i}\bigg(\frac{1}{1+ g_k/(\sum_{i=1}^{k-1}g_i)}\bigg) + \frac{f_k}{g_k}\bigg(\frac{1}{1+ (\sum_{i=1}^{k-1}g_i)/g_k}\bigg)\bigg]\\
    &=\left(\frac{1}{1+\frac{1}{c}}\right) \frac{\sum_{i=1}^{k-1}f_i}{\sum_{i=1}^{k-1}g_i}+\left(\frac{1}{1+c}\right) \frac{f_k}{g_k}
    \sim\frac{1}{1+\frac{1}{c}}+\frac{1}{1+c}=1.\qedhere
\end{align*}
\end{proof}

\begin{proposition}\label{prop:avg-nb-in-image}
    As $r\to\infty$, the mean number of times $\mathcal{M}_{g_rn_j}$ appears in $\phi_r(\A_r)$ is
    \begin{align}\label{eq:avg-nb-in-image}
        \sum_{i=1}^k \mu_1(x_{ij}) \sim
        \frac{\gamma_r}{n_j} \bigg(\frac{1}{k}\sum_{i=1}^kn_i \bigg).
    \end{align}
\end{proposition}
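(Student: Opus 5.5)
The plan is to combine Corollary \ref{Corollary:Mean} with Lemma \ref{Lemma:AsympototicSum}. First we identify the quantity. By the discussion in Subsection \ref{Subsection:AF}, the number of times $\M_{g_rn_j}$ appears in $\phi_r(\A_r)$ is the $j$th column sum $\sum_{i=1}^k x_{ij}$ of the partial-multiplicity matrix $X_r$. Its mean over $\shom(\A_r,\A_{r+1})$ with the uniform measure is, by linearity of expectation, $\sum_{i=1}^k \mu_1(x_{ij})$.

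Next, Corollary \ref{Corollary:Mean} gives, for each fixed $i$ and $j$, $f_i(r) := \mu_1(x_{ij}) \sim g_i(r) := \frac{\gamma_r}{k}\cdot\frac{n_i}{n_j}$ as $r\to\infty$. The hypotheses of Lemma \ref{Lemma:AsympototicSum} hold. Each $g_i(r)$ is nonzero because $\gamma_r\geq 1$ and $n_i\geq 1$. Moreover, $g_i(r)/g_{i'}(r)=n_i/n_{i'}$ is a nonzero constant independent of $r$. The lemma therefore yields
\begin{equation*}
\sum_{i=1}^k \mu_1(x_{ij}) \sim \sum_{i=1}^k \frac{\gamma_r}{k}\cdot\frac{n_i}{n_j} = \frac{\gamma_r}{n_j}\bigg(\frac1k\sum_{i=1}^k n_i\bigg),
\end{equation*}
which is \eqref{eq:avg-nb-in-image}.

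The only point needing care is that Corollary \ref{Corollary:Mean} rests on Proposition \ref{Proposition:Moments}. That proposition invokes Corollary \ref{Corollary:SemigroupWeak} with $\vec{m}=\vec{e}_j$, which requires $\vec{e}_j$ and $\n$ to be linearly independent. Since $k\geq 2$ and every $n_i\geq 1$, $\n$ has at least two nonzero entries and so is never a multiple of $\vec{e}_j$. The proposition also requires $d\mid\gamma_r n_i$, which holds trivially because $d\mid n_i$. No genuine obstacle remains beyond checking these hypotheses. Lemma \ref{Lemma:AsympototicSum} is needed because asymptotic equivalence is not preserved under sums in general, and here it is preserved because all the summands grow at proportional rates.
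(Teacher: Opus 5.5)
Your proposal is correct and takes the same approach as the paper, whose proof is just the one-line instruction to combine Corollary~\ref{Corollary:Mean} with Lemma~\ref{Lemma:AsympototicSum}; the lemma applies because the ratios $n_i/n_{i'}$ of the asymptotic means are constant in $r$. Your explicit checks that $\vec{e}_j$ and $\n$ are linearly independent and that $d\mid\gamma_r n_i$ fill in hypotheses the paper leaves implicit.
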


\begin{proof}
    Apply Corollary~\ref{Corollary:Mean} and Lemma~\ref{Lemma:AsympototicSum}.
\end{proof}

We next compute the asymptotic standard deviation
\begin{equation*}
\sigma(x_{ij}) =\sqrt{\mu_2(x_{ij})-\mu_1^2(x_{ij})}.
\end{equation*}
The previous lemma is required since $\sim$ does not respect addition in general.

\begin{proposition}\label{Proposition:StDev}
    $\displaystyle \sigma(x_{ij}) \sim \frac{1}{k}\sqrt{ \frac{k-1}{k+1} }\bigg(\gamma_r\frac{n_i}{n_j}\bigg)$ as $r\to\infty$.
\end{proposition}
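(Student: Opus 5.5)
The plan is to read off the first two moments from Proposition~\ref{Proposition:Moments} and to take care with the subtraction, since $\sim$ does not respect differences in general. Set $a_r = \gamma_r n_i/n_j$. Proposition~\ref{Proposition:Moments} with $p=1$ and $p=2$ gives
\begin{equation*}
\mu_1(x_{ij}) \sim \frac{a_r}{k}
\qquad\text{and}\qquad
\mu_2(x_{ij}) \sim a_r^2\binom{k+1}{2}^{-1} = \frac{2a_r^2}{k(k+1)}.
\end{equation*}
Squaring the first relation, which is legitimate because $\sim$ respects products, yields $\mu_1^2(x_{ij}) \sim a_r^2/k^2$.

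The difference is the main obstacle, because the two leading terms could in principle cancel. I would handle it directly rather than through Lemma~\ref{Lemma:AsympototicSum}, which only treats sums with a constant ratio. Write
\begin{equation*}
\frac{\mu_2(x_{ij}) - \mu_1^2(x_{ij})}{a_r^2} = \frac{\mu_2(x_{ij})}{a_r^2} - \frac{\mu_1^2(x_{ij})}{a_r^2}.
\end{equation*}
The two terms on the right converge to $2/(k(k+1))$ and $1/k^2$ respectively. Hence the left side converges to
\begin{equation*}
\frac{2}{k(k+1)} - \frac{1}{k^2} = \frac{2k-(k+1)}{k^2(k+1)} = \frac{k-1}{k^2(k+1)}.
\end{equation*}
This limit is strictly positive because $k\geq 2$, so there is no cancellation at leading order. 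Therefore
\begin{equation*}
\mu_2(x_{ij})-\mu_1^2(x_{ij}) \sim \frac{k-1}{k^2(k+1)}\,a_r^2.
\end{equation*}
Equivalently, one could use the version of Lemma~\ref{Lemma:AsympototicSum} with $g_1 = 2a_r^2/(k(k+1))$ and $g_2 = -a_r^2/k^2$. Their ratio is a nonzero constant, and their sum is nonzero, which is what the proof of that lemma actually needs.

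Finally, I would take square roots. If $u_r \sim v_r$ with $u_r, v_r > 0$, then $\sqrt{u_r}\sim\sqrt{v_r}$, since $\sqrt{\cdot}$ is continuous at $1$. This gives
\begin{equation*}
\sigma(x_{ij}) \sim \frac{1}{k}\sqrt{\frac{k-1}{k+1}}\,\gamma_r\frac{n_i}{n_j},
\end{equation*}
which is the statement of Proposition~\ref{Proposition:StDev}.

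One technical point remains. The variance is positive for all large $r$, since its ratio to $a_r^2$ tends to a positive limit. So the quotients defining $\sim$ make sense eventually, and $\gamma_r\to\infty$ is not even needed beyond what Proposition~\ref{Proposition:Moments} already uses.
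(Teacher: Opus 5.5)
Your proof is correct and follows the paper's route. The paper likewise takes $p=1,2$ in Proposition~\ref{Proposition:Moments}, handles the difference $\mu_2-\mu_1^2$ via Lemma~\ref{Lemma:AsympototicSum}, and then takes square roots. Your direct limit computation is what that lemma amounts to here, and your observation that $\frac{k-1}{k^2(k+1)}>0$ supplies the nonvanishing condition $g_1+g_2\neq 0$ that the lemma's proof tacitly requires but its statement omits.
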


\begin{proof}
Lemma \ref{Lemma:AsympototicSum} ensures that the variance of $x_{ij}$ satisfies
\begin{align*}
    \mu_2(x_{ij})-\mu_1^2(x_{ij}) 
    &\sim \frac{2}{k(k+1)}\bigg(\gamma_r\frac{n_i}{n_j}\bigg)^2  - \frac{1}{k^2} \bigg(\gamma_r\frac{n_i}{n_j}\bigg)^2 \\
    &= \bigg( \frac{2}{k(k+1)} - \frac{1}{k^2}\bigg) \bigg(\gamma_r\frac{n_i}{n_j}\bigg)^2\\
    &= \frac{1}{k^2}\bigg( \frac{k-1}{k+1} \bigg)\bigg(\gamma_r\frac{n_i}{n_j}\bigg)^2
\end{align*}
as $r \to \infty$.  Take square roots and obtain the desired result.
\end{proof}

\begin{proposition}\label{Proposition:Median}   
$\displaystyle\operatorname{median} x_{ij} \sim (1-2^{\frac{1}{1-k}})\bigg(\gamma_r\frac{n_i}{n_j}\bigg)$ as $r\to\infty$.
\end{proposition}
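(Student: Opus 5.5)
The plan is to proceed exactly as in the proof of Proposition~\ref{Proposition:Moments}, but with an indicator function in place of $t^p$. By \eqref{eq:HomZZ}, the rows of $X$ are independent and uniformly distributed. Hence the distribution of $x_{ij}$ under the uniform measure on $\shom(\A_r,\A_{r+1})$ equals the distribution of $\vec{r}\cdot\vec{e}_j$ for $\vec{r}$ uniform in $\ZZ_S(\gamma_r n_i)$. Since $d \mid \gamma_r n_i$, Corollary~\ref{Corollary:SemigroupWeak} applies with $\vec{m}=\vec{e}_j$ (linearly independent of $\vec{n}$ because $k\geq 2$ and all $n_h\geq 1$), $f\equiv 1$, and $I=(-\infty,s]$. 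It gives, for each fixed $s$,
\begin{equation*}
\frac{|\{\vec{r}\in\ZZ_S(\gamma_r n_i): \vec{r}\cdot\vec{e}_j \leq s\gamma_r n_i\}|}{|\ZZ_S(\gamma_r n_i)|}\;\longrightarrow\; \int_{-\infty}^{ds} M\big(t;0,\ldots,0,\tfrac{d}{n_j},0,\ldots,0\big)\,dt .
\end{equation*}

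Using \eqref{eq:LotsZeros} and substituting $u=\frac{n_j t}{d}$, the limit is the distribution function $G(s)=1-(1-n_j s)^{k-1}$ for $0\leq s\leq 1/n_j$. This $G$ is continuous and strictly increasing on $[0,1/n_j]$. Solving $G(s)=\tfrac12$ gives $s^*=\frac{1}{n_j}\big(1-2^{\frac{1}{1-k}}\big)$. Multiplying by the scale $\gamma_r n_i$ gives the claimed value $(1-2^{\frac{1}{1-k}})\gamma_r n_i/n_j$.

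The main step is converting weak convergence of the rescaled distributions into asymptotics for medians. Let $F_r$ be the distribution function of $x_{ij}/(\gamma_r n_i)$. Fix $\epsilon>0$ small. The convergence above holds at $s=s^*\pm\epsilon$, and $G(s^*-\epsilon)<\tfrac12<G(s^*+\epsilon)$ by strict monotonicity. So for all large $r$ we have $F_r(s^*-\epsilon)<\tfrac12<F_r(s^*+\epsilon)$. This forces every median of $x_{ij}/(\gamma_r n_i)$, under any convention for discrete medians, to lie in $[s^*-\epsilon,s^*+\epsilon]$. Since $s^*>0$ and $\epsilon$ is arbitrary, the ratio of the median of $x_{ij}$ to $s^*\gamma_r n_i$ tends to $1$. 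Here $r\to\infty$ forces $\gamma_r\to\infty$, because \eqref{eq:Rapid} implies $\gamma_r\to\infty$. The only subtleties are the $d$-dilation bookkeeping in Corollary~\ref{Corollary:SemigroupWeak} and the ambiguity of the median for a discrete distribution; the $\epsilon$-sandwich handles the latter.
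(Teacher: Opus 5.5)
Your proposal is correct and follows the paper's proof: you apply Corollary~\ref{Corollary:SemigroupWeak} to the $i$th row with $f\equiv 1$, $\vec{m}=\vec{e}_j$, and a half-line $I$, compute the distribution function $1-(1-n_j s)^{k-1}$ of the spline \eqref{eq:LotsZeros}, and solve for the value $\tfrac12$. Your $\epsilon$-sandwich, which uses strict monotonicity of the limiting distribution function, is a useful addition: it rigorously justifies the passage from ``the proportion of values below $s^*\gamma_r n_i$ tends to $\tfrac12$'' to ``the median is asymptotic to $s^*\gamma_r n_i$,'' a step the paper simply asserts.
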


\begin{proof}
    Let $M(t;0,0,\ldots,0,\tfrac{d}{n_j},0,\ldots,0)$ denote the spline \eqref{eq:LotsZeros}
    and observe that
    \begin{equation*}
        \int_{-\infty}^z M(t;0,0,\ldots,0,\tfrac{d}{n_j},0,\ldots,0)\,dt = 1-\left(1-\frac{n_j z}{d}\right)^{k-1}
    \end{equation*}
    for $0< z < d/n_j$.  Next solve $1-(1-\frac{n_jz}{d})^{k-1} = \frac{1}{2}$ and obtain
    $z = d(1-2^{\frac{1}{1-k}}) /n_j$,
    the median of the spline.  In Corollary \ref{Corollary:SemigroupWeak}, use $I = (-\infty,z/d]$, 
    $f = 1$ (constant function), $\vec{m} = \vec{e}_j$, 
    and $n = \gamma_r n_i$, to obtain 
    \begin{align*}
        \frac{1}{|\ZZ_S(n)|} \sum_{ \substack{ \vec{x} \in \ZZ_S(n) \\ (\vec{e}_j\cdot \vec{x})/n \in I }}  
        \!\!\!\!\!\!\!\!1\,\,
        &\sim \int_{-\infty}^z M(t; 0, 0, \ldots, 0,\tfrac{d}{n_j},0,\ldots,0 )\,dt = \frac{1}{2}
    \end{align*}
    as $r \to \infty$.
    The condition $(\vec{e}_j \cdot \vec{x})/n \in (-\infty, z/d]$ is equivalent to
    $x_{ij} \in (-\infty, nz/d]$.  The equation that defines $x_{ij}$ in \eqref{eq:ADN}
    has $n = \gamma_r n_i$.  Therefore, 
    \begin{equation*}
        \displaystyle\operatorname{median} x_{ij}
        \sim \frac{nz}{d} = \gamma_r n_i (1-2^{\frac{1}{1-k}}) /n_j = (1-2^{\frac{1}{1-k}})\bigg(\gamma_r\frac{n_i}{n_j}\bigg). \qedhere
    \end{equation*}
\end{proof}

\begin{proposition}\label{prop:number-of-matrix-alg-in-image}
    Let $\phi_r$ range over $\hom(\A_r,\A_{r+1})$.  Then as $r \to \infty$,
    the mean number of matrix algebras in $\phi_r(\A_r)\subseteq \A_{r+1}$ is asymptotically equivalent to
    \begin{equation}\label{eq:algs-in-image}
        \frac{\gamma_r}{k}(n_1+n_2+\cdots+n_k)\left(\frac{1}{n_1}+\frac{1}{n_2} + \cdots +\frac{1}{n_k}\right)
        = 
        \g_r k\cdot \frac{A(n_1,n_2,\ldots,n_k)}{H(n_1,n_2,\ldots,n_k)},
    \end{equation}
    in which $A(n_1,\ldots,n_k)$ and $H(n_1,\ldots,n_k)$ denote the arithmetic and harmonic means of the generators, respectively.
\end{proposition}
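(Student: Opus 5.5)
The plan is to write the number of matrix algebras in $\phi_r(\A_r)$ as the sum of all entries of the partial-multiplicity matrix, $\vec{1}\cdot X\vec{1} = \sum_{i=1}^k\sum_{j=1}^k x_{ij}$, and use linearity of expectation over the uniform measure on $\shom(\A_r,\A_{r+1})$. The mean is then $\sum_{i,j}\mu_1(x_{ij})$, with no asymptotic approximation yet. One point needs a remark: the statement lets $\phi_r$ range over $\hom(\A_r,\A_{r+1})$. I will interpret this via the identification of each homomorphism with its standard representative, that is, as an average over $\shom(\A_r,\A_{r+1})$. This is the measure used throughout Subsection~\ref{Subsection:Stats}, since every element of $\hom$ is unitarily equivalent to a unique standard one with the same partial multiplicities.

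Next, I will use Corollary~\ref{Corollary:Mean}, which gives $\mu_1(x_{ij})\sim \frac{\gamma_r}{k}\cdot\frac{n_i}{n_j}$ for each pair $(i,j)$. To sum these $k^2$ asymptotic equivalences, I will invoke Lemma~\ref{Lemma:AsympototicSum}, indexing over the pairs $(i,j)$ rather than over $i$. The hypothesis to check is that the ratio of any two of the comparison functions, namely
\[
\Big(\tfrac{\gamma_r}{k}\tfrac{n_i}{n_j}\Big)\Big/\Big(\tfrac{\gamma_r}{k}\tfrac{n_{i'}}{n_{j'}}\Big)=\frac{n_in_{j'}}{n_jn_{i'}},
\]
is nonzero and independent of $r$. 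This holds because every generator is positive. Alternatively, I could apply Proposition~\ref{prop:avg-nb-in-image} for each $j$ and then apply the lemma once more over $j$. The ratios of $\frac{\gamma_r}{n_j}\cdot\frac{1}{k}\sum_i n_i$ for different $j$ are again constant, so either route works.

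This yields
\[
\sum_{i,j}\frac{\gamma_r}{k}\frac{n_i}{n_j}=\frac{\gamma_r}{k}\Big(\sum_i n_i\Big)\Big(\sum_j \frac1{n_j}\Big),
\]
which is the left side of \eqref{eq:algs-in-image}. The final identity is algebraic: $\sum_i n_i = kA$ and $\sum_j 1/n_j = k/H$, so the product equals $\frac{\gamma_r}{k}\cdot kA\cdot\frac{k}{H}=\gamma_r k\,A/H$. There is no real obstacle here. The only step needing care is justifying the addition of asymptotic equivalences, since $\sim$ does not respect sums in general, and that is handled by the constant-ratio hypothesis of Lemma~\ref{Lemma:AsympototicSum}.
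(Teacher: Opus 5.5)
Your proposal is correct and is essentially the paper's proof: the paper sums Proposition~\ref{prop:avg-nb-in-image} over $j$ and applies Lemma~\ref{Lemma:AsympototicSum}, which is your alternative route. Summing all $k^2$ terms $\mu_1(x_{ij})$ at once is the same argument, since the lemma's induction works for any finite number of summands with positive comparison functions whose ratios do not depend on $r$.
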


\begin{proof}
    Sum \eqref{eq:avg-nb-in-image} over $j$ and appeal to Lemma \ref{Lemma:AsympototicSum}.
\end{proof}

We can generalize this result by assigning weights to the generators. Given $\m=(m_1,m_2,\ldots,m_k)\in \R^k$, 
in which $\m$ and $\vec{n}$ are linearly independent, each time a copy of $\M_{n_j}$ appears in $\phi_r(\M_{n_j})$,
count it as appearing $m_j$ times. Note that $\m$ need not consist of integral or even non-negative entries.

\begin{proposition}\label{prop:image-with-weightings}
    Let $\m=(m_1,\ldots,m_k)\in \R^k$ with $\m,\vec{n}$ linearly independent.  The weighted number of copies of matrix algebras in $\phi_r(\A_r)\subseteq \A_{r+1}$ is asymptotically equivalent to
    \begin{equation}\label{eq:weighted-algs-in-image}
        \frac{\gamma_r}{k}(n_1+\cdots+n_k)\left(\frac{m_1}{n_1}+\cdots +\frac{m_k}{n_k}\right).
    \end{equation}
\end{proposition}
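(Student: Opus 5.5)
The weighted count of matrix algebras in $\phi_r(\A_r)$ is a linear statistic of the partial-multiplicity matrix $X$. Each copy of $\M_{g_rn_j}$ appearing in $\phi_r(\A_r)$ is counted with weight $m_j$, so the weighted count is
\begin{equation*}
W(X)=\sum_{j=1}^k m_j \sum_{i=1}^k x_{ij} = \sum_{i=1}^k \m\cdot\vec{r}_i ,
\end{equation*}
where $\vec{r}_i$ is the $i$th row of $X$ as in \eqref{eq:RowsColumns}. The quantity to estimate is the mean of $W$ over $\shom(\A_r,\A_{r+1})$ with respect to the uniform measure $\P_r$. By \eqref{eq:HomZZ}, this set is a product of the factorization sets $\ZZ_S(\gamma_r n_i)$, one for each row. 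By linearity of expectation, the mean of $W$ therefore equals
\begin{equation*}
\sum_{i=1}^k \frac{1}{|\ZZ_S(\gamma_r n_i)|}\sum_{\vec{r}\in\ZZ_S(\gamma_r n_i)} \m\cdot\vec{r},
\end{equation*}
because the factors coming from the other rows cancel, exactly as in the proof of Proposition~\ref{Proposition:Moments}.

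For each fixed $i$, I would apply Corollary~\ref{Corollary:SemigroupWeak} with $n=\gamma_r n_i$, $I=\R$, and $f(t)=t$. The hypothesis that $\m$ and $\vec{n}$ are linearly independent is exactly the one we are given. Note that $d\mid \gamma_r n_i$, as the corollary requires. Although $f$ is unbounded, the measures involved are all supported in a fixed compact interval (Remark~\ref{Remark:LocalLimit}), so we may replace $f$ by a bounded continuous function that agrees with $t$ there. This yields
\begin{equation*}
\frac{1}{|\ZZ_S(n)|}\sum_{\vec{r}\in\ZZ_S(n)} \m\cdot\vec{r} \sim n\int t/d\; M\Big(t;\tfrac{m_1d}{n_1},\ldots,\tfrac{m_kd}{n_k}\Big)\,dt .
\end{equation*}
The mean of a B-spline $M(t;a_1,\ldots,a_k)$ is $(a_1+\cdots+a_k)/k$. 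This is the $p=1$ case of \eqref{eq:Moments}, equivalently \eqref{eq:Mean} generalized to weights, and follows from the divided-difference (Peano kernel) characterization of B-splines. Using it, the right side becomes $\frac{\gamma_r n_i}{k}\sum_j \frac{m_j}{n_j}$.

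Finally, I would sum over $i$. The asymptotic expressions $g_i(r)=\frac{\gamma_r n_i}{k}\sum_j m_j/n_j$ have constant ratios $n_i/n_{i'}$, so Lemma~\ref{Lemma:AsympototicSum} applies and gives \eqref{eq:weighted-algs-in-image}.

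The main obstacle is degeneracy. If $\sum_j m_j/n_j=0$, then each $g_i$ vanishes and the relation ``$\sim$'' is meaningless; the statement should be read as holding when this sum is nonzero. I would flag this case, or rephrase the conclusion as saying that the difference is $o(\gamma_r)$. A second, minor point is justifying the use of $f(t)=t$, which the truncation argument above handles. I would also double-check the B-spline mean formula against \eqref{eq:SplineSame} as a sanity check: in the case with a single nonzero knot $d/n_j$, the mean is $d/(kn_j)$, which agrees with Corollary~\ref{Corollary:Mean}.
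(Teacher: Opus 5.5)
Your proof is correct, but it slices the computation along a different axis from the paper's. The paper never uses the spline with knots $m_jd/n_j$. It takes the column means already obtained in Proposition~\ref{prop:avg-nb-in-image}, which rest only on the one-nonzero-knot spline \eqref{eq:LotsZeros} (weight vector $\vec{e}_j$). It multiplies the $j$th column mean by $m_j$ and sums over $j$ via Lemma~\ref{Lemma:AsympototicSum}. You instead split $X$ by rows, apply Corollary~\ref{Corollary:SemigroupWeak} with the full weight $\m$ on each $\ZZ_S(\gamma_r n_i)$, and sum over $i$.

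Your route needs one extra ingredient: the mean of a B-spline is the average of its knots. This is true (Peano-kernel identity with $f(t)=t^k$). However, \eqref{eq:Mean} in the paper covers only $\m=\vec{1}$, so you are importing the fact from outside, and citing it as the $p=1$ case of the paper's moment formula is not quite accurate. Deriving it internally from the $\vec{e}_j$ case by linearity is essentially the paper's proof.

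In exchange, your final summation is the clean one. The row contributions $\frac{\gamma_r n_i}{k}\sum_j m_j/n_j$ are proportional with a common sign, so Lemma~\ref{Lemma:AsympototicSum} applies verbatim. The paper's terms $m_j\cdot(\text{column mean})$ can vanish, violating the lemma's nonzero-ratio hypothesis, or partially cancel.

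Conversely, the paper's route shows that the linear-independence hypothesis is superfluous. It only uses the independence of $\vec{e}_j$ and $\n$, which is automatic for $k\geq 2$; indeed, if $\m=c\n$ the weighted count is identically $c\gamma_r\sum_i n_i$, matching the formula. Your route uses the hypothesis genuinely.

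Your flag on the degenerate case $\sum_j m_j/n_j=0$ is well taken. The paper passes over it, and there the correct conclusion is that the mean is $o(\gamma_r)$.
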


\begin{proof}
        Weight the result from \eqref{eq:avg-nb-in-image} by $m_j$ and apply 
        Lemma \ref{Lemma:AsympototicSum}.
\end{proof}

\subsection{Examples}\label{Subsection:Examples}
In this subsection, we illustrate the previous material with several illustrative examples.
Since asymptotic statistics about any given $\eng$ are computable using standard software, 
this sheds light on the typical algebras in $\eng$.

\begin{example}\label{ex:6-9-20}
    Consider $\eng$, in which $\n = (6,9,20)$ and $g_r=r!$. Figure~\ref{Figure:6-9-20} and 
    Table~\ref{tab:6-9-20} concern the values of the $(1,1)$ entry $x_{11}^{(499)}$ as $X_{499}$ runs over all partial multiplicity matrices corresponding to standard unital $*$-homomorphisms between 
    $$\A_{499}=\M_{499!\cdot 6}\oplus\M_{499! \cdot  9}\oplus\M_{499! \cdot 20}$$ and 
    $$\A_{500}=\M_{500!\cdot 6}\oplus\M_{500!\cdot 9}\oplus\M_{500!\cdot 20}.$$
    Similarly, we compute $\P\big(200\leq x_{2,1}^{(499)}\leq 500\big)=42.71\%$ directly, which
    agrees with Proposition~\ref{Proposition:EdgeWeights}, or \eqref{eq:LocalLimit}.
\begin{figure}[h!]
\centering
  \includegraphics[width=0.5\textwidth]{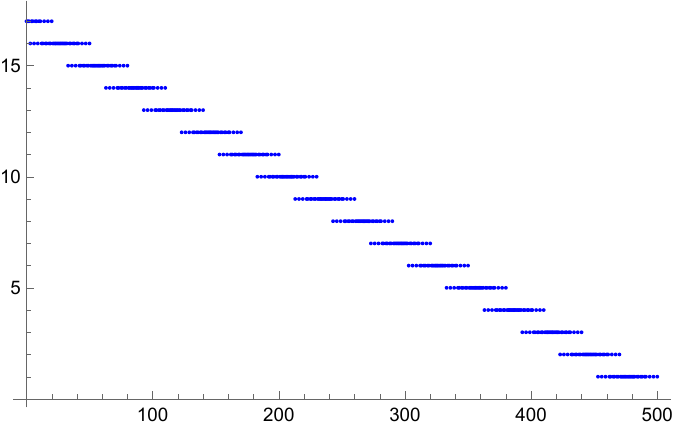}

  \caption{Histogram for $x_{11}^{(499)}$ from Example~\ref{ex:6-9-20}, in which $\n = (6,9,20)$ and $g_r=r!$. The vertical axis represents multiplicities as $x_{11}^{(499)}$ runs over all partial multiplicity matrices corresponding to unital $*$-homomorphisms between $\A_{499}$ and $\A_{500}$.}
\label{Figure:6-9-20}
\end{figure}

\begin{table}[h!]
    \centering
    \begin{tabular}{c|c|c}
      Statistic& Actual & Predicted \\
       \hline
      Mean & $168.49$ & $166.67$ (by \eqref{eq:avg-nb-in-image})\\
      Median & $148$ & $146.45$ (by \ref{Proposition:Median})\\
      Mode & \{0,1,2,4,5,7,8,10,11,14,17,20\} &   $0$ (by \ref{Proposition:EdgeWeights})\\
      Standard Deviation & $119.50$ &  $117.85$ (by \ref{Corollary:Mean})
    \end{tabular}
    \caption{Computed statistics of $x_{11}^{(499)}$ corresponding to Figure~\ref{Figure:6-9-20} and Example~\ref{ex:6-9-20}}\label{tab:6-9-20}
\end{table}

\end{example}

\begin{example}\label{ex:7-13-14-15}
The distribution of values of $x_{ij}^{(r)}$ is not always triangular. Consider $\eng$ with $\vec{n}=(7,13,14,15)$ and $g_r=r!$. In Figure~\ref{fig:7-13-14-15}, we exhibit statistics on $x_{(4j)}^{(100)}$ for every generator $n_j\in\n$, and in Table~\ref{tab:7-13-14-15} statistics are provided for $x_{44}^{(100)}$.

\begin{figure}[htbp]
\centering
\begin{subfigure}{0.475\textwidth}
  \centering
  \includegraphics[width=\textwidth]{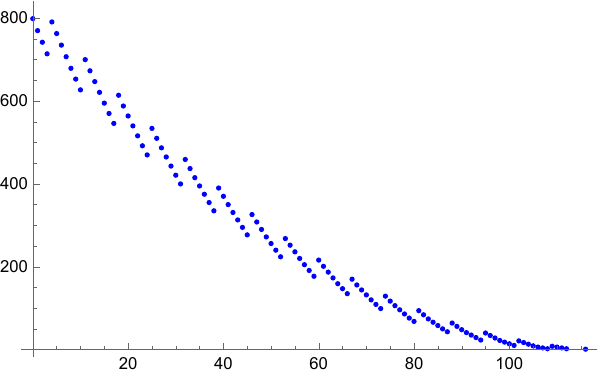}
  \caption{Distribution of values of $x_{41}^{(100)}$.}
\end{subfigure}
\hfill
\begin{subfigure}{0.475\textwidth}
  \centering
  \includegraphics[width=\textwidth]{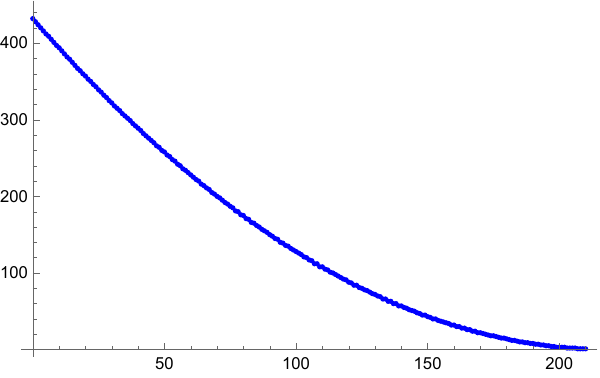}
  \caption{Distribution of values of $x_{42}^{(100)}$.}
\end{subfigure}
\\
\begin{subfigure}{0.475\textwidth}
  \centering
  \includegraphics[width=\textwidth]{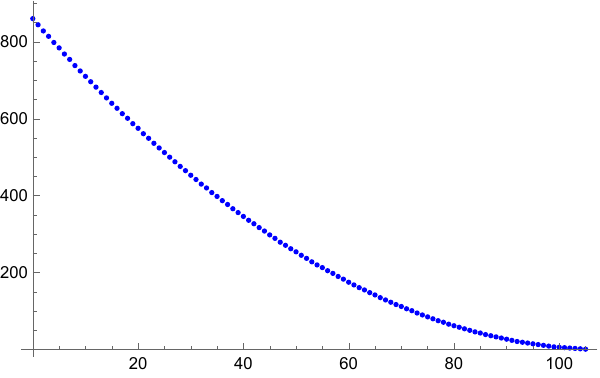}
  \caption{Distribution of values of $x_{43}^{(100)}$.}
\end{subfigure}
\hfill
\begin{subfigure}{0.475\textwidth}
  \centering
  \includegraphics[width=\textwidth]{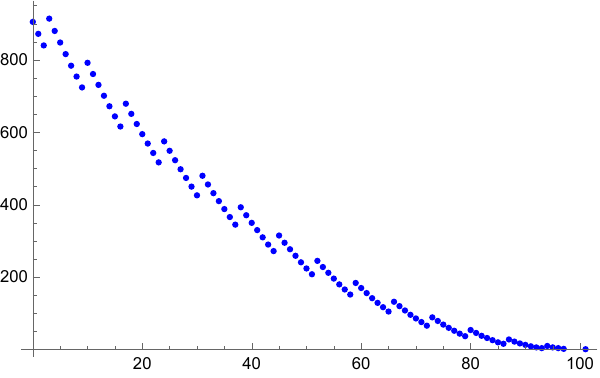}
  \caption{Distribution of values of $x_{44}^{(100)}$.}
  \label{subfig:7-13-14-15}
\end{subfigure}
\hfill
\caption{Distribution of values of $x_{4j}^{(499)}$ for $j=1,2,3,4$ as in Example~\ref{ex:7-13-14-15}. For this family $\eng$, $\A_{100}=\M_{100!\cdot 7}\oplus\M_{100!\cdot 13}\oplus\M_{100!\cdot 14}\oplus\M_{100!\cdot 15}$ and $\A_{101}=\M_{101!\cdot 7}\oplus\M_{101!\cdot 13}\oplus\M_{101!\cdot 14}\oplus\M_{101!\cdot 15}$. We compute values of $x_{4j}^{(499)}$ for all possible such $X_{499}$. Thus, these distributions show the number of times $\M_{100!\cdot j}$, the $j$th block of $\A_{100}$, appears in $\M_{101!4}$, the $4$th block of $\A_{101}$.}
\label{fig:7-13-14-15}
\end{figure}

\begin{table}[h!]
    \centering
    \begin{tabular}{c|c|c}
        Statistic & Actual & Predicted
        \\
        \hline
         Mean &$25.18$&  $25.25$ (by \ref{Corollary:Mean})\\
          Median & $21$& $20.84$ (by \ref{Proposition:Median})\\
          Mode & $3$ & $0$ (by \ref{Proposition:EdgeWeights})\\
         Std Deviation& $19.86$ & $19.56$ (by \ref{Proposition:StDev})
    \end{tabular}
    \caption{Comparing statistics of $x_{11}^{(499)}$ corresponding to Subfigure~\ref{subfig:7-13-14-15} and Example~\ref{ex:7-13-14-15} and predicted values.}
    \label{tab:7-13-14-15}
\end{table}

\end{example}

\begin{example}\label{ex:gcd3}
    Consider $\eng$ with $\n=(3,12,18)$ and $g_r=2^{(r-1)^2}$, and $f(x)=\sin(x)$. In this case $\gcd(3,12,18)=3$.  Figure~\ref{Figure:gcd3} exhibits the distribution of $\sin(x_{i2}^{(5)})$ for each generator $n_i$.

    \begin{figure}[h!]
\centering
\begin{subfigure}{0.475\textwidth}
  \centering
  \includegraphics[width=\textwidth]{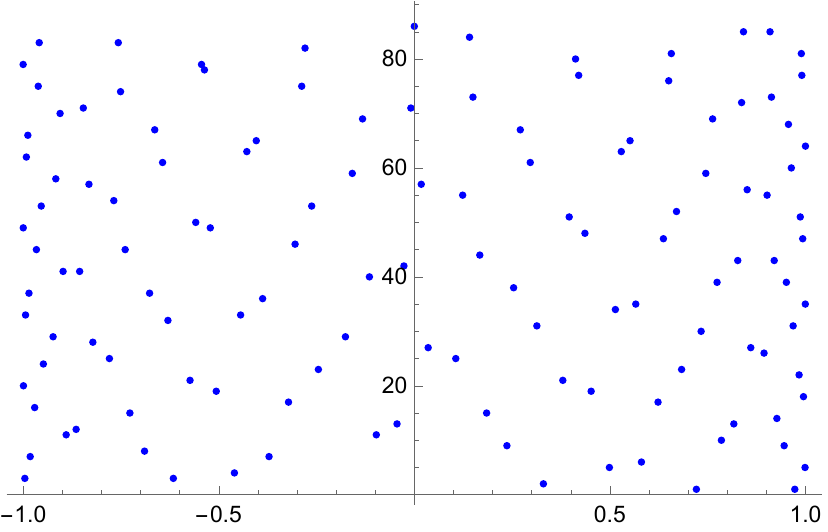}
  \caption{Distribution of values of $\sin(x_{12}^{(5)})$.}
  \label{subfig:gcd3}
\end{subfigure}
\hfill
\begin{subfigure}{0.475\textwidth}
  \centering
  \includegraphics[width=\textwidth]{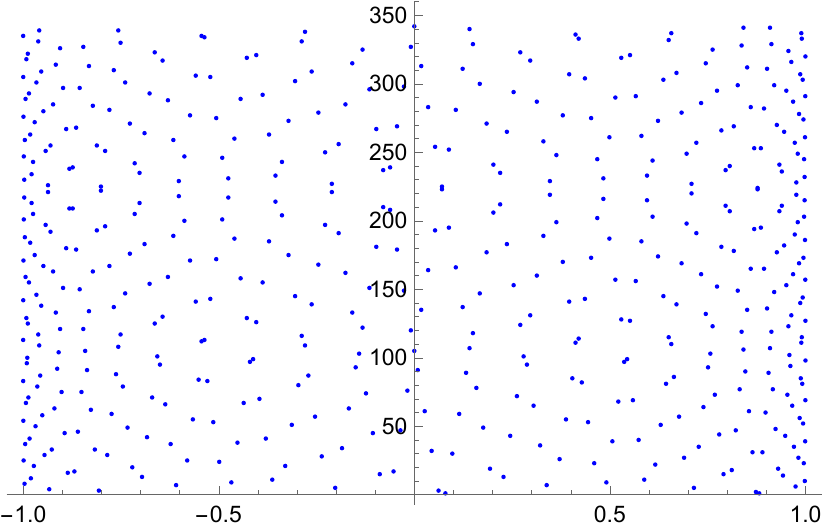}
  \caption{Distribution of values of $\sin(x_{22}^{(5)})$.}
\end{subfigure}
\\
\begin{subfigure}{0.475\textwidth}
  \centering
  \includegraphics[width=\textwidth]{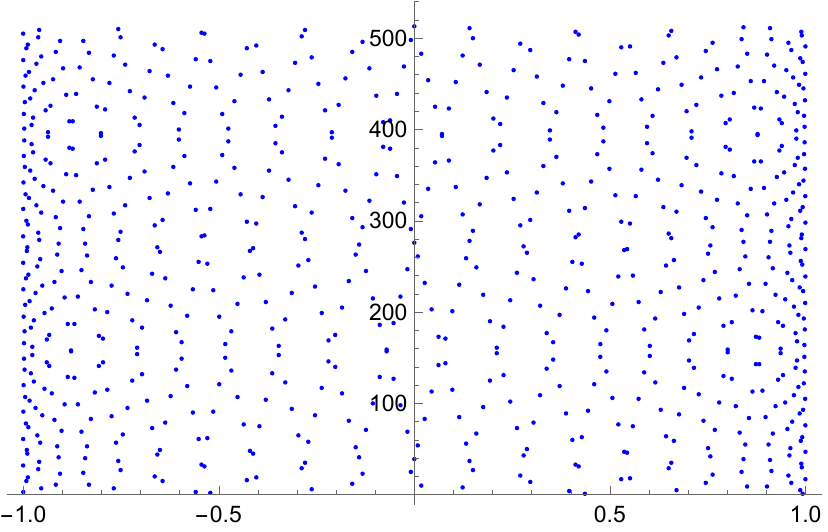}
  
  \caption{Distribution of values of $\sin(x_{32}^{(5)})$.}
\end{subfigure}
\hfill

\caption{Distribution of $\sin(x_{i2}^{(5)})$ for every generator $n_i$ as in Example~\ref{ex:gcd3}, with the $x$-axis representing the value of $\sin(x_{i2}^{(5)})$, and the $y$-axis represents the multiplicity of this value. For the family $\eng$, $\A_{6}=\M_{2^{16}\cdot 3}\oplus\M_{2^{16}\cdot 12}\oplus\M_{2^{16}\cdot 18}$ and $\A_{7}=\M_{2^{25}\cdot 3}\oplus\M_{2^{25}\cdot 12}\oplus\M_{2^{25}\cdot 18}$. Recall $x_{i2}^{(5)}$ corresponds to the $(i,2)$ entry of the partial multiplicity matrix $X_{5}$ corresponding to some $\phi\in \hom(\A_{5},\A_{6})$. We compute values of $\sin(x_{i2}^{(5)})$ for all possible such $X_{5}$. Thus, these distributions show the {sine of the} number of times $\M_{2^{16}12}$, the second block of $\A_{5}$, appears in the $\M_{2^{25}i}$, the $i$th block of $\A_{6}$. }
\label{Figure:gcd3}
\end{figure}

\end{example}

\section{Injectivity}\label{Section:Injective}

In this section, we address injectivity:  how often is $\phi_r:\A_r\to\A_{r+1}$ injective?
This culminates in Theorem \ref{Theorem:InjHoms}, which asserts that with probability $1$, a randomly selected chain in $\eng$ has all but finitely many homomorphisms injective.  This justifies our treatment of $\eng$ as a collection of AF algebras.

\begin{definition}
    The generator $n_i$ \emph{vanishes at the $r$th level} of a chain $\A \in \eng$ if, in the Bratteli diagram $B(\A)$, the vertex corresponding to $n_i$ at level $r$ is adjacent to no vertices at level $r+1$.
We sometimes say that a generator \emph{vanishes with respect to $\phi_r$} as an indirect way of specifying $\A$.
\end{definition}

\begin{example}
    Fix $\vec{n}=(n_1,n_2,n_3)$ with $\gcd(n_1,n_2,n_3)=1$.
    The following figure shows the first three levels of the Bratteli diagram associated to an AF algebra in $\eng$. 
    The multiplicities of the embeddings are unspecified for clarity.
    In this case, $n_2$ vanishes at the second level (red) and $n_3$ vanishes at the third level (blue).
    \begin{equation*}\begin{tikzcd}[cramped,sep=scriptsize]
	& {\mathcal{A}_1} & {\mathcal{A}_2} & {\mathcal{A}_3} & {\mathcal{A}_4} \\[-10pt]
	{n_1} & \bullet & \bullet & \bullet & \bullet \\
	{n_2} & \bullet & \color{red}\bullet & \bullet & \bullet \\
	{n_3} & \bullet & \bullet & \color{blue}\bullet & \bullet
	\arrow[from=2-2, to=2-3]
	\arrow[from=2-2, to=3-3]
	\arrow[from=2-3, to=2-4]
	\arrow[from=2-3, to=3-4]
	\arrow[from=2-3, to=4-4]
	\arrow[from=2-4, to=3-5]
	\arrow[from=3-2, to=2-3]
	\arrow[from=3-2, to=3-3]
	\arrow[from=3-2, to=4-3]
	\arrow[from=3-4, to=2-5]
	\arrow[from=3-4, to=3-5]
	\arrow[from=3-4, to=4-5]
	\arrow[from=4-2, to=4-3]
	\arrow[from=4-3, to=4-4]
\end{tikzcd}
\end{equation*}
Note that $\phi_r:\A_r\to\A_{r+1}$ is injective if and only if no generators vanish at the $r$th level. Thus, $\phi_1$ is injective, but $\phi_2$ and $\phi_3$ are not.
\end{example}

\begin{proposition}\label{Proposition:AllVanish}
    As $r\to\infty$,
    the probability that every generator indexed by $M\subsetneq \{1,2,\ldots,k\}$ vanishes at level $r$ is asymptotically equivalent to
    \begin{equation*}
    \left(\frac{(k-1)!}{(k-|M|-1)!}\right)^k\frac{\prod_{m\in M}n_m^k}{(n_1 n_2 \cdots n_k)^{|M|}\cdot \gamma_r^{k|M|}}.
    \end{equation*}
\end{proposition}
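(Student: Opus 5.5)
The plan is to turn the vanishing condition into a ratio of factorization counts and then apply the asymptotic \eqref{eq:Cardinal-with-d} to the numerator and the denominator separately. Generator $n_m$ vanishes at level $r$ exactly when the $m$th column $\vec{c}_m$ of $X_r$ is zero, that is, $x_{im}=0$ for every $i$. So the event ``every $n_m$ with $m\in M$ vanishes'' is the event that every row $\vec{r}_i$ of $X_r$ is supported on the complement $M^c=\{1,\ldots,k\}\setminus M$. This set is nonempty because $M\subsetneq\{1,\ldots,k\}$.

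\textbf{Step 1: reduce to row counts.} Let $S'=\semigroup{n_j : j\in M^c}$, a semigroup with $k-|M|$ generators, counted with colors as usual. The rows of $X_r$ are chosen independently, as in the proof of \eqref{eq:HomZZ}. Hence the number of $X$ with $X\vec{n}=\gamma_r\vec{n}$ and $\vec{c}_m=\vec{0}$ for all $m\in M$ is $\prod_{i=1}^k|\ZZ_{S'}(\gamma_r n_i)|$. Since $\P_r$ is uniform, the desired probability is
\begin{equation*}
\prod_{i=1}^k \frac{|\ZZ_{S'}(\gamma_r n_i)|}{|\ZZ_{S}(\gamma_r n_i)|}.
\end{equation*}

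\textbf{Step 2: asymptotics of each factor.} Put $d'=\gcd\{n_j:j\in M^c\}$. Apply \eqref{eq:Cardinal-with-d} to $S'$ (with $k$ replaced by $k-|M|$) and to $S$. This gives
\begin{equation*}
\frac{|\ZZ_{S'}(\gamma_r n_i)|}{|\ZZ_{S}(\gamma_r n_i)|}\sim \frac{d'}{d}\cdot\frac{(k-1)!}{(k-|M|-1)!}\cdot\frac{\prod_{m\in M}n_m}{(\gamma_r n_i)^{|M|}}.
\end{equation*}
Here $d'\mid\gamma_r n_i$ is needed; this is automatic when $d'=d$, and it is the situation in which the stated formula is meant. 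The edge case $|M|=k-1$ is consistent: $S'$ has a single generator $n_j$, the count is $1$ when $n_j\mid \gamma_r n_i$, and $(k-|M|-1)!=0!=1$, so it can be checked directly.

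\textbf{Step 3: multiply the factors.} Take the product over $i=1,\ldots,k$. Products of finitely many asymptotic equivalences are asymptotic equivalences, so Lemma~\ref{Lemma:AsympototicSum} is not needed here. The factors $\prod_{m\in M}n_m$ combine to $\prod_{m\in M}n_m^k$. The denominators give $\gamma_r^{k|M|}(n_1\cdots n_k)^{|M|}$, and the constants give $\big((k-1)!/(k-|M|-1)!\big)^k$. This is exactly the displayed expression, up to the factor $(d'/d)^k$.

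\textbf{Main obstacle: the gcd bookkeeping.} The stated formula carries no gcd factor, so I would make explicit that it presumes $d'=d$. The most important case is $d=1$ with the remaining generators still coprime. In general a factor $(d'/d)^k$ appears, and the probability is $0$ whenever $d'\nmid\gamma_r n_i$ for some $i$. For the later Borel--Cantelli application only the order $\gamma_r^{-k|M|}$ matters, and that order holds up to a bounded constant in every case.
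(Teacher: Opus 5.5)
Your proposal is correct and follows the paper's proof essentially step for step. Both identify the event with the columns $\vec{c}_m$, $m\in M$, being zero, factor the count as $\prod_i|\ZZ_{S'}(\gamma_r n_i)|$, and compare with \eqref{eq:HomZZ} via \eqref{eq:Cardinal-with-d}; your gcd caveat is also right, since the paper writes $d^k$ in \eqref{eq:NME} where $\gcd\{n_j : j\notin M\}^k$ belongs, so the stated formula tacitly assumes $d'=d$ (for instance $\vec{n}=(2,3)$, $M=\{2\}$ gives probability $0$ whenever $\gamma_r$ is odd, and an extra factor $(d'/d)^k=4$ when $\gamma_r$ is even).
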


\begin{proof}
Let $M \subsetneq \{1,2,\ldots,k\}$. The number $N$ of $\phi \in \shom(\A_r,\A_{r+1})$
that place no copies of $\M_{g_rn_m}$ in $\M_{g_{r+1}n_m}$ for each $m \in M$ is the number of
$X = [ \vec{c}_1~\vec{c}_2~\cdots~ \vec{c}_k] \in \M_k(\Z_{\geq 0})$
such that $X \vec{n} = \gamma_r\vec{n}$ and $\vec{c}_m = \vec{0}$ for all $m \in M$.
Let $S'$ be the semigroup generated by $\{n_1,n_2,\ldots,n_k\}\setminus\{n_m :  m\in M\}$. Then
\eqref{eq:Cardinal-with-d} ensures that
\begin{align}
    N
    &=|\{X\in \M_k(\Z_{\geq 0}) :  \text{$X\vec{n}=\gamma_r\vec{n}$ and $\vec{c}_m = \vec{0}$ for all $m\in M$}\} | \nonumber\\
    &=|\{X\in \M_k(\Z_{\geq 0}) :  \text{$\vec{r}_i\cdot \vec{n} = \gamma_r n_i$ for all $1\leq i \leq k$ and $\vec{c}_m = \vec{0}$ for all $m\in M$}\} | \nonumber\\
    &=\prod_{i=1}^k|\{ \vec{r}_i\in \Z_{\geq 0}^k : \text{$\vec{r}_i\cdot \vec{n} = \gamma_r n_i$ and $\vec{r}_i\cdot  \vec{e}_m = 0$ for all $1\leq i \leq k$ and $m\in M$}\}| \nonumber \\
    &=\prod_{i=1}^k|Z_{S'}(\gamma_r n_i)| \nonumber \\
    &=\frac{d^k\gamma_r^{k(k-|M|-1)}\prod_{m\in M}n_m^k}{[(k-|M|-1)!]^k(n_1 n_2 \cdots n_k)^{|M|+1}}. \label{eq:NME}
\end{align}
Divide by $|\shom(\A_r,\A_{r+1})|$, use Proposition \ref{Proposition:HomTotal}, and obtain the desired result:
\begin{align*}
    \frac{N}{| \shom(\A,\A_{r+1}) |}
    &\sim \frac{d^k\gamma_r^{k(k-|M|-1)}\prod_{m\in M}n_m^k}{[(k-|M|-1)!]^k(n_1 n_2 \cdots n_k)^{|M|+1}}
    \cdot \frac{[(k-1)!]^k(n_1 n_2 \cdots n_k)}{d^k\gamma_r^{k(k-1)}}\\
    &=\left(\frac{(k-1)!}{(k-|M|-1)!}\right)^k\frac{\prod_{m\in M}n_m^k}{\gamma_r^{k|M|}(n_1 n_2 \cdots n_k)^{|M|}}.
    \qedhere
\end{align*}
\end{proof}

\begin{remark}
If $|M|=0$, the expression in the previous proposition is $1$. This is expected:
the probability that zero or more generators vanished is obviously $1$. 
\end{remark}

\begin{corollary}\label{cor:proba-h-vanish}
Let $1 \leq h \leq k$.
    The probability that $h$ or more generators vanish at the $r$th level is asymptotically upper bounded by
    \begin{equation}\label{eq:proba-h-vanish}
        \frac{(h!)^k}{\gamma_r^{kh}} \cdot \binom{k-1}{h}^k \cdot \frac{e_h(n_1^k,n_2^k,\ldots,n_k^k)}{(n_1 n_2\cdots n_k)^{h}},
    \end{equation}
     in which $e_h$ denotes the elementary symmetric polynomial of degree $h$.
\end{corollary}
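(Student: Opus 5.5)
The plan is a union bound over subsets of size exactly $h$, with each term supplied by Proposition~\ref{Proposition:AllVanish}. If at least $h$ generators vanish at level $r$, then some subset $M \subseteq \{1,2,\ldots,k\}$ with $|M| = h$ has every generator it indexes vanishing. Hence the probability in question is at most
\begin{equation*}
\sum_{|M|=h} \P(\text{every $n_m$ with $m\in M$ vanishes at level $r$}).
\end{equation*}
This inequality holds exactly for each $r$, not just asymptotically.

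For $h \leq k-1$ we have $M \subsetneq \{1,\ldots,k\}$, so Proposition~\ref{Proposition:AllVanish} gives each summand asymptotically as
\begin{equation*}
\Big(\tfrac{(k-1)!}{(k-h-1)!}\Big)^k \frac{\prod_{m\in M} n_m^k}{(n_1\cdots n_k)^h\,\gamma_r^{kh}}.
\end{equation*}
Every summand carries the same $r$-dependence $\gamma_r^{-kh}$, so the ratio of any two limiting expressions is a nonzero constant. Lemma~\ref{Lemma:AsympototicSum} therefore lets us sum the asymptotics term by term.

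The sum is then evaluated in two steps. Summing $\prod_{m\in M} n_m^k$ over all $h$-subsets $M$ gives exactly $e_h(n_1^k,\ldots,n_k^k)$. For the constant, write $\frac{(k-1)!}{(k-h-1)!} = h!\binom{k-1}{h}$. This gives precisely \eqref{eq:proba-h-vanish}. Dividing the union-bound probability by \eqref{eq:proba-h-vanish}, the $\limsup$ is at most $1$, which is the meaning of $\lesssim$.

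The main obstacle is the edge case $h = k$, where Proposition~\ref{Proposition:AllVanish} does not apply. Here $\binom{k-1}{k} = 0$, so the claimed bound is $0$. The event itself is empty: the matrix $X$ satisfies $X\vec{n} = \gamma_r\vec{n}$ with $\vec{n} \neq \vec{0}$, so $X \neq 0$, and not all columns can vanish. The probability is therefore identically $0$ and the statement holds trivially, granting the convention $0 \lesssim 0$. I would treat this case separately at the start.

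A secondary point to check is the meaning of "vanish". In the proof of Proposition~\ref{Proposition:AllVanish}, vanishing of $n_m$ is the condition $\vec{c}_m = \vec{0}$, which is exactly "no outgoing edges" in the definition. So the events combined in the union bound are the same events the proposition counts.
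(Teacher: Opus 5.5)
Your proposal is correct and is essentially the paper's argument: a union bound over the $h$-element subsets $M$, with each term supplied by Proposition~\ref{Proposition:AllVanish} (equivalently \eqref{eq:NME}), Lemma~\ref{Lemma:AsympototicSum} to add the asymptotics, and $\sum_{|M|=h}\prod_{m\in M}n_m^k = e_h(n_1^k,\ldots,n_k^k)$. The differences are cosmetic: the paper sums the counts \eqref{eq:NME} before dividing by $|\shom(\A_r,\A_{r+1})|$, whereas you sum the probabilities. Your explicit treatment of $h=k$ (empty event, zero bound) covers a case the paper leaves implicit.
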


\begin{proof}
For each $M \subsetneq \{1,2,\ldots,k\}$, the asymptotic number of $\phi \in \shom(\A_r,\A_{r+1})$
that place no copies of $\M_{g_rn_m}$ in $\M_{g_{r+1}n_m}$ for each $m \in M$ is given by \eqref{eq:NME}.
Use Lemma \ref{Lemma:AsympototicSum}, sum \eqref{eq:NME} over all $M\subsetneq\{1,2,\ldots,k\}$ of size $h$, and obtain
\begin{equation*}
\frac{d^k\gamma_r^{k(k-h-1)}}{[(k-h-1)!]^k(n_1\cdots n_k)^{h+1}}\sum_{\substack{M\subseteq\{1,2,\ldots,k\}\\|M|=h}}\prod_{m\in M}n_m^k
= \frac{d^k\gamma_r^{k(k-h-1)} e_h(n_1^k,n_2^k,\ldots,n_k^k) }{[(k-h-1)!]^k(n_1\cdots n_k)^{h+1}} .
\end{equation*}
This is an asympototic upper bound on the number of standard homomorphisms at the $r$th level that annihilate $h$ or more generators.  Divide this expression by $|\shom(\A_r,\A_{r+1})|$, use Proposition \eqref{Proposition:HomTotal}, and obtain the desired result.
\end{proof}

The previous results culminate in following theorem.

\begin{theorem}\label{Theorem:InjHoms}
Those chains such that all but finitely many of the $\phi_r$ homomorphisms are injective form a set of full measure in $\eng$.
That is, with probability $1$, a randomly selected chain in $\eng$ has all but finitely many homomorphisms injective.
\end{theorem}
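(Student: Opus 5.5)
We plan a Borel--Cantelli argument. For each $r$, let $E_r\subseteq\eng$ be the event that $\phi_r$ fails to be injective. By the example preceding Proposition \ref{Proposition:AllVanish}, this is the same as saying that at least one generator vanishes at level $r$. The event $E_r$ depends only on the $r$th coordinate, so it is a cylinder set in $\mathscr{B}$, and by \eqref{eq:ProbabilityProduct} we have $\P(E_r)=\P_r(\pi_r(E_r))$. The set of chains with all but finitely many $\phi_r$ injective is exactly the complement of $\limsup_r E_r$. By the first Borel--Cantelli lemma, it therefore suffices to prove that $\sum_{r=1}^\infty \P(E_r)<\infty$.

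To bound $\P(E_r)$, we apply Corollary \ref{cor:proba-h-vanish} with $h=1$. It says that the probability that at least one generator vanishes at level $r$ is asymptotically bounded above by
\begin{equation*}
C\,\gamma_r^{-k},\qquad C=(k-1)^k\,\frac{n_1^k+n_2^k+\cdots+n_k^k}{n_1n_2\cdots n_k},
\end{equation*}
a constant independent of $r$. If one prefers to avoid that corollary's use of Lemma \ref{Lemma:AsympototicSum}, a union bound gives the same result: $\P(E_r)\le\sum_{m=1}^k \P(\text{$n_m$ vanishes at level $r$})$. Each of the $k$ summands is, by Proposition \ref{Proposition:AllVanish} with $M=\{m\}$, asymptotically $((k-1))^k n_m^k/(n_1\cdots n_k\,\gamma_r^{k})$. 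Note that $M=\{m\}$ is a proper subset because $k\geq2$. In either version, the relation $\lesssim$ implies that for all sufficiently large $r$ we have $\P(E_r)\le 2C\gamma_r^{-k}$.

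The rapid-growth hypothesis \eqref{eq:Rapid} now gives $\sum_r \gamma_r^{-k}<\infty$. The finitely many early terms are each at most $1$, so $\sum_r\P(E_r)<\infty$. Borel--Cantelli then shows that $\P(\limsup E_r)=0$, and the complement has full measure.

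The point needing the most care is to make sure that the asymptotic estimates are legitimately applicable as $r\to\infty$. They are asymptotics in $n=\gamma_r n_i\to\infty$, which holds because $\gamma_r\to\infty$, and this in turn follows from \eqref{eq:Rapid}, since the $\gamma_r$ are positive integers. One also needs $d\mid\gamma_r n_i$, which is automatic. A further subtlety is that, in the proof of Proposition \ref{Proposition:AllVanish}, removing a generator can change the gcd of the remaining generators to some $d'\geq d$. This affects only the constant, through a factor $(d'/d)^k$, and not the decay rate $\gamma_r^{-k}$, so summability is unaffected. I would remark on this explicitly rather than rely on the exact constant.
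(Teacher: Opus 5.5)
Your proposal is correct and follows essentially the same route as the paper: apply Borel--Cantelli to the non-injectivity events $E_r$, bound $\P(E_r)$ by $c\,\gamma_r^{-k}$ using Corollary \ref{cor:proba-h-vanish} with $h=1$ (equivalently, your union bound over $M=\{m\}$), and conclude summability from \eqref{eq:Rapid}. Your remark that the remaining generators can have a larger gcd $d'$ is a correct refinement the paper glosses over: it means Proposition \ref{Proposition:AllVanish} really only gives an upper bound, off by a factor of $(d'/d)^k$ (or yields $0$ when divisibility fails). This changes only the constant, so summability is unaffected.
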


\begin{proof}
The hypothesis \eqref{eq:Rapid} ensures that $\sum_{r=1}^{\infty} \gamma_r^{-k} < \infty$ for each $k\geq 2$.
Let $E_r$ denote the event that an element of $\shom(\A_r,\A_{r+1})$ is noninjective.
This occurs if and only if at least one generator vanishes at level $r$ with respect to $\phi_r$. Corollary \ref{cor:proba-h-vanish} with $h=1$ ensures that
\begin{equation*}
\P(E_r)\lesssim
\frac{1}{\gamma_r^{k}} \cdot \frac{(k-1)^k\sum_{i=1}^k n_i^k}{(n_1 n_2 \cdots n_k)} = \frac{c}{\gamma_r^k}
\end{equation*}
as $r\to\infty$, so $\sum_{r=1}^{\infty} \P(E_r)$ converges.
The Borel--Cantelli lemma ensures that
the probability that a randomly selected chain contains infinitely many noninjective homomorphisms is zero.
\end{proof}

Recall that telescoping, the second Bratelli-diagram move, ensures that removing any finite number of initial levels does not change the isomorphism class of the resulting AF algebra.  As a consequence of the previous theorem, we may think of the elements of $\eng$ as AF algebras, and hence we may speak of randomly selected algebras drawn from the ensemble $\eng$.  

\section{Simplicity}\label{Section:Simplicity}

Each chain $\A\in \eng$ is associated to a unique Bratteli diagram, denoted $B(\A)$, whose edges at the $r$th level are determined by the associated partial multiplicity matrix $X_r$. 
A \emph{directed subdiagram} $D$ of $B(\A)$ is a subdiagram such that if $(r,i)\in D$ and there is an edge from $(r,i)$ to $(r+1,j)$ then $(r+1,j)\in D$. A \emph{hereditary subdiagram} $D$ is one where for each vertex $(r,i)$, if $(r+1,j)\in D$ for all $j$ such that there 
is an edge from $(r,i)$ to $(r+1,j)$, then $(r,i)\in D$. 
We say there is a \emph{path} from $(r,i)$ to $(r+h,j)$ if there exists a sequence of adjacent vertices
\begin{equation*}
(r,i)=(r,{i_0})\to (r+1,{i_1})\to\cdots\to (r+h,{i_h})=(r+h,j).
\end{equation*}
Two paths containing the same vertices are considered distinct if their edges have different weights.

Since Theorem \ref{Theorem:InjHoms} ensures that almost every chain in $\eng$ gives rise to an AF algebra, we are in essence studying random AF algebras drawn from the ensemble $\eng$.
The ideals of each $\A \in \eng$ are in bijection with the directed, hereditary sets of $B(\A)$ \cite[Thm~III.4.2]{davidson}.
Therefore, $\A$ is simple if and only if, for each vertex $(r,i)$, there exists an $R>r$ such that for all $1\leq j\leq k$, there is a path from $(r,i)$ to $(R,j)$. This is equivalent to saying that for any $r$, if one removes the first $r$ levels of $B(\A)$, the resulting Bratteli diagram is connected.

\begin{example}
In the diagram below, we have suppressed the weights of the edges and the dimensions of the vertices. Assume all levels not shown are the same as the last two levels (modulo weights). Then the associated algebra is simple.
\tikzcdset{row sep/normal=0.4cm}
\begin{equation*}\begin{tikzcd}[cramped]
	\bullet & \bullet & \bullet & \bullet & \bullet & \bullet & \bullet & \bullet & \cdots \\
	\bullet & \bullet & \bullet & \bullet & \bullet & \bullet & \bullet & \bullet & \cdots \\
	\bullet & \bullet & \bullet & \bullet & \bullet & \bullet & \bullet & \bullet & \cdots
	\arrow[from=1-1, to=1-2]
	\arrow[from=1-2, to=1-3]
	\arrow[from=1-3, to=1-4]
	\arrow[from=1-4, to=1-5]
	\arrow[from=1-4, to=2-5]
	\arrow[from=1-5, to=2-6]
	\arrow[from=1-6, to=1-7]
	\arrow[from=1-6, to=2-7]
	\arrow[from=1-7, to=1-8]
	\arrow[from=1-7, to=2-8]
	\arrow[from=1-8, to=1-9]
	\arrow[from=1-8, to=2-9]
	\arrow[from=2-1, to=1-2]
	\arrow[from=2-1, to=2-2]
	\arrow[from=2-2, to=2-3]
	\arrow[from=2-3, to=2-4]
	\arrow[from=2-3, to=3-4]
	\arrow[from=2-4, to=2-5]
	\arrow[from=2-4, to=3-5]
	\arrow[from=2-5, to=1-6]
	\arrow[from=2-6, to=2-7]
	\arrow[from=2-6, to=3-7]
	\arrow[from=2-7, to=1-8]
	\arrow[from=2-7, to=2-8]
	\arrow[from=2-7, to=3-8]
	\arrow[from=2-8, to=1-9]
	\arrow[from=2-8, to=2-9]
	\arrow[from=2-8, to=3-9]
	\arrow[from=3-1, to=3-2]
	\arrow[from=3-2, to=1-3]
	\arrow[from=3-2, to=3-3]
	\arrow[from=3-3, to=3-4]
	\arrow[from=3-4, to=2-5]
	\arrow[from=3-5, to=2-6]
	\arrow[from=3-5, to=3-6]
	\arrow[from=3-6, to=3-7]
	\arrow[from=3-7, to=2-8]
	\arrow[from=3-7, to=3-8]
	\arrow[from=3-8, to=2-9]
        \arrow[from=3-8, to=3-9]
\end{tikzcd}\end{equation*}
\end{example}


\begin{proposition}\label{Proposition:EdgeWeights}
    For each $w \in \Z_{\geq 0}$,
    \begin{equation*}
        \P\Big(x_{ij}^{(r)} = w \Big) \sim
        \frac{(k-1)n_j}{\gamma_r n_i}\bigg(1 - \frac{wn_j}{\gamma_r n_i} \bigg)^{k-2}
    \end{equation*}
    as $r \to \infty$.
    In particular, $\operatorname{mode} x_{ij}^{(r)} \sim 0$.
\end{proposition}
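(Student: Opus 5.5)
By \eqref{eq:HomZZ} and the independence of the rows of $X_r$, the event $x_{ij}^{(r)}=w$ constrains only the $i$th row. Hence
\begin{equation*}
\P\big(x_{ij}^{(r)}=w\big)=\frac{|\{\vec{x}\in\ZZ_S(\gamma_r n_i): x_j=w\}|}{|\ZZ_S(\gamma_r n_i)|}.
\end{equation*}
Fixing $x_j=w$ turns the remaining $k-1$ coordinates into factorizations of $\gamma_r n_i - w n_j$ in the semigroup $S_j=\semigroup{n_h : h\neq j}$. So the numerator is exactly $|\ZZ_{S_j}(\gamma_r n_i-wn_j)|$. The plan is to compute this directly with the cardinality asymptotics \eqref{eq:Cardinal-with-d}, rather than passing through the weak-limit statement of Corollary \ref{Corollary:SemigroupWeak}. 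The weak limit only controls the mass of intervals and cannot see a single integer value $w$.

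Let $d=\gcd(n_1,\ldots,n_k)$ and $d_j=\gcd(n_h:h\neq j)$, so that $d\mid d_j$. The denominator satisfies $|\ZZ_S(\gamma_r n_i)|\sim d(\gamma_r n_i)^{k-1}/((k-1)!\,n_1\cdots n_k)$. For the numerator, apply \eqref{eq:Cardinal-with-d} to $S_j$, which has $k-1$ generators, at the argument $N=\gamma_r n_i-wn_j$. This gives
\begin{equation*}
|\ZZ_{S_j}(N)|\approx \frac{d_j N^{k-2}}{(k-2)!\prod_{h\neq j}n_h}
\end{equation*}
whenever $d_j\mid N$ (and $0$ otherwise). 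The quotient is then
\begin{equation*}
\frac{(k-1)\,n_j\,d_j}{d\,\gamma_r n_i}\Big(1-\frac{wn_j}{\gamma_r n_i}\Big)^{k-2}.
\end{equation*}
When $d_j=d$ (for instance, in the numerical case with $j$'s complement still coprime), this is the stated formula. I would also record that the result is uniform in $w$ once $N\to\infty$. The real content is for fixed $w$, or more generally for $w$ with $\gamma_r n_i-wn_j\to\infty$. For fixed $w$ the factor $(1-wn_j/(\gamma_r n_i))^{k-2}\to1$, and the formula says that every fixed value has asymptotic probability $\sim (k-1)n_j/(\gamma_r n_i)$. This is maximal at $w=0$ since the factor is decreasing in $w$, giving $\operatorname{mode}\sim 0$.

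The main obstacle is the arithmetic degeneracy. If $d_j>d$, the count vanishes for $w$ with $d_j\nmid \gamma_r n_i-wn_j$ and is inflated by $d_j/d$ otherwise. The stated formula then holds only on average over residue classes of $w$, which matches the histogram's irregular mode set in Example \ref{ex:6-9-20}. I would handle this by stating the proof under the assumption $d_j=d$ and remarking on the modification. A second subtlety is $k=2$. There $S_j$ has a single generator $n_h$, the count is $1$ or $0$, and the formula gives $n_j d_h/(d\gamma_r n_i)$ with $d_h=n_h$ handled the same way. This case needs no asymptotics, since \eqref{eq:Cardinal-with-d} for one generator is exact.
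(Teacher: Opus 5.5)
Your argument follows the paper's route: row independence via \eqref{eq:HomZZ}, the reduction of $\P(x_{ij}^{(r)}=w)$ to $|\ZZ_{S'}(\gamma_r n_i-wn_j)|/|\ZZ_S(\gamma_r n_i)|$ with $S'=\semigroup{n_h : h\neq j}$, and then \eqref{eq:Cardinal-with-d} for the numerator and the denominator. The one substantive difference is your gcd bookkeeping, and there you are right and the paper is not. Its proof applies \eqref{eq:Cardinal-with-d} to $S'$ with the constant $d$ of the full semigroup, which is valid only when $d_j=\gcd(n_h : h\neq j)$ equals $d$.

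When $d_j>d$ the proposition is false as stated, not merely unproved. Take $\vec n=(6,9,20)$, $i=1$, $j=3$, $w=1$: the number $6\gamma_r-20$ is never divisible by $d_3=3$, so $\P(x_{13}^{(r)}=1)=0$ for every $r$. Your corrected version is the right statement: an extra factor $d_j/d$ on the admissible $w$, and $0$ elsewhere. Your ``on average'' reading is exact, because $\gcd(d_j,n_j)=d$ makes the admissible $w$ a single residue class modulo $d_j/d$.

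The correction does no damage downstream. Theorem \ref{Theorem:Simple} only needs $\P(x_{ij}^{(r)}=0)=O(\gamma_r^{-1})$. Proposition \ref{prop:prob-p-mid-x} has $n_1=n_2$, so $d_j=d$ there.

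Two of your side remarks need adjusting.
\begin{enumerate}
\item The histogram in Example \ref{ex:6-9-20} is of $x_{11}$, where $d_1=\gcd(9,20)=1=d$, so its irregular mode set is not the gcd degeneracy. It reflects the bounded fluctuation of $|\ZZ_{\semigroup{9,20}}(N)|$, with $N=3000-6w$, between $\lfloor N/180\rfloor$ and $\lfloor N/180\rfloor+1$. This fluctuation is invisible at the level of $\sim$.
\item For the same reason, fixed-$w$ asymptotics cannot locate the actual mode: they give every fixed $w$ the same leading term (Example \ref{ex:7-13-14-15} has mode $3$). The claim ``$\operatorname{mode} x_{ij}^{(r)}\sim 0$'' should be read as $\operatorname{mode} x_{ij}^{(r)}=o(\gamma_r)$. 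Your uniformity-in-$N$ remark is exactly what proves this when $k\geq 3$.
\end{enumerate}
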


\begin{proof}
    Fix $r \geq 1$.  We must count the number of partial multiplicity matrices $X \in \M_k(\Z)$, which correspond to
    $\phi \in \shom(\A_r,\A_{r+1})$, with $x_{ij}=w$.  Suppose
    that $X$ is such a matrix and let $X$ be partitioned according to its columns and rows as in \eqref{eq:RowsColumns}, with columns $\vec{c}_j$ and rows $\vec{r}_i\cdot $ for $1 \leq i,j \leq k$. 
    Suppose that $x_{ij}=w$ and let
    $S'=\semigroup{ n_1,\ldots, n_{j-1}, n_{j+1},\ldots,n_k}$. Then
    \begin{align*}
        &|\{X\in \M_k(\Z_{\geq 0}) : X\vec{n}=\gamma_r\vec{n},\, x_{ij}=w\} |\\
        &\qquad=|\{\vec{r}_i\in \Z_{\geq 0}^k: \vec{r}_i\cdot \vec{e}_j=w,\, \vec{r}_i\cdot \vec{n}=n_i\gamma_r\}|\cdot \prod_{\substack{h=1\\h\neq i}}^k|\{\vec{r}\in \Z_{\geq 0}^k: \vec{r}\cdot\vec{n}=\gamma_r n_h\}|\\
        &\qquad=|\ZZ_{S'}(\gamma_rn_i-wn_j)|\cdot \prod_{\substack{h=1\\h\neq i}}^k |\ZZ_S(\gamma_rn_h)|.
    \end{align*}
    Then \eqref{eq:Cardinal-with-d}, \eqref{eq:Z-gcd-change}, and \eqref{eq:HomZZ} ensure that
    \begin{align*}
        &\frac{|\{X\in \M_k(\Z_{\geq0}): X\vec{n}=\gamma_r\vec{n},\, x_{ij}=w\} |}{|\shom(\A_r,\A_{r+1})|} 
         = \frac{|\ZZ_{S'}(\gamma_rn_i-wn_j)|}{|\ZZ_S(\gamma_rn_i)|} \\
        &\qquad \sim \frac{d(\gamma_rn_i-wn_j)^{k-2}}{(k-2)!(n_1\cdots n_{j-1}n_{j+1}\cdots n_k)} \cdot
        \frac{(k-1)!(n_1n_2\cdots n_k)}{d(\gamma_r n_i)^{k-1}}&&\text{by \eqref{eq:Cardinal-with-d}}\\
        &\qquad = \frac{(k-1) (\gamma_r n_i)^{k-2}\big(1 - \frac{wn_j}{\gamma_r n_i} \big)^{k-2} n_j}{(\gamma_r n_i)^{k-1}}\\
        &\qquad = \frac{(k-1)n_j}{\gamma_r n_i}\bigg(1 - \frac{wn_j}{\gamma_r n_i} \bigg)^{k-2} . \qedhere
    \end{align*}
\end{proof}

\begin{remark}
    Since $x_{ij}n_j \leq \gamma_r n_i$,
    the possible values of $x_{ij}^{(r)}$ are $0,1,2,\ldots, \big\lfloor \frac{\gamma_r n_i}{n_j} \big\rfloor$.
    Let $w = t\big\lfloor \frac{\gamma_r n_i}{n_j} \big\rfloor$ and rescale so that $t \in [0,1]$.
    After rescaling, the implicitly defined singular probability measures of the previous proposition converge weakly
    to the absolutely continuous probability measure on $[0,1]$ with density $(k-1)( 1-t)^{k-2}$; that is, the beta distribution with parameters $\alpha = 1$ and $\beta = k-1$.
\end{remark}

\begin{example}\label{ex:edge-weights}
    Let $S=\langle 6,14,20,23,24,30,31\rangle$ and $\vec{g}=(1,2,8,64,\dots)$. 
    Proposition \ref{Proposition:Moments} says that $\mu_1(x_{3,1}^{(4)})$ is approximately $ 160/21\approx 7.619047$, close to the actual value $\mu_1(x_{3,1}^{(4)})=7.60967$.
    Similarly, the probability $\P\big(x_{3,1}^{(4)} = 0 \big)$ that there is no edge between $g_4\cdot 6$ and $g_5\cdot 20$, is approximately $9/80=11.25$; see Figure \ref{fig:edge-weights}.
\end{example}

\begin{figure}
    \centering
    \includegraphics[width=0.75\linewidth]{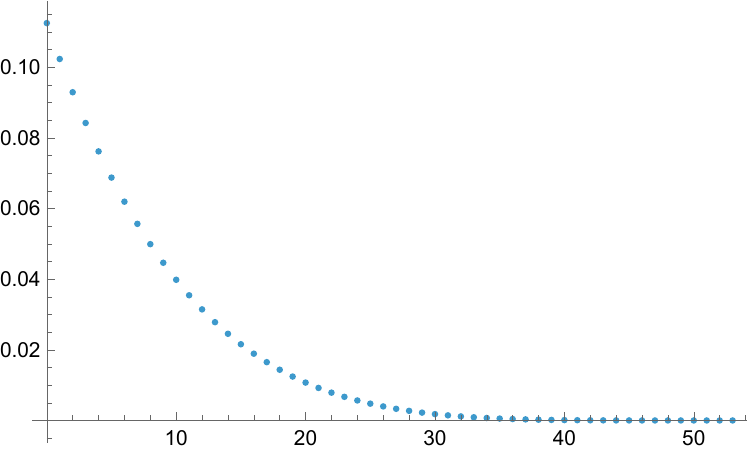}
    \caption{Probability distribution of $x_{3,1}^{(4)}$ from Example \ref{ex:edge-weights}.}
    \label{fig:edge-weights}
\end{figure}

The next result tells us that, under certain lenient circumstances, most algebras in a given ensemble are simple.

\begin{theorem}\label{Theorem:Simple}
    Suppose that $\sum_{r=1}^{\infty} \gamma_r^{-1} < \infty$. 
    Then a randomly chosen algebra in $\eng$ is simple with probability $1$.
\end{theorem}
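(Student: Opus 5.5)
Use Borel--Cantelli exactly as in Theorem \ref{Theorem:InjHoms}, but with a stronger event. Let $F_r$ be the event that the partial-multiplicity matrix $X_r$ has at least one zero entry, that is, $x^{(r)}_{ij}=0$ for some pair $(i,j)$. If $F_r$ fails, then every vertex at level $r$ is joined by an edge to every vertex at level $r+1$.

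First I would bound $\P(F_r)$. Proposition \ref{Proposition:EdgeWeights} with $w=0$ gives
\begin{equation*}
\P\big(x_{ij}^{(r)}=0\big)\sim \frac{(k-1)n_j}{\gamma_r n_i}
\end{equation*}
for each pair $(i,j)$. Apply a union bound over the $k^2$ pairs and then Lemma \ref{Lemma:AsympototicSum}; the asymptotic equivalents are constant multiples of $\gamma_r^{-1}$, so the lemma applies. This yields
\begin{equation*}
\P(F_r)\lesssim \frac{c}{\gamma_r}, \qquad c=(k-1)\sum_{i,j}\frac{n_j}{n_i}.
\end{equation*}
Consequently there is an $r_0$ with $\P(F_r)\le 2c/\gamma_r$ for all $r\ge r_0$. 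The hypothesis $\sum_r \gamma_r^{-1}<\infty$ then gives $\sum_r \P(F_r)<\infty$. By the Borel--Cantelli lemma, almost surely only finitely many $F_r$ occur. So with probability $1$ there is an $R_0$ such that every $X_r$ with $r\ge R_0$ has all entries positive. This is also where injectivity comes for free, since no generator can vanish at such a level.

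Next, fix such a chain and deduce simplicity from the criterion recalled above. We need that for every vertex $(r,i)$ there is an $R>r$ such that $(r,i)$ has a path to every $(R,j)$. The key observation is that every vertex at every level has at least one outgoing edge whenever $\phi_r$ is injective. A vertex at a level $r<R_0$ may have no outgoing edge if its generator vanishes, but then it lies in no path at all. I need to treat this carefully. Since every ideal is determined by the tail of the diagram, I would remove the first $R_0$ levels, which does not change the isomorphism class by the first move. After the removal, for any $(r,i)$ with $r\ge R_0$ there is an edge to every $(r+1,j)$, so $R=r+1$ works. The truncated diagram is therefore connected in the required sense, and $\A$ is simple.

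The main obstacle is rigor in the asymptotic step. Proposition \ref{Proposition:EdgeWeights} is only an asymptotic equivalence, so I must justify the passage to an honest inequality $\P(F_r)\le C/\gamma_r$ for large $r$. This follows from the definition of $\lesssim$, but it requires $\gamma_r\to\infty$, and that is guaranteed by the summability hypothesis. A second point to handle is the gcd $d$: it enters through \eqref{eq:Cardinal-with-d}, but the counts always involve $\gamma_r n_i$, which is divisible by $d$, so the formulas apply. Finally, I would note that the removal of initial levels is justified by the moves on Bratteli diagrams, so simplicity depends only on the tail.
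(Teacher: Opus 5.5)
Your proposal is correct and matches the paper's proof: the event that level $r$ fails to be complete bipartite, the bound $\P \lesssim c/\gamma_r$ from a union bound with Proposition~\ref{Proposition:EdgeWeights} at $w=0$ and Lemma~\ref{Lemma:AsympototicSum}, Borel--Cantelli, and discarding finitely many initial levels. One small correction to your gcd remark: the count $|\ZZ_{S'}(\gamma_r n_i)|$ depends on the gcd of the generators other than $n_j$, which can exceed $d$ and need not divide $\gamma_r n_i$, so the constant can change by a bounded factor (or the probability can be $0$); the $O(\gamma_r^{-1})$ bound you actually need still holds.
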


\begin{proof}
    Consider the $r$th level of the Bratteli diagram associated to a random element of $\eng$.    
    Proposition \ref{Proposition:EdgeWeights} with $w=0$ implies that the asymptotic probability that there is no edge from $n_j$ to $n_i$ at the $r$th level is
    \begin{equation}\label{eq:nonzero-inj}
    \frac{n_j(k-1)}{\gamma_r n_i}.
    \end{equation}
    Let $E_r$ denote the event that there exist $1 \leq i,j \leq k$ such that $(r,n_j)$ is not adjacent to $(r+1,n_i)$, that is, the event that the Bratelli diagram at the $r$th level (ignoring the weights) is not a complete bipartite graph on two sets of $k$ vertices. As $r\to\infty$, \eqref{eq:nonzero-inj} ensures that
    \begin{equation*}
        \P(E_r) \lesssim \sum_{i,j=1}^k \frac{n_j(k-1)}{\gamma_r n_i} = \frac{c}{\gamma_r}.
    \end{equation*}    
    Since $\sum_{r=1}^{\infty} \P(E_r)$ converges, the Borel--Cantelli lemma ensures that the probability that $E_r$ occurs infinitely many times is zero.  Telescoping finitely many initial levels in a Bratteli diagram does not change the isomorphism class of an AF algebra, so with probability $1$, a randomly selected algebra in $\eng$ is simple.
\end{proof}

\begin{remark}
The product of \eqref{eq:nonzero-inj} over all generators
yields the probability 
\begin{equation*}
\frac{(k-1)^kn_j^k}{(n_1 n_2\cdots n_k)\gamma_r^k}
\end{equation*}
that $n_j$ vanishes at the $r$th level.  This
agrees with Proposition \ref{Proposition:AllVanish} when $M=\{n_j\}$.    
\end{remark}

\begin{remark}
    If $\sum_{r=1}^{\infty} \P(E_r)$ diverges, then the (second) Borel--Cantelli lemma says that with probability $1$,
    infinitely many of the $E_r$ occur.  However, this does not imply that the resulting AF algebras are not simple with
    probability $1$.  For example, the Bratelli diagram
    \begin{equation*}
        \begin{tikzcd}[cramped]
        	\bullet & \bullet & \bullet & \bullet & \bullet  \cdots \\
        	\bullet & \bullet & \bullet & \bullet & \bullet  \cdots
        	\arrow[from=1-1, to=2-2]
        	\arrow[from=1-2, to=1-3]
        	\arrow[from=1-2, to=2-3]
        	\arrow[from=1-3, to=2-4]
        	\arrow[from=1-4, to=1-5]
        	\arrow[from=1-4, to=2-5]
        	\arrow[from=2-1, to=1-2]
        	\arrow[from=2-1, to=2-2]
        	\arrow[from=2-2, to=1-3]
        	\arrow[from=2-2, to=2-3]
        	\arrow[from=2-3, to=1-4]
        	\arrow[from=2-3, to=2-4]
        	\arrow[from=2-4, to=1-5]
        	\arrow[from=2-4, to=2-5]
        \end{tikzcd}        
    \end{equation*}
    yields a simple AF algebra, even though $\P(E_r)=1$ for infinitely many $r$.
\end{remark}

Theorem \ref{Theorem:Simple} and the previous remark suggest an open problem
(Problem \ref{Problem:Simple})

When $\gamma_r^{-1}$ is summable, each ensemble $\eng$ consists almost entirely of simple AF algebras.
We would like to know about the properties of these algebras.  For example, are there infinitely many non-isomorphic algebras in each $\eng$?  We tackle this question in Section \ref{Section:K-Theory}.  Before that, however, we can identify a finite number of non-isomorphic algebras in $\eng$ by studying their ideals.  As Theorem \ref{Theorem:Simple} indicates, having any nontrivial ideals at all is rare.
    
\section{Ideals and UHF algebras}\label{Section:Ideals}

We now describe the ideals of chains in $\eng$. Although almost all rapidly growing AF algebras are simple when $\gamma_r^{-1}$ is summable (Theorem \ref{Theorem:Simple}), even in this context one can construct algebras with different numbers of ideals. This is an important step, since it establishes that $\eng$ contains many non-isomorphic algebras.  It would be silly indeed if every algebra in $\eng$ were isomorphic!  In what follows, $\cong$ denotes the (unital) *-isomorphisms of C*-algebras.

\begin{proposition}\label{prop:UHF-algs-and-simple}
    Suppose that $\gcd(n_1,n_2,\ldots,n_k)=1$. For all $M\subseteq \{1,2,\ldots, k\}$, there exists an $\A\in \eng$
    such that
    \begin{equation*}
    \A\cong \mathcal{V}\oplus \bigoplus_{j\in M}\mathcal{N}_j,
    \end{equation*}
    in which $\mathcal{V}$ is a simple AF-algebra and, for each $j\in \{1,2,\ldots,k\}$, $\mathcal{N}_j$ denotes the UHF algebra with diagram
    \begin{equation*}
    n_jg_1 \overset{\gamma_1}{\longrightarrow} n_jg_2
    \overset{\gamma_2}{\longrightarrow} n_jg_3 \overset{\gamma_3}{\longrightarrow} \cdots
    \end{equation*}
\end{proposition}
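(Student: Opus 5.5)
We build an explicit chain in $\eng$ whose Bratteli diagram splits into disjoint pieces. The generators indexed by $M$ are \emph{isolated}: each $j\in M$ only feeds into itself. The generators indexed by $M^c=\{1,\ldots,k\}\setminus M$ form a single, eventually complete, connected component. Concretely, for $j\in M$ we set the $j$th row and the $j$th column of every $X_r$ to $\gamma_r\vec{e}_j$. Thus $x^{(r)}_{jj}=\gamma_r$, and $x^{(r)}_{ij}=x^{(r)}_{ji}=0$ for $i\neq j$. This row satisfies $\vec{r}_j\cdot\vec{n}=\gamma_r n_j$ automatically. For the rows $i\in M^c$ we need vectors $\vec{r}_i\in\Z_{\geq 0}^k$ supported on $M^c$ with $\vec{r}_i\cdot\vec{n}=\gamma_r n_i$. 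At least for all large $r$, we want every entry $x^{(r)}_{i\ell}$ with $i,\ell\in M^c$ to be positive.

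The main obstacle is this existence step. We need $\gamma_r n_i - \sum_{\ell\in M^c} n_\ell$ to be representable in the semigroup $S''=\semigroup{n_\ell:\ell\in M^c}$. If $d''=\gcd(n_\ell:\ell\in M^c)$, then $d''$ divides $\gamma_r n_i - \sum_{\ell} n_\ell$, because $i\in M^c$. So the number lies in $d''\Z$. By the Frobenius-number discussion in Subsection~\ref{Subsection:Numerical} applied to $S''/d''$, it lies in $S''$ once $\gamma_r n_i$ exceeds $\sum_\ell n_\ell + d''F(S''/d'')$. Since $\gamma_r\to\infty$ (it is a strictly increasing integer sequence, or at least unbounded, by \eqref{eq:Rapid}), this holds for all $r\geq r_0$. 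For the finitely many $r<r_0$ we choose any valid rows supported on $M^c$, for instance $\vec{r}_i=\gamma_r\vec{e}_i$. We then telescope or remove these initial levels, which does not change the isomorphism class. The hypothesis $\gcd(n_1,\ldots,n_k)=1$ is not essential for this step, and it is harmless.

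We also treat two degenerate cases. If $M=\{1,\ldots,k\}$, then $\mathcal{V}$ is absent, or should be read as $0$; the statement presumably intends $\mathcal{V}$ possibly zero, and the construction gives $\bigoplus_j\mathcal{N}_j$ directly. If $|M^c|=1$, say $M^c=\{i\}$, we take $x_{ii}=\gamma_r$, and $\mathcal{V}=\mathcal{N}_i$ is a UHF algebra, hence simple.

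Next we identify the limit. Since the Bratteli diagram is a disjoint union of the subdiagrams on the vertex sets $\{(r,j)\}_r$ for $j\in M$ and on $\{(r,i):i\in M^c\}_r$, the direct sequence decomposes as a direct sum of direct sequences. So $\A\cong\mathcal{V}\oplus\bigoplus_{j\in M}\mathcal{N}_j$. Each isolated piece is exactly $n_jg_1\xrightarrow{\gamma_1}n_jg_2\xrightarrow{\gamma_2}\cdots$, which is $\mathcal{N}_j$. The remaining piece $\mathcal{V}$ is the AF algebra of the subdiagram on $M^c$. For $r\geq r_0$ every vertex at level $r$ is joined to every vertex at level $r+1$, so after removing the first $r_0$ levels the diagram is connected in the strong sense of Section~\ref{Section:Simplicity}. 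Every vertex reaches all vertices at the next level, so there are no nontrivial directed hereditary subsets, and $\mathcal{V}$ is simple by \cite[Thm~III.4.2]{davidson}. Finally, all maps are unital and injective for $r\geq r_0$, since no generator vanishes, so the chain indeed lies in $\eng$ and gives a genuine AF algebra.
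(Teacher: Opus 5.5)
Your proposal is correct and is essentially the paper's construction: weight-$\gamma_r$ self-edges for each $j\in M$ (and for every generator at the finitely many early levels), then weights $s_{j,i}+1$ obtained from a Frobenius-number bound, which make the $M^c$ block complete bipartite at every large level. Your use of $d''=\gcd(n_\ell:\ell\in M^c)$ and $F(S''/d'')$ is a genuine small improvement, since the paper invokes the Frobenius number of $\langle n_i\rangle_{i\in M^c}$ even though that subsemigroup need not be numerical when only $\gcd(n_1,\ldots,n_k)=1$ is assumed.
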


\begin{proof}
    Given $M$, construct the Bratteli diagram of $\A$ as follows. At each level, if $j\in M$, put an edge with weight $\gamma_r$ from $(r,j)$ to $(r+1,j)$.
    Let $p=|M^c|$. For all $i\in M^c$, there exists some $N\in\Z_{\geq 1}$ such that
    \begin{equation*}
    \gamma_r\cdot \min\{n_i:  i\in M^c\}-\sum_{i\in M^c}n_i
    \end{equation*}
    is greater than the Frobenius number of $\langle n_i \rangle_{i\in M^c}$ for all $r\geq N$. For $r< N$, at each level put an edge with weight $\gamma_r$ from $(r,i)$ to $(r+1,i)$. If $r\geq N$, for any $j\in M^c$, there exist $s_{j,1},s_{j,2},\ldots,s_{j,p}\in \Z_{\geq 0}$ such that
    \begin{equation*}
    \sum_{i\in M^c}s_{j,i}n_i=\gamma_rn_j-\sum_{i\in M^c}n_i
    \end{equation*}
    or, equivalently, $\sum_{i\in M^c}(s_{j,i}+1)n_i=\gamma_rn_j.$

    Put an edge with weight $s_{j,i}+1$ from $(r,i)$ to $(r+1,j)$ for all $i,j\in M^c$. Then the graph with vertices $\{(r,i),(r+1,i) :  i\in M^c\}$ is a complete bipartite graph on two sets of $p$ vertices for all $r$. Since the diagram is connected for $r\geq N$, it yields a simple subalgebra $\mathcal{V}$.

    We see that the subalgebra of $\A$ associated to the subdiagram with vertices $\{(r,i) :  i\in M^c,r\in \N\}$ is an ideal. Furthermore, it is clear that for fixed $j\in M$ the subalgebra associated to the subdiagram with vertices $\{(r,j) :  r\in \N\}$ is an ideal isomorphic to the indicated UHF algebra $\mathcal{N}_j$.
\end{proof}

\begin{figure}\small
    \centering
\[\begin{tikzcd}[cramped]
	3 & {2\cdot 3} & {3!\cdot 3} & {4!\cdot 3} && {5!\cdot 3} && {6!\cdot 3} && {7!\cdot 3} \\
	4 & {2\cdot 4} & {3!\cdot 4} & {4!\cdot 4} && {5!\cdot 4} && {6!\cdot 4} && {7!\cdot 4} \\
	5 & {2\cdot 5} & {3!\cdot 5} & {4!\cdot 5} && {5!\cdot 5} && {6!\cdot 5} && {7!\cdot 5} \\
	7 & {2\cdot 7} & {3!\cdot 7} & {4!\cdot 7} && {5!\cdot 7} && {6!\cdot 7} && {7!\cdot 7}
	\arrow["2", from=1-1, to=1-2]
	\arrow["3", from=1-2, to=1-3]
	\arrow["4", from=1-3, to=1-4]
	\arrow["2"{description}, from=1-4, to=1-6]
	\arrow["2"{description, pos=0.3}, from=1-4, to=2-6]
	\arrow[from=1-4, to=3-6]
	\arrow["3"{description}, from=1-6, to=1-8]
	\arrow["2"{description, pos=0.2}, from=1-6, to=2-8]
	\arrow["3"{description, pos=0.4}, from=1-6, to=3-8]
	\arrow[from=1-8, to=1-10]
	\arrow["3"{description, pos=0.2}, from=1-8, to=2-10]
	\arrow["2"{description, pos=0.6}, from=1-8, to=3-10]
	\arrow["2", from=2-1, to=2-2]
	\arrow["3", from=2-2, to=2-3]
	\arrow["4", from=2-3, to=2-4]
	\arrow[from=2-4, to=1-6]
	\arrow[from=2-4, to=2-6]
	\arrow["3"{description, pos=0.8}, from=2-4, to=3-6]
	\arrow[from=2-6, to=1-8]
	\arrow["2"{description, pos=0.2}, from=2-6, to=2-8]
	\arrow["4"{description, pos=0.7}, from=2-6, to=3-8]
	\arrow["2"{description, pos=0.8}, from=2-8, to=1-10]
	\arrow[from=2-8, to=2-10]
	\arrow[from=2-8, to=3-10]
	\arrow["2", from=3-1, to=3-2]
	\arrow["3", from=3-2, to=3-3]
	\arrow["4", from=3-3, to=3-4]
	\arrow[from=3-4, to=1-6]
	\arrow["2"{description, pos=0.2}, from=3-4, to=2-6]
	\arrow["2"{description}, from=3-4, to=3-6]
	\arrow[from=3-6, to=1-8]
	\arrow["2"{description, pos=0.3}, from=3-6, to=2-8]
	\arrow[from=3-6, to=3-8]
	\arrow["2"{description, pos=0.1}, from=3-8, to=1-10]
	\arrow["3"{description, pos=0.3}, from=3-8, to=2-10]
	\arrow["5"{description}, from=3-8, to=3-10]
	\arrow["2", from=4-1, to=4-2]
	\arrow["3", from=4-2, to=4-3]
	\arrow["4", from=4-3, to=4-4]
	\arrow["5", from=4-4, to=4-6]
	\arrow["6", from=4-6, to=4-8]
	\arrow["7", from=4-8, to=4-10]
\end{tikzcd}\]
    \caption{An illustration of the construction in Proposition \ref{prop:UHF-algs-and-simple} for $\langle 3,4,5,7\rangle$, in which $\{n_m: m\in M\}=\{7\}$.}
    \label{fig:UHF-algs-and-simple-proof}
\end{figure}

Figure \ref{fig:UHF-algs-and-simple-proof} concerns an example of this construction.

\begin{corollary}\label{cor:k-nonisom-algs}
    The set $\eng$ contains at least $k$ non-isomorphic algebras. 
\end{corollary}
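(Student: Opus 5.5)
We take the $k$ algebras supplied by Proposition~\ref{prop:UHF-algs-and-simple} with $|M| = 0,1,\ldots,k-1$, and distinguish them by counting their closed two-sided ideals. The number of ideals is an isomorphism invariant.

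First we reduce to $\gcd(n_1,\ldots,n_k)=1$. Put $d=\gcd(n_1,\ldots,n_k)$. A matrix $X\in\M_k(\Z_{\geq 0})$ satisfies $X\vec{n}=\gamma_r\vec{n}$ if and only if it satisfies $X(\vec{n}/d)=\gamma_r(\vec{n}/d)$. Hence $\eng$ and $\E(\vec{n}/d,\vec{g})$ have literally the same sets of partial-multiplicity matrices at every level. The corresponding limit algebras differ only by the multiplicity $d$ in every vertex, so each algebra of the first ensemble is the tensor product with $\M_d$ of the matching algebra of the second. Tensoring with $\M_d$ preserves the lattice of ideals. Therefore it suffices to exhibit $k$ algebras in $\E(\vec{n}/d,\vec{g})$ with pairwise distinct numbers of ideals, and we may assume $d=1$. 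Alternatively, one can observe that the proof of Proposition~\ref{prop:UHF-algs-and-simple} only uses the Frobenius number of $\semigroup{n_i/d'}$ after dividing by the relevant gcd, and so adapts directly.

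Now fix $h\in\{0,1,\ldots,k-1\}$ and choose any $M\subseteq\{1,\ldots,k\}$ with $|M|=h$. Then $M^c\neq\varnothing$. Proposition~\ref{prop:UHF-algs-and-simple} gives $\A^{(h)}\in\eng$ with
\begin{equation*}
\A^{(h)}\cong\mathcal{V}\oplus\bigoplus_{j\in M}\mathcal{N}_j .
\end{equation*}
Here $\mathcal{V}$ is a nonzero simple AF algebra. When $|M^c|=1$, $\mathcal{V}$ is itself the UHF algebra on the single remaining row, which is still simple. Each $\mathcal{N}_j$ is UHF and hence simple.

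The key step is the following standard fact. If $\mathcal{B}=\mathcal{B}_0\oplus\cdots\oplus\mathcal{B}_h$ is a finite direct sum of nonzero simple C*-algebras, then every closed ideal of $\mathcal{B}$ has the form $\bigoplus_{i\in T}\mathcal{B}_i$ for some $T\subseteq\{0,\ldots,h\}$. To see this, take an ideal $J$ and let $p_i$ be the central projection onto the $i$th summand (the summands here are unital). Then $p_iJ$ is an ideal of $\mathcal{B}_i$, so it is either $0$ or $\mathcal{B}_i$, and $J=\bigoplus_i p_iJ$. Consequently $\A^{(h)}$ has exactly $2^{h+1}$ ideals. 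The same count can be read off from the directed hereditary subdiagrams of the constructed Bratteli diagram, using \cite[Thm~III.4.2]{davidson}.

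Since $h\mapsto 2^{h+1}$ is injective, the algebras $\A^{(0)},\ldots,\A^{(k-1)}$ are pairwise non-isomorphic. This gives at least $k$ non-isomorphic algebras in $\eng$. The only delicate point is the gcd reduction above; the ideal count itself is routine.
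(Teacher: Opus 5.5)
Your proof is correct and follows the paper's argument. Both reduce to $\gcd=1$, apply Proposition~\ref{prop:UHF-algs-and-simple} with $|M|=0,1,\ldots,k-1$, and tell the resulting algebras apart by their ideal structure. Your gcd reduction (the admissible multiplicity matrices for $\mathbf{n}$ and $\mathbf{n}/d$ literally coincide) is a cleaner version of the paper's device of working in $\mathcal{E}(\mathbf{n}/d,\mathbf{g}')$ with a prepended level $dI_k$ that is then removed. Your explicit count of $2^{h+1}$ ideals makes precise the paper's phrase ``exactly $m$ ideals.''
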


\begin{proof}
Let $d=\gcd(n_1,n_2,\ldots,n_k)$, and if $\mathbf{g}=(g_1,g_2,\ldots)$ let $\mathbf{g}'=(d,g_1,g_2,\ldots)$. For all $m\in \{1,2,\ldots,k\}$, Proposition \ref{prop:UHF-algs-and-simple} provides an algebra in $\E(\n/d,\mathbf{g}')$ with exactly $m$ ideals. We may assume the first partial-multiplicity matrix of this algebra is $dI_k$. Removing the first level, we obtain an algebra in $\eng$ with exactly $m$ ideals. Thus, there are at least $k$ non-isomorphic algebras in $\eng$.
\end{proof}

More generally, the elements of $\eng$ often have interesting subalgebras. For example, the growth sequence poses constraints on embeddings of UHF algebras in algebras in $\eng$. This suggests an  open problem (Problem \ref{Problem:UHF})

\begin{figure}
\[\begin{tikzcd}
	2 & 4 & 32 & \dots \\
	7 & 14 & 112 & \dots
	\arrow["2", color=red, from=1-1, to=1-2]
	\arrow["{2^2}", color=red, from=1-2, to=1-3]
	\arrow["{2^4}", color=red, from=1-3, to=1-4]
	\arrow["2", from=2-1, to=2-2]
	\arrow["{2^2}", from=2-2, to=2-3]
	\arrow["{2^4}", from=2-3, to=2-4]
\end{tikzcd}\]
    \caption{The algebra $\A\in \eng$ shown above is the direct sum of the CAR algebra and the UHF algebra with generalized integer $7\cdot 2^{\infty}$. We can recover the CAR algebra from $\A$ by following the red path.}
    \label{fig:car-alg}
\end{figure}

\begin{example}
If $g_r=2^{2^{(r-1)}-1}$ and $\n=\langle2,7\rangle$, we can recover the CAR algebra \cite[Ex.~III.2.4]{davidson}; see Figure~\ref{fig:car-alg}.    
This leads to another interesting question: for which $\eng$ can the CAR algebra be embedded?
\end{example}

In general, ideals are not a fine enough tool to distinguish non-isomorphic algebras in $\eng$. In the next section, we use K-theory to show that, in many circumstances, $\eng$ contains uncountably many non-isomorphic algebras.

\section{Isomorphism Classes of Families of AF Algebras}\label{Section:K-Theory}

In this section, we determine the K-theory in the sense of Elliott \cite{Elliott76} for some classes of AF algebras  arising from numerical semigroups. We use this to show that for certain $(\n,\bg)$, the ensemble $\eng$ contains uncountably many isomorphism classes of AF algebras. In fact, our argument shows that there exist uncountably many non-Morita equivalent AF algebras for these particular kinds of ensembles. We suspect that this holds more generally, but relaxing the conditions on $(\n,\bg)$ makes calculating the K-theory more difficult.


The simplest ensembles of rapidly growing AF algebras arise when $k=2$ and $n_1=n_2 = n$.  
In this setting, we have the following result that suggests the richness of the ensembles $\E(\twovector{n}{n},\bg)$.

\begin{theorem}\label{thm:inf-identical-gens}
    For each $n\in \N$ and growth sequence $\bg$, the ensemble $\E(\twovector{n}{n},\bg)$ contains uncountably many isomorphism classes of AF algebras.
\end{theorem}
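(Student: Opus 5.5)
The plan is to separate uncountably many algebras in $\E(\twovector{n}{n},\bg)$ by an isomorphism invariant read off from Elliott's $K_0$: the subgroup of infinitesimals of the dimension group. That subgroup will turn out to be a torsion-free group of rank one, and such groups carry a \emph{type}, which can take uncountably many values.

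\textbf{Step 1 (choice of matrices).} With $k=2$ and $n_1=n_2=n$, condition \eqref{eq:Xngn} says exactly that $X_r\in\M_2(\Z_{\geq0})$ has both row sums equal to $\gamma_r$. I restrict to the symmetric choices
\begin{equation*}
X_r=\begin{bmatrix} a_r & \gamma_r-a_r\\ \gamma_r-a_r & a_r\end{bmatrix},\qquad 0\leq a_r\leq\gamma_r .
\end{equation*}
These have eigenvector $(1,1)^{\top}$ with eigenvalue $\gamma_r$, and eigenvector $(1,-1)^{\top}$ with eigenvalue $\lambda_r=2a_r-\gamma_r$. Thus $\lambda_r$ can be any integer in $[-\gamma_r,\gamma_r]$ with the parity of $\gamma_r$. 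By \eqref{eq:Rapid}, $\gamma_r\to\infty$.

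Fix an infinite set of odd primes $Q$. For each subset $P\subseteq Q$, enumerate $P$ with every element repeated infinitely often. At level $r$ let $\lambda_r$ be the current prime of $P$, times $2$ if $\gamma_r$ is even. If that prime is too large, i.e.\ $|\lambda_r|>\gamma_r/2$, use $\lambda_r\in\{1,2\}$ instead and postpone the prime. Since $\gamma_r\to\infty$, every prime of $P$ eventually fits, so each $p\in P$ divides infinitely many $\lambda_r$. No odd prime outside $P$ divides any $\lambda_r$. Also $|\lambda_r|\leq\gamma_r/2$ for all large $r$, and $\lambda_r\neq0$.

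\textbf{Step 2 (compute $K_0$ and its infinitesimals).} The ordered group $K_0(\A)$ is the limit of $\Z^2\xrightarrow{X_1^{\top}}\Z^2\xrightarrow{X_2^{\top}}\cdots$, and $X_r^{\top}=X_r$. The lattice $\Z(1,1)\oplus\Z(1,-1)$ has index $2$ in $\Z^2$ and is invariant under each $X_r$. Hence $K_0(\A)$ contains, with index dividing $2$, the group
\begin{equation*}
G_+\oplus G_-,\qquad G_+=\varinjlim(\Z,\cdot\gamma_r),\quad G_-=\varinjlim(\Z,\cdot\lambda_r).
\end{equation*}

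Because $\prod_{s\geq r}|\lambda_s|/\gamma_s=0$, the normalized iterates $X_s\cdots X_r/(\gamma_s\cdots\gamma_r)$ converge to the rank-one projection onto $(1,1)$. Consequently:
\begin{itemize}
\item the algebra is simple;
\item there is a unique state, namely the coefficient in the $(1,1)$ direction;
\item the infinitesimal subgroup $\operatorname{Inf}(K_0(\A))$, i.e.\ the elements killed by all states, is $K_0(\A)\cap(\Q\otimes G_-)$, which contains $G_-$ with index dividing $2$.
\end{itemize}
An isomorphism of AF algebras induces an isomorphism of ordered $K_0$ groups, and more generally a stable isomorphism induces one up to order-isomorphism. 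Either way it carries infinitesimals onto infinitesimals, so the isomorphism class of $\operatorname{Inf}(K_0(\A))$ is an invariant, even of Morita equivalence.

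\textbf{Step 3 (distinguish).} $\operatorname{Inf}$ is torsion-free of rank one and is commensurable with $G_-$. Its type is therefore that of $G_-$: the characteristic of $G_-$ at an odd prime $p$ is $\infty$ exactly when $p\in P$, and $0$ otherwise. Finite-index changes do not alter type, and Baer's classification says rank-one groups are isomorphic if and only if their types agree. So distinct $P\subseteq Q$ give non-isomorphic algebras, and there are $2^{\aleph_0}$ of them. Injectivity is automatic here, since $\lambda_r\neq0$ makes $X_r$ invertible.

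\textbf{Main obstacle.} I expect the delicate step to be Step 2: the precise identification of $\operatorname{Inf}$ inside $K_0$. Two points need care there. First, the index-$2$ discrepancy between $\Z^2$ and the eigenbasis lattice must be controlled; this is why $2$ is excluded from $Q$. Second, one must verify that no non-infinitesimal element sneaks in. I would handle this by computing the unique state explicitly as the limit of the normalized $(1,1)$-coordinate.
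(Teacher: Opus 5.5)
Your argument is correct, but it takes a different route from the paper. Both arguments use the same symmetric matrices, with eigenvalue $\gamma_r$ on $(1,1)$ and $\lambda_r$ (the paper's $q_r$) on $(1,-1)$, together with the index-$2$ eigen-lattice.

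\emph{The paper's route.} It treats $K_0$ purely as an abstract rank-two group. It shows $\Q_{g_\infty}\oplus\Q_{a_\infty}\subseteq K_0\subseteq\Q_{2g_\infty}\oplus\Q_{2a_\infty}$. Then a direct matrix argument at all but finitely many primes (Lemma~\ref{lem:isomorphism-of-direct-sum} and Proposition~\ref{prop:cong-dim-group-implies-ints-equiv}) shows that the unordered pair of equivalence classes $\{g_\infty,a_\infty\}$ is an isomorphism invariant. Since $g_\infty$ is fixed by the ensemble, $a_\infty$ separates the algebras.

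\emph{Your route.} You use the order structure instead. The bound $|\lambda_r|\le\gamma_r/2$ gives simplicity and a unique state. This canonically isolates the rank-one subgroup $\operatorname{Inf}(K_0)$, and Baer's classification of rank-one torsion-free groups finishes the argument.

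\emph{What each approach buys.} Your approach sidesteps the uniqueness-of-decomposition question for rank-two groups, which is the step the paper argues most loosely. The cost is an extra state computation, and your invariant only sees the type of $a_\infty$. The paper's invariant, by contrast, distinguishes the $K_0$ groups even as unordered groups.

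\emph{Parity.} Your construction is more careful on one point. Because $\lambda_r=2p_r-\gamma_r$ must have the parity of $\gamma_r$, the paper's final choice of $q_r$ a prime below $\gamma_r$ is only available as $q_r=2$ when $\gamma_r$ is even. For example, $g_r=2^{(r-1)^2}$ makes every $\gamma_r$ even. Your use of $2p$ in that case, combined with the $\infty$-versus-$0$ heights at the odd primes in your set, makes the uncountability argument clean for every growth sequence.
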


To prove this theorem, we classify the K-theory of some chains in $\E(\twovector{n}{n},\bg)$. To classify the $K_0$ group, we need a definition \cite[Sect.~3.1 and 4.4]{dixmier}.

\begin{definition}\label{def:generalized-integer}
    Let $\P$ denote the set of prime numbers. A \emph{generalized integer} is a function $i:\P\rightarrow \N\cup\{+\infty\}$, usually written as a formal product $\prod_{p\in \P}p^{i(p)}$ \cite[p. 193]{dixmier}. Generalized integers $m=\prod_{p\in \P}p^{i(p)}$ and $n=\prod_{p\in \P}p^{j(p)}$ are \emph{equivalent}, denoted $m\sim n$, if $i(p)=j(p)$ with finitely many exceptions, and if $i(p),j(p)<\infty$ whenever $i(p)\neq j(p)$.
    For each $n=\prod_{p\in \P}p^{j(p)}$, we write $\Q_n$ for the subgroup of $\Q$ consisting of rational numbers whose denominators formally  divide $n$.
\end{definition}

Observe that natural numbers are special cases of generalized integers.

\begin{lemma}\label{lem:isomorphism-of-direct-sum}
    Let $m=\prod_{p\in\P}p^{i(p)}$, $n=\prod_{p\in\P}p^{j(p)}$, $m'=\prod_{p\in\P}p^{i'(p)}$, and $n'=\prod_{p\in\P}p^{j'(p)}$ be generalized integers. 
    Then $\Q_m\oplus \Q_n\cong \Q_{m'}\oplus \Q_{n'}$ if and only if $m\sim m'$ and $n\sim n'$ (or $m\sim n'$ and $n\sim m'$).
\end{lemma}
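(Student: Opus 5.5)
The plan is to reduce the statement to the classical theory of types of torsion-free abelian groups of rank one. Recall that for a torsion-free abelian group $G$ and a nonzero $x\in G$, the \emph{$p$-height} of $x$ is the largest $h\in\N\cup\{\infty\}$ such that $p^h\mid x$ in $G$. The \emph{characteristic} of $x$ is the generalized integer $\prod_{p\in\P}p^{h_p(x)}$, and the \emph{type} $t(x)$ of $x$ is its equivalence class under $\sim$ from Definition~\ref{def:generalized-integer}.

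\textbf{Step 1 (rank one).} First we show that $\Q_m\cong\Q_{m'}$ if and only if $m\sim m'$. In $\Q_m$ the element $1$ has characteristic exactly $m$. Any other nonzero element is $a/b\cdot 1$ with $a,b\in\Z$, and multiplying by $a/b$ changes the $p$-heights only at the finitely many primes dividing $ab$, and only by finite amounts. So every nonzero element of $\Q_m$ has type $[m]$, and type is an isomorphism invariant; this gives ``only if''. Conversely, if $m\sim m'$, choose a rational $q$ that corrects the finitely many finite discrepancies between the exponents of $m$ and $m'$. Then $x\mapsto qx$ maps $\Q_m$ onto $\Q_{m'}$. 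This also gives the ``if'' direction of the lemma: combining the two scalings summand-wise (after swapping the summands if necessary) yields $\Q_m\oplus\Q_n\cong\Q_{m'}\oplus\Q_{n'}$.

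\textbf{Step 2 (types in the sum).} Next we compute the set $T(G)$ of types of nonzero elements of $G=\Q_m\oplus\Q_n$. Heights in a direct sum are minima of the heights of the components, so the types are
\begin{equation*}
t(a,0)=[m],\qquad t(0,b)=[n],\qquad t(a,b)=[m]\wedge[n]\ \text{ for } a,b\neq0.
\end{equation*}
Here $\wedge$ denotes the pointwise minimum of exponents; one checks that this operation is well defined on $\sim$-classes. Hence $T(G)=\{[m],[n],[m]\wedge[n]\}$, and $T(G)$ is an isomorphism invariant of $G$.

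\textbf{Step 3 (recovering the unordered pair).} It remains to show that $T(G)$, partially ordered by the pointwise order on exponents, determines the unordered pair $\{[m],[n]\}$. There are two cases.
\begin{itemize}
\item If $[m]$ and $[n]$ are incomparable, then $T(G)$ has three elements, and its maximal elements are exactly $[m]$ and $[n]$.
\item If they are comparable, say $[m]\le[n]$, then $[m]\wedge[n]=[m]$. So $T(G)=\{[m],[n]\}$, and this set is the pair itself, collapsing to a single element when $[m]=[n]$.
\end{itemize}
In either case the unordered pair is read off from $T(G)$, so an isomorphism $\Q_m\oplus\Q_n\cong\Q_{m'}\oplus\Q_{n'}$ forces $\{[m],[n]\}=\{[m'],[n']\}$. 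This is the ``only if'' direction.

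The main obstacle I expect is the bookkeeping in Step 2. One must verify carefully that $\wedge$ respects $\sim$, including the $\infty$ conventions in Definition~\ref{def:generalized-integer}. One must also check that the comparability case analysis in Step 3 genuinely distinguishes the configurations. For example, the three-element case cannot be mistaken for a comparable pair, because in that case the minimum $[m]\wedge[n]$ lies strictly below two incomparable maximal elements, which never happens in the comparable case. Alternatively, one could cite the Baer--Kulikov--Kaplansky uniqueness theorem for completely decomposable groups, but the direct type argument above is self-contained.
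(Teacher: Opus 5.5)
Your argument is correct, but it takes a different route from the paper's.

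\textbf{The paper's route.} The paper works in coordinates. It fixes an isomorphism and writes $(1,0)\mapsto(a,c)$ and $(0,1)\mapsto(b,d)$, so the inverse is $(ad-bc)^{-1}$ times the adjugate. It then compares $i(p),j(p),i'(p),j'(p)$ directly at every prime $p$ coprime to $a,b,c,d,ad-bc$, treats infinite exponents at the remaining primes separately, and finishes with a case analysis on which entries vanish:
\begin{itemize}
\item if all of $a,b,c,d$ are nonzero, then $m\sim n\sim m'\sim n'$;
\item if $c=0$ or $b=0$, the straight matching holds;
\item if $a=0$ or $d=0$, the crossed matching holds.
\end{itemize}

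\textbf{Your route.} You use an intrinsic invariant, Baer's type set $T(G)$, together with its order. This avoids any analysis of the shape of the isomorphism. All the bookkeeping about finitely many exceptional primes and $\infty$ exponents is absorbed into a single check: that $\wedge$ is well defined on $\sim$-classes.

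\textbf{What each buys.} The paper's computation is more elementary, and the same template is reused in Proposition~\ref{prop:cong-dim-group-implies-ints-equiv}. There $H$ is only sandwiched between $\Q_{g_\infty}\oplus\Q_{a_\infty}$ and $\Q_{2g_\infty}\oplus\Q_{2a_\infty}$, and one simply excludes $p=2$ as well. Your method handles that case equally cleanly. Since $2H\subseteq\Q_{g_\infty}\oplus\Q_{a_\infty}\subseteq H$, heights computed in the two groups agree at odd primes and differ by at most one at $p=2$. Hence $H$ has the same type set as the direct sum, and your Step~3 applies unchanged.

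\textbf{One point of precision in Step~3.} Define the order on types by $s\le t$ iff $s\wedge t=s$. Equivalently, $s\le t$ iff representatives can be chosen with $\chi(p)\le\psi(p)$ at \emph{every} prime. Do not define it by ``$\chi(p)\le\psi(p)$ for almost all $p$.'' That relation is not antisymmetric once infinite exponents occur: $[2^\infty]$ and $[1]$ would each lie below the other. Your implication $[m]\le[n]\Rightarrow[m]\wedge[n]=[m]$ needs the former definition. With it, the three configurations are distinguished simply by $|T(G)|\in\{1,2,3\}$.
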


\begin{proof}
    First suppose that $\Q_m\oplus\Q_n\rightarrow\Q_{m'}\oplus \Q_{n'}$ is an isomorphism. Let $(1,0)\mapsto(a,c)$ and $(0,1)\mapsto(b,d)$. We consider the case  $a,b,c,d\neq 0$. Let $p_0\in P$ be such that all primes $p\geq p_0$ are relatively prime to $a,b,c,d,$ and $ad-bc$. Fix $p\geq p_0$. Since $(1,0)$ is divisible by $p^{i(p)}$, so are $a$ and $c$. Since $a\in \Q_{m'}$ and $c\in \Q_{n'}$, and $p$ is relatively prime to $a$ and $c$, we have $i(p)\leq i'(p),j'(p)$. Similarly, $j(p)\leq i'(p),j'(p)$. The inverse map $\Q_{m'}\oplus \Q_{n'}\rightarrow \Q_m\oplus \Q_n$ is defined by \[(1,0)\mapsto \frac{1}{ad-bc}(d,-c)\] and \[(0,1)\mapsto \frac{1}{ad-bc}(-b,a).\] Therefore, the same reasoning shows that $i'(p),j'(p)\leq i(p),j(p)$. It follows that $i(p)=j(p)=i'(p)=j'(p)$. Similar reasoning shows that for any $p\in\P$, if any of $i(p),j(p),i'(p),j'(p)$ equal $+\infty$, then all equal $+\infty$. Therefore, $m,m',n,n'$ are all equivalent. Next, if for example $c=0$, then $a,d\neq 0$. Similar reasoning to the above shows that $m\sim m'$ and $n\sim n'$. If $a=0$, we find that $m\sim n'$ and $n\sim m'$. 
    If $b=0$, then $b,c\neq 0$, so $m\sim m'$ and $n\sim n'$. Finally, if $d=0$, then $m\sim n'$ and $n\sim m'$.

    Finally, the converse is straightforward.
\end{proof}

 To construct the AF algebras in $\E(\twovector{n}{n},{\bf g})$, we begin with an integer sequence $\g_r>2$ such that~\eqref{eq:Rapid} holds. Let $\frac{1}{2}\g_r<p_r<\gamma_r$; this requirement is motivated by \cite{marechal}. It follows that 
\begin{align*}
    0&<\g_r-p_r<p_r,\\
    0&<2p_r-\g_r<p_r.
\end{align*}
Define an AF algebra via multiplicity matrices 
\begin{equation}\label{eq:X_r-of-identical-gens}
    X_r=\begin{bmatrix}
        p_r&\g_r-p_r\\
        \g_r-p_r&p_r
    \end{bmatrix}
\end{equation} for $r\geq 1$. Let $B_r=X_1 X_2\cdots X_r$. Then the $K_0$-group is $\bigcup_{r\geq 1}B_r^{-1}(\Z^2)$. 

Let $q_r=2p_r-\g_r$, and set $g_r=\prod_{i=1}^r \g_r$ and $a_r=\prod_{i=1}^r q_r$. Note that $\g_r$ and $q_r$ have the same parity, hence the same is true for $g_r$ and $a_r$. Since $p_r=\frac{1}{2}(\g_r+q_r)$, the algebra may equally well be defined by the sequence $(q_r)$ with $0 < q_r <\g_r$. We summarize a result from \cite{marechal} to describe the $K_0$-group. Let
\begin{equation*}
    I=\begin{bmatrix}
        1&0\\0&1
    \end{bmatrix},
    \quad
    V=\begin{bmatrix}
        0&1\\
        1&0
    \end{bmatrix},
    \quad
    P=\frac{1}{2}(I+V),
    \quad \text{and} \quad
    P^\perp=\frac{1}{2}(I-V).
\end{equation*}
Then $P$ and $P^\perp=I-P$ are orthogonal projections and 
\begin{align*}
    X_r&=p_rI+(\g_r-p_r)V\\
    &=p_r(P+P^\perp)+(\g_r-p_r)(P-P^\perp)\\
    &=\g_rP+(2p_r-\g_r)P^\perp\\
    &=\g_rP+q_rP^\perp,
\end{align*}
so
\begin{equation*}
    B_r=\bigg(\prod_{i=1}^r\g_i\bigg)P+\bigg(\prod_{i=1}^rq_i\bigg)P^\perp=g_rP+a_rP^\perp    
\end{equation*}
and
\begin{equation*}
    B_r^{-1}=g_r^{-1}P+a_r^{-1}P^\perp.    
\end{equation*}
Let $\vec{e}_1,\vec{e}_2$ denote the standard basis vectors of $\C^2$. Define
\begin{equation*}
    \vec{u}_1=\begin{bsmallmatrix}
        1\\1
    \end{bsmallmatrix}=2P\be_1 \qquad\text{and}\qquad
    \vec{u}_2=\begin{bsmallmatrix}
        1\\-1
    \end{bsmallmatrix}=2P^\perp\be_2,
\end{equation*}
so that
\begin{equation*}
        \be_1=\frac{1}{2}(\vec{u}_1+\vec{u}_2) \qquad\text{and}\qquad
    \be_2=\frac{1}{2}(\vec{u}_1-\vec{u}_2)
\end{equation*}
and
\begin{equation*}
    B_r^{-1}\be_1 =(2a_rg_r)^{-1}(a_r{\vec{u}_1}+g_r\vec{u}_2) \quad\text{and}\quad
    B_r^{-1}\be_2=(2a_rg_r)^{-1}(a_r{\vec{u}_1}-g_r\vec{u}_2).
\end{equation*}
Then
\begin{align*}
    B_r^{-1}(\Z^2)&=\operatorname{span}_{\Z}\{B_r^{-1}\be_1,B_r^{-1}\be_2\}\\
    &=\{(2a_rg_r)^{-1}(ka_r{\vec{u}_1}+kg_r{\vec{u}_2}+\ell a_r{\vec{u}_1}-\ell g_r{\vec{u}_2}): k,\ell\in \Z\}\\
    &=\frac{1}{2}\{(k+\ell)g_r^{-1}{\vec{u}_1}+(k-\ell)a_r^{-1}{\vec{u}_2}: k,\ell \in \Z\}.
\end{align*}
One can check that
\begin{align*}
    \{k+\ell,k-\ell: k,\ell\in \Z\}&=\{(b,c): \text{b,c} \in \Z \text{ and } b+c \text{ is even}\}\\
    &=2\Z^2+\Z\vec{u}_1.
\end{align*}
Therefore, 
\begin{align*}
    B_r^{-1}(\Z^2)&=\frac{1}{2}\{bg_r^{-1}{\vec{u}_1}+ca_r^{-1}{\vec{u}_2}: (b,c)\in 2\Z^2+\Z {\vec{u}_1}\}\\
    &=\{b'g_r^{-1}\vec{u}_1+c'a_r^{-1}{\vec{u}_2}: (b',c')\in \Z^2+\Z\cdot\frac{1}{2}{\vec{u}_1}\}.
\end{align*}
Let
\begin{align*}
H&=\bigcup_r\{(bg_r^{-1},ca_r^{-1}:(b,c)\in \Z^2+\Z\cdot\frac{1}{2}\vec{u}_1\},
\end{align*}
which is isomorphic to the $K_0$-group of the AF algebra. 

Consider the generalized integers $g_\infty=\prod_r\g_r$ and $a_\infty=\prod_rq_r$. Since $\g_r$ and $q_r$ have the same parity, the same is true of $g_\infty$ and $a_\infty$ (and that both $g_\infty$ and $a_\infty$ are divisible by $2^\infty$ or neither is).

\begin{theorem}\label{thm:2-symm-with-arbitrary-ints}
    Let $m,n$ be generalized integers. Suppose that $m$ is not an integer, that $m$ and $n$ have the same parity, and that if $m$ is divisible by $2^\infty$ then so is $n$. Then there are sequences $\g_r$ and $p_r$ as above such that the AF algebra constructed as above satisfies $g_\infty=m$ and $a_\infty=n$.
\end{theorem}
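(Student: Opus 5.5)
We want sequences of integers $\gamma_r>2$ and $q_r$ with $0<q_r<\gamma_r$, $q_r\equiv \gamma_r \pmod 2$, the summability \eqref{eq:Rapid} for $\gamma_r$, and formal products $\prod_r\gamma_r = m$, $\prod_r q_r = n$ as generalized integers. (Since the statement asks for equality, and $\gamma_r,q_r$ are actual integers, I will aim for exact equality of exponent functions.) The plan is to distribute the prime powers of $m$ and $n$ among the levels so that at each level $\gamma_r$ is a large product of prime powers from $m$ and $q_r$ is a (smaller) product of prime powers from $n$, and then set $p_r=\frac12(\gamma_r+q_r)$.

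\textbf{Step 1: enumerate the prime powers.} Write $m=\prod_p p^{i(p)}$. Since $m$ is not an integer, the multiset of primes dividing $m$ (with multiplicity, $\infty$ allowed) is infinite. List it as a sequence $\mu_1,\mu_2,\ldots$ of primes in which each $p$ occurs exactly $i(p)$ times. Do the same for $n$, obtaining a list $\nu_1,\nu_2,\ldots$; this list may be finite, or even empty if $n=1$.

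\textbf{Step 2: parity.} If $m$ and $n$ are both odd, nothing needs adjusting. If both are even, the hypothesis and the same-parity condition are what make matching possible.
\begin{itemize}
\item If $2^\infty\mid m$, then $2^\infty\mid n$. Place a factor $2$ in every $\gamma_r$ and every $q_r$.
\item If $2^\infty\nmid m$, the exponent $i(2)$ is finite and positive, and we need $j(2)\ge1$ finite as well (if $j(2)=\infty$ with $i(2)$ finite, the parities would mismatch at late levels). Then only finitely many levels can be even. Put the $2$'s of $m$ and of $n$ into the same finitely many levels, adjusting exponents so that $\gamma_r$ and $q_r$ are both even exactly on those levels and both odd elsewhere.
\end{itemize}
This bookkeeping of the prime $2$ is the main obstacle, since ``same parity'' for generalized integers must be read carefully to make the level-by-level matching possible. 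I would first try to show that the hypotheses force $i(2)$ and $j(2)$ to be both $0$, both $\infty$, or both finite and positive, possibly reinterpreting the hypothesis.

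\textbf{Step 3: blocks and inequalities.} Group the odd part of the $\mu$'s into consecutive finite blocks, each nonempty; this is possible because infinitely many factors remain after removing the $2$'s, unless $m=2^\infty$, which is handled by the powers of $2$ alone. Group the $\nu$'s into blocks one per level, with empty blocks allowed (empty product $1$), so that all $\nu$'s are eventually used. Let $\gamma_r$ be the product of the $r$th $\mu$-block (times the assigned $2$'s) and $q_r$ the product of the $r$th $\nu$-block. Choose the $\mu$-blocks greedily long enough that $\gamma_r > \max(q_r,2)$ and $\gamma_r\ge r$. This is possible because each $q_r$ is finite, the remaining $\mu$'s are infinite in number, and each block product grows as the block lengthens. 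Then $\gamma_r\ge r+1$ gives \eqref{eq:Rapid} for $k=2$, and $0<q_r<\gamma_r$ with equal parity yields an integer $p_r=\frac12(\gamma_r+q_r)\in(\frac12\gamma_r,\gamma_r)$.

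\textbf{Step 4: conclusion.} By construction, $g_\infty=\prod\gamma_r$ and $a_\infty=\prod q_r$ have exactly the prescribed exponents. Hence $g_\infty=m$ and $a_\infty=n$.
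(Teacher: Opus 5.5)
Your construction is the paper's own. The paper's proof simply asserts that one can write $m=\prod_r m_r$ and $n=\prod_r n_r$ with $1\le n_r<m_r$, $m_r\equiv n_r\pmod 2$ and $\sum_r m_r^{-2}<\infty$, and then sets $\gamma_r=m_r$, $p_r=\frac12(m_r+n_r)$. Your Steps 1--3 justify that factorization, which the paper leaves unproved.

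\textbf{Step 2 cannot be proved, because the statement is false in the missing case.} You hope to show that the hypotheses force $i(2)$ and $j(2)$ to be both $0$, both finite and positive, or both $\infty$. They do not force this. Take $m=2\cdot 3^\infty$ and $n=2^\infty$: $m$ is not an integer, both are even, and $2^\infty\nmid m$, so the hypotheses hold. But $q_r=2p_r-\gamma_r$ forces $q_r\equiv\gamma_r\pmod 2$. Since $i(2)=1$, exactly one $\gamma_r$ is even, so exactly one $q_r$ is even. Hence the exponent of $2$ in $a_\infty$ is finite, so $a_\infty\neq n$, and not even $a_\infty\sim n$. The hypothesis actually needed is the biconditional ``$2^\infty\mid m$ if and only if $2^\infty\mid n$.'' The paper records this as a necessary condition in the paragraph just before the theorem, and its one-line proof silently uses it. You should state this outright rather than hedge. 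Under the corrected hypothesis, your Step 2 case analysis is complete and correct.

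\textbf{Step 3 has a slip in its exceptional case.} The exception is not only $m=2^\infty$ but any $m$ whose odd part is a finite integer, such as $m=2^\infty\cdot 3$. There you cannot form a nonempty odd block at every level. With only a single factor $2$ per level you eventually get $\gamma_r=2$, which violates $\gamma_r>\max(q_r,2)$ and the summability requirement. Since $i(2)=\infty$ in that case, place as many factors of $2$ into each $\gamma_r$ as needed to get $\gamma_r>\max(q_r,2)$ and $\gamma_r\ge r$. Your switch from $\gamma_r\ge r$ to $\gamma_r\ge r+1$ is harmless, since $\sum_r r^{-2}<\infty$ either way. With these two corrections, your argument is a complete proof of the corrected statement.
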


\begin{proof}

    Write $m=\prod_{r=1}^\infty m_r$ and $n=\prod_{r=1}^\infty n_r$ so that for all $r$, we have $1\leq n_r<m_r$, $\sum_r m_r^{-2}<\infty$, and $m_r,n_r$ have the same parity. Set $g_r=m_r$ and $p_r=\frac{1}{2}(m_r+n_r)$. Then $\frac{1}{2}g_r<p_r<g_r$, and $q_r=2p_r-\g_r=n_r$. Then $g_\infty=\prod_r g_r=m$ and $a_\infty=\prod_i q_i=n$.
\end{proof}

Next, note that $\Q_{g_\infty}\oplus \Q_{a_\infty}\subseteq H\subseteq \Q_{2g_\infty}\oplus \Q_{2a_\infty}$.

\begin{proposition}\label{prop:cong-dim-group-implies-ints-equiv}
    Let $g_r,p_r$ and $g_r',p_r'$ define two AF algebras as constructed above, with $K_0$-groups $H$ and $H'$, respectively. If $H\cong H'$, then $g_\infty\sim g_\infty'$ and $a_\infty\sim a_\infty'$ (or $g_\infty\sim a_\infty'$ and $a_\infty\sim g_\infty'$).
\end{proposition}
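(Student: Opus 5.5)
The plan is to reduce the statement to Lemma \ref{lem:isomorphism-of-direct-sum} by sandwiching. We know
\[
\Q_{g_\infty}\oplus \Q_{a_\infty}\subseteq H\subseteq \Q_{2g_\infty}\oplus \Q_{2a_\infty},
\]
with the index of the smaller group in the larger one at most $4$. In fact $H/(\Q_{g_\infty}\oplus\Q_{a_\infty})$ is generated by the image of $\tfrac12\vec{u}_1$ scaled along the chain, so it has order at most $2$. The key observation is that the generalized integers attached to the summands can be read off intrinsically from the group, up to the equivalence $\sim$, and that finite-index perturbations only change finitely many exponents, each by a finite amount.

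First I would make the invariant precise. For a torsion-free rank-two group $G\subseteq\Q^2$ and a nonzero $x\in G$, define the type $\tau(x)$ to be the generalized integer $\prod_p p^{h_p(x)}$, where $h_p(x)$ is the $p$-height of $x$ in $G$, that is, the supremum of those $e$ for which $p^e$ divides $x$ in $G$. Next I would compute the types in $H$. Since $H$ sits between the two direct sums above, and heights in groups related by index dividing $4$ differ only at $p=2$ and only by a bounded amount, every nonzero $x$ has $\tau(x)$ equivalent to one of the following:
\begin{itemize}
\item $g_\infty$, if $x$ is a multiple of $\vec{u}_1$;
\item $a_\infty$, if $x$ is a multiple of $\vec{u}_2$;
\item $\gcd(g_\infty,a_\infty)$, taken primewise as a minimum, otherwise.
\end{itemize}
The third case holds because a rational combination with both coordinates nonzero has $p$-height equal to the minimum of the coordinate heights, up to a bounded shift at finitely many primes determined by the coordinates.

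An isomorphism $H\cong H'$ preserves heights, so it preserves the set of $\sim$-classes of types together with their multiplicities, in the sense of which rank-one pure subgroups realize them. The rank-one pure subgroups with maximal types are $\Q\vec{u}_1\cap H$ and $\Q\vec{u}_2\cap H$. If $g_\infty\not\sim a_\infty$, the isomorphism must send these pure subgroups to the corresponding ones in $H'$, possibly swapped, and the statement follows. Alternatively, I would deduce the result directly from Lemma \ref{lem:isomorphism-of-direct-sum}: compose the inclusion $\Q_{g_\infty}\oplus\Q_{a_\infty}\hookrightarrow H$, the isomorphism $H\to H'$, and the inclusion $H'\hookrightarrow \Q_{2g'_\infty}\oplus\Q_{2a'_\infty}$. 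This gives an injection whose image has finite index (dividing $16$), and then I would rerun the matrix argument in the proof of that lemma. That argument only uses that the map is given by a rational matrix $\bigl[\begin{smallmatrix}a&b\\c&d\end{smallmatrix}\bigr]$ with rational inverse, and the factor $2$ only affects the prime $2$. That prime is a single exception, which $\sim$ permits as long as the exponents are finite; if they are infinite, both $2g_\infty$ and $g_\infty$ have exponent $\infty$ anyway. Finally, $2g_\infty\sim g_\infty$, so the conclusion transfers.

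The main obstacle is justifying that the lemma's proof applies to a finite-index embedding rather than an isomorphism. For all but finitely many primes $p$ (those coprime to $a,b,c,d$, $ad-bc$, and $2$), $p$-divisibility must be preserved in both directions, because the inverse map, scaled by $16$, also maps the larger group back into the smaller one. I would handle this by explicitly tracking the factor $16$ as another finite exception, and by treating the degenerate cases where one of $a,b,c,d$ vanishes exactly as in the lemma.
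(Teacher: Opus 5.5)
Your second argument is the paper's proof, and it is correct. That argument composes $\Q_{g_\infty}\oplus\Q_{a_\infty}\hookrightarrow H\cong H'\hookrightarrow\Q_{2g'_\infty}\oplus\Q_{2a'_\infty}$, writes the result as a rational matrix, and reruns the prime-by-prime comparison of Lemma~\ref{lem:isomorphism-of-direct-sum} at primes coprime to $2,a,b,c,d,ad-bc$. Your observation that $16$ times the inverse matrix carries $\Q_{2g'_\infty}\oplus\Q_{2a'_\infty}$ into $\Q_{g_\infty}\oplus\Q_{a_\infty}$ makes explicit the reverse inequality that the paper leaves implicit. When you rerun it, keep the lemma's step that infinite $p$-divisibility transfers at \emph{every} prime. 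Then neither $p=2$ nor the finitely many primes dividing the matrix entries can produce a finite-versus-infinite mismatch.

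The type-based sketch you give first, however, has a false step. When the types of $g_\infty$ and $a_\infty$ are comparable but not equivalent, only the line of the larger type is distinguished. For instance, take $\gamma_r=15^r$ and $q_r=3^r$, so $g_\infty=3^\infty5^\infty$ and $a_\infty=3^\infty$. In $(\vec{u}_1,\vec{u}_2)$-coordinates, the map $(x,y)\mapsto(x+2y,y)$ is an automorphism of $H$ that moves the $\vec{u}_2$-line onto a generic line. So it is not true that an isomorphism must match the two axis lines, possibly swapped. You also never treat the case $g_\infty\sim a_\infty$.

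To repair it, use only the set of $\sim$-classes of types realized by nonzero elements of $H$, namely those of $g_\infty$, $a_\infty$, and $\gcd(g_\infty,a_\infty)$. This set is an isomorphism invariant, and it determines the unordered pair $\{g_\infty,a_\infty\}$ up to $\sim$:
\begin{itemize}
\item three classes: the pair is the two maximal ones;
\item two classes: the pair is exactly those two;
\item one class: $g_\infty\sim a_\infty$.
\end{itemize}
So repaired, that route is a coordinate-free alternative to the paper's argument. It avoids the matrix bookkeeping and the case split on which entries vanish.
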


\begin{proof}
    Suppose that $H\cong H'$. We know that $\Q_{g_\infty}\oplus \Q_{a_\infty}\subseteq H\subseteq \Q_{2g_\infty}\oplus \Q_{2a_\infty},$ and $\Q_{g'_\infty}\oplus \Q_{a'_\infty}\subseteq H'\subseteq \Q_{2g'_\infty}\oplus \Q_{2a'_\infty}$. Let $H\rightarrow \Q_{2g'_\infty}\oplus \Q_{2a'_\infty}$ be the composition of the inclusion with the isomorphism. Let $(1,0)\mapsto (a,c)$ and $(0,1)\mapsto (b,d)$. If $p\in\P$ is relatively prime to $2,a,b,c,d,ad-bc$, then the argument in the proof of Lemma~\ref{lem:isomorphism-of-direct-sum} shows that $i(p)=i'(p)$ and $j(p)=j'(p)$ or $i(p)=j'(p)$ and $j(p)=i'(p)$.
\end{proof}

We are now ready to prove Theorem~\ref{thm:inf-identical-gens}.
\begin{proof}[Proof of Theorem~\ref{thm:inf-identical-gens}.]
Let $\E(\twovector{n}{n},\bg)$ be given. Since $\sum_r \g_r^{-2}<\infty$, it must be that $\lim_{r\rightarrow\infty}\g_r=\infty$. Therefore, there are uncountably many nonequivalent generalized integers $m=\prod_{r=1}^\infty m_r$ such that $m_r$ is a prime satisfying $0<m_r<\g_r$. Therefore, let $q_r=m_r$ so that $a_\infty=m$; the condition that $0<q_r<\g_r$ is satisfied.
By Proposition~\ref{prop:cong-dim-group-implies-ints-equiv}, if $a_\infty'\not\sim a_\infty$, then $H\not\cong H'$. Therefore,  as there are uncountably many nonequivalent generalized integers $a_\infty$, there are uncountably many nonisomorphic dimension groups for such AF algebras, so there are uncountably many nonisomorphic AF algebras in the ensemble $\E(\twovector{n}{n},\bg)$.
\end{proof}

\begin{remark}
    In fact, AF algebras with nonisomorphic $K_0$-groups are not Morita equivalent (see \cite[Corollary 6.2.11]{Wegge-Olsen}). Therefore Theorem~\ref{thm:inf-identical-gens} can be improved by replacing ‘non-isomorphism’ by 'non-Morita equivalence'.
\end{remark}

A possible generalization demands further exploration (Problem \ref{Problem:Repeated}):
if at least one generator is repeated, does $\eng$ contain uncountably many nonisomorphic AF algebras?

Next we consider whether the statistical properties of an $\A \in \E(\twovector{n}{n},\bg)$ can be used to distinguish it from non-isomorphic algebras in the same ensemble.  We first require the next result.  
Since the entries of \eqref{eq:X_r-of-identical-gens} depend only upon the $(1,1)$ entry (we regard $\gamma_r$ as given), we formulate things in terms of this entry.

\begin{proposition}\label{prop:nonisom-using-p}
    Let $\g_r,p_r$ and $\g_r',p_r'$ define two AF algebras as constructed above with $K_0$ groups $H,H'$, respectively. Let
    $g_\infty=\prod_r\g_r$ and $\alpha_\infty=\prod_r q_r=\prod_r (2p_r-\g_r)$,  and let $g_\infty'=\prod_r\g_r'$ and $\alpha_\infty'=\prod_rq_r'=\prod_r(2p_r'-\g_r')$.
    Let $p$ be an odd prime such that $p^{\infty}$ formally divides $g_{\infty}$. Let $X_r$ and $Y_r$ be the partial multiplicity matrices for $\A,\A'\in \E(\twovector{n}{n},\bg)$, respectively, at the $r$th level.
    If $p\mid x_{11}^{(r)}$ and $p\nmid y_{11}^{(r)}$ for all but finitely many $r$, then $\A\not\cong \A'$.
\end{proposition}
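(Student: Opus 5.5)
The plan is to turn the divisibility hypotheses on the $(1,1)$ entries into statements about the $p$-adic exponents of the generalized integers $\alpha_\infty$ and $\alpha_\infty'$, and then apply Proposition~\ref{prop:cong-dim-group-implies-ints-equiv}. Both algebras lie in the same ensemble $\E(\twovector{n}{n},\bg)$, so $\g_r=\g_r'$ for every $r$ and hence $g_\infty=g_\infty'$. By \eqref{eq:X_r-of-identical-gens}, $x_{11}^{(r)}=p_r$ and $y_{11}^{(r)}=p_r'$, so
\[
q_r=2x_{11}^{(r)}-\g_r \qquad\text{and}\qquad q_r'=2y_{11}^{(r)}-\g_r .
\]
Isomorphic AF algebras have isomorphic $K_0$-groups. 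So if $\A\cong\A'$, then $H\cong H'$, and Proposition~\ref{prop:cong-dim-group-implies-ints-equiv} gives either $\alpha_\infty\sim\alpha_\infty'$, or $g_\infty\sim\alpha_\infty'$ together with $\alpha_\infty\sim g_\infty$. In both alternatives $\alpha_\infty'$ is equivalent to a generalized integer whose $p$-exponent is $+\infty$: in the first case to $\alpha_\infty$, provided we show $p^\infty$ divides $\alpha_\infty$; in the second case directly to $g_\infty$. Since equivalence preserves whether an exponent is infinite, the goal is to show that $p^\infty$ formally divides $\alpha_\infty$ but not $\alpha_\infty'$, which contradicts both alternatives.

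For $\A$: whenever $p\mid\g_r$ and $p\mid x_{11}^{(r)}$, we get $p\mid q_r=2x_{11}^{(r)}-\g_r$. The hypothesis that $p^\infty$ divides $g_\infty=\prod_r\g_r$ forces $p\mid\g_r$ for infinitely many $r$, since each $\g_r$ is a finite integer. Combined with $p\mid x_{11}^{(r)}$ for all but finitely many $r$, this gives $p\mid q_r$ for infinitely many $r$. Hence $p^\infty$ formally divides $\alpha_\infty$.

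For $\A'$: whenever $p\mid\g_r$ and $p\nmid y_{11}^{(r)}$, we have $q_r'\equiv 2y_{11}^{(r)}\not\equiv0\pmod p$, using that $p$ is odd. So on the indices where $p\mid\g_r$, the factor $q_r'$ contributes no factor of $p$ to $\alpha_\infty'$.

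The main obstacle is to show that the $p$-exponent of $\alpha_\infty'$ is finite, because the indices $r$ with $p\nmid\g_r$ are not controlled by the hypothesis as stated: there $q_r'$ could be divisible by $p$. I would first try to read the hypothesis "$p^\infty$ formally divides $g_\infty$" in the sense the construction implicitly uses, namely that $p\mid\g_r$ for all but finitely many $r$. Under that reading, $p$ divides only finitely many of the $q_r'$, so the $p$-exponent of $\alpha_\infty'$ is finite, and the dichotomy of Proposition~\ref{prop:cong-dim-group-implies-ints-equiv} yields the contradiction. If that reading is not available, I would strengthen the statement slightly by requiring in addition that $p\nmid q_r'$ for all but finitely many $r$ with $p\nmid\g_r$, and then run the same argument. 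Finally, I would note that the conclusion also rules out Morita equivalence, by the remark following Theorem~\ref{thm:inf-identical-gens}.
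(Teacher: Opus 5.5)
Your proposal is correct and follows essentially the paper's proof. The paper also assumes $p\mid\gamma_r$ for all but finitely many $r$, which is the strong reading of ``$p^\infty$ formally divides $g_\infty$'' that you flag; it deduces, since $p$ is odd, that $p\mid q_r$ exactly when $p\mid x_{11}^{(r)}$ (and likewise for $y_{11}^{(r)}$) at those indices, and then concludes via Proposition~\ref{prop:cong-dim-group-implies-ints-equiv}. Your remark that the literal hypothesis (only infinitely many $r$ with $p\mid\gamma_r$) would not bound the $p$-exponent of $\alpha_\infty'$ is accurate, as is your explicit handling of the swapped alternative $g_\infty\sim\alpha_\infty'$; the paper leaves both points implicit.
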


\begin{proof}
    Since $p$ is an odd prime dividing $\g_r$ all but finitely often, for such indices it follows that $p\mid(2z-\g_r)$ if and only if $p\mid z$ for $z=x_{11}^{(r)}$ or $y_{11}^{(r)}$. Thus, if $p\mid x_{11}^{(r)}$ and $p\mid y_{11}^{(r)}$ all but finitely often, then $p^\infty$ formally divides $\alpha_\infty$ and does not formally divide $\alpha_\infty'$. Thus, $G\not\cong H$ by Proposition~\ref{prop:cong-dim-group-implies-ints-equiv}.
\end{proof}

We next compute the asymptotic probability that $p$ divides $x_{11}^{(r)}$.

\begin{proposition}\label{prop:prob-p-mid-x}
    If $p$ is an odd prime such that $p^{\infty}$ formally divides $g_{\infty}$, then as $r\to\infty$
    \begin{equation*}
        \P\Big(p\mid x_{11}^{(r)}\Big) \sim \frac{1}{p}.
    \end{equation*}
    In particular, the probability that $p\mid x_{11}^{(r)}$ for all but finitely many $r$ is $0$.
\end{proposition}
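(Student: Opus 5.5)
In the ensemble $\E(\twovector{n}{n},\bg)$ we have $k=2$ and $n_1=n_2=n$. So $X_r\vec{n}=\gamma_r\vec{n}$ says exactly that each row of $X_r$ sums to $\gamma_r$. Hence, by \eqref{eq:HomZZ}, $x_{11}^{(r)}$ is uniformly distributed on $\{0,1,\ldots,\gamma_r\}$, independently of the second row. This is also the $k=2$ case of Proposition~\ref{Proposition:EdgeWeights}, where the density $(k-1)(1-t)^{k-2}$ is constant. I would therefore compute the probability directly rather than through the spline machinery:
\begin{equation*}
\P\big(p\mid x_{11}^{(r)}\big)=\frac{\lfloor \gamma_r/p\rfloor+1}{\gamma_r+1}.
\end{equation*}
Since \eqref{eq:Rapid} forces $\gamma_r\to\infty$, this is $\frac{1}{p}+O(\gamma_r^{-1})$, so it is asymptotic to $1/p$. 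If $p\mid\gamma_r$, which holds all but finitely often, the value is exactly $(\gamma_r/p+1)/(\gamma_r+1)$. The hypothesis that $p^\infty$ formally divides $g_\infty$ is not actually needed for the asymptotic itself.

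For the second assertion, let $F_R$ be the event that $p\mid x_{11}^{(r)}$ for all $r\geq R$. The event that $p\mid x_{11}^{(r)}$ for all but finitely many $r$ is $\bigcup_R F_R$, so it suffices to show $\P(F_R)=0$ for every $R$. The coordinate maps $\pi_r$ are independent by \eqref{eq:ProbabilityProduct}, so for every $N\geq R$,
\begin{equation*}
\P(F_R)\leq \prod_{r=R}^{N}\P\big(p\mid x_{11}^{(r)}\big).
\end{equation*}
Each factor tends to $1/p\leq 1/3$, so eventually each factor is at most $1/2$. Letting $N\to\infty$ then gives $\P(F_R)=0$. Countable subadditivity finishes the argument.

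The only subtle points are bookkeeping. One must identify the uniform measure on $\shom(\A_r,\A_{r+1})$ with the product of uniform measures on the rows, which \eqref{eq:HomZZ} provides. One must also note that the events $F_R$ are measurable, being countable intersections of cylinder sets. I expect no real obstacle, since the exact count makes the limit immediate.
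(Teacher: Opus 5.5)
Your proof is correct, but it takes a more direct route than the paper. The paper never uses the exact uniform distribution. It quotes Proposition~\ref{Proposition:EdgeWeights} to get $\P(x_{11}^{(r)}=w)\sim 1/\gamma_r$ for each fixed $w$. It then invokes Lemma~\ref{Lemma:AsympototicSum} to sum over the roughly $\lfloor\gamma_r/p\rfloor$ multiples of $p$. For the second assertion, it applies the second Borel--Cantelli lemma to the independent complementary events $p\nmid x_{11}^{(r)}$, whose probabilities tend to $1-1/p>0$.

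Your exact formula $(\lfloor\gamma_r/p\rfloor+1)/(\gamma_r+1)$ gains rigor at the one delicate point of that route. Lemma~\ref{Lemma:AsympototicSum} handles a fixed number of summands, and Proposition~\ref{Proposition:EdgeWeights} is pointwise in $w$. Here, however, the number of summands grows with $r$, so the asymptotic is really needed uniformly in $w$. For $k=2$ and $n_1=n_2$ this uniformity is automatic, since every value has probability exactly $1/(\gamma_r+1)$, and your count makes that explicit. Your count also shows, correctly, that the hypothesis on $p$ plays no role here; it matters only in Proposition~\ref{prop:nonisom-using-p}.

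Your product estimate $\P(F_R)\le\prod_{r=R}^{N}\P(p\mid x_{11}^{(r)})$ is just the proof of the second Borel--Cantelli lemma specialized to this case. So the two second halves are equivalent, and both rest on the independence of the coordinates $\pi_r$. The paper's route has the advantage of fitting the general spline framework, which is the natural starting point for ensembles with no exact count. There, though, the uniformity issue would have to be addressed.

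One small inaccuracy appears in an aside you never use. That $p^\infty$ formally divides $g_\infty=\prod_r\gamma_r$ only means $p\mid\gamma_r$ for infinitely many $r$, not for all but finitely many.
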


\begin{proof}
For any $w \in \Z_{\geq 0}$, Proposition~\ref{Proposition:EdgeWeights} ensures that
\begin{equation}\label{eq:prob-equally-likely}
\P\Big(x_{11}^{(r)}=w\Big)\sim\frac{1}{\g_r}.
\end{equation}
Thus, Lemma \ref{Lemma:AsympototicSum} yields
\begin{equation*}
    \P\Big(p\mid x_{11}^{(r)}\Big)
    \sim \bigg\lfloor \frac{\g_r}{p} \bigg\rfloor \frac{1}{\g_r} \sim \frac{1}{p}.
\end{equation*}
For the second statement, apply the second Borel--Cantelli lemma \cite[Thm.~4.4]{Billingsley}
to the complementary events $p \nmid x_{11}^{(r)}$.
\end{proof}

\begin{theorem}\label{Theorem:PrimeThing}
    Suppose that $p$ is an odd prime such that $p^{\infty}$ formally divides $g_{\infty}$.
    Let $\A \in \E(\twovector{n}{n},\bg)$ be such that $p\mid x_{11}^{(r)}$ all but finitely often. 
    Then the probability that a randomly selected $\A'\in \E(\twovector{n}{n},\bg)$ is isomorphic to $\A$ is zero.
\end{theorem}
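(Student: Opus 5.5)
We aim to show that the set of $\A'$ isomorphic to $\A$ lies inside a null set. The natural null set is
\begin{equation*}
E = \{\A' \in \E(\twovector{n}{n},\bg) : p \mid y_{11}^{(r)} \text{ for all but finitely many } r\},
\end{equation*}
where $Y_r$ denotes the partial multiplicity matrix of $\A'$ at level $r$. The argument splits into a probabilistic step and an algebraic step.

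\textbf{Probabilistic step.} By Proposition \ref{prop:prob-p-mid-x}, $\P(E)=0$. To see this directly, note that the events $\{p\nmid y_{11}^{(r)}\}$ depend only on the coordinates $\pi_r$. Hence they are independent with respect to the product measure \eqref{eq:ProbabilityProduct}. Their probabilities tend to $1-1/p>0$, so their sum diverges. The second Borel--Cantelli lemma then shows that $p\nmid y_{11}^{(r)}$ occurs infinitely often almost surely, so $E$ is null.

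\textbf{Algebraic step.} We must show that every $\A'$ outside $E$ fails to be isomorphic to $\A$. Here lies the main obstacle: Proposition \ref{prop:nonisom-using-p} applies only when $p\nmid y_{11}^{(r)}$ for all but finitely many $r$, which is stronger than failing to lie in $E$. Moreover, a random $\A'$ in $\E(\twovector{n}{n},\bg)$ need not have the symmetric form \eqref{eq:X_r-of-identical-gens}. Any $Y_r$ with $Y_r\twovector{n}{n}=\gamma_r\twovector{n}{n}$ has row sums $\gamma_r$, but the two rows need not match, so the $K_0$ computation of Section \ref{Section:K-Theory} does not apply verbatim.

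To get around this, I would first compute a $p$-adic invariant of $K_0(\A)$. For $\A$ of the form \eqref{eq:X_r-of-identical-gens}, the discussion above gives $p^\infty \mid g_\infty$ and $p^\infty\mid a_\infty$. Together with $\Q_{g_\infty}\oplus\Q_{a_\infty}\subseteq H$, this shows that $K_0(\A)$ is $p$-divisible. In general $K_0$ is $p$-divisible exactly when $\det(Y_{s}\cdots Y_{r})$ is divisible by arbitrarily high powers of $p$ as $s\to\infty$, that is, when $p\mid\det Y_r$ infinitely often. For a general row-sum-$\gamma_r$ matrix,
\begin{equation*}
\det Y_r=\gamma_r\,(y_{11}^{(r)}-y_{21}^{(r)}),
\end{equation*}
so the divisibility of $\det Y_r$ by $p$ depends on more than $y_{11}^{(r)}$ alone.

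Making this precise at the level of generalized integers is the real work. If it fails, the fallback is to interpret the theorem within the symmetric subfamily with the conditioned measure, where Proposition \ref{prop:nonisom-using-p} applies. In that setting the condition $p\mid x_{11}^{(r)}$ all but finitely often forces $p^\infty$ to formally divide $\alpha_\infty$. Conversely, $p^\infty$ formally divides $\alpha'_\infty$ only if $p\mid y_{11}^{(r)}$ infinitely often, with enough multiplicity to exceed the finitely many exceptions allowed by Proposition \ref{prop:cong-dim-group-implies-ints-equiv}. I would then tighten the Borel--Cantelli estimate to show that $p^\infty\mid\alpha'_\infty$ has probability zero, and combine this with the probabilistic step.
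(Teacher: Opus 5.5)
You have diagnosed the paper's argument correctly. Its proof of this theorem is exactly the one-line combination of Propositions~\ref{prop:nonisom-using-p} and~\ref{prop:prob-p-mid-x}, and it has the two defects you name. Proposition~\ref{prop:prob-p-mid-x} only gives $p\nmid y_{11}^{(r)}$ infinitely often. Proposition~\ref{prop:nonisom-using-p} only concerns chains of the form \eqref{eq:X_r-of-identical-gens}, and even eventually symmetric chains form a null set.

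Your repair still has a genuine gap: the invariants you propose do not separate $\A$ from a random $\A'$. Put $\lambda_r=y_{11}^{(r)}-y_{21}^{(r)}$. Every $Y_r$ maps $\Z\vec{1}$ to itself by multiplication by $\gamma_r$, and $v\mapsto v_1-v_2$ intertwines $Y_r$ with multiplication by $\lambda_r$. Exactness of direct limits then gives $0\to\Q_{g_\infty}\to K_0(\A')\to\varinjlim(\Z,\lambda_r)\to 0$.

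Since $p^\infty$ formally divides $g_\infty$, $K_0(\A')$ is $p$-divisible if and only if $p\mid\lambda_r$ infinitely often. Your criterion ``$p\mid\det Y_r$ infinitely often'' is wrong: $\det Y_r=\gamma_r\lambda_r$, and $p\mid\gamma_r$ infinitely often for every chain. Moreover, $\P(p\mid\lambda_r)\to 1/p$ independently in $r$, so the second Borel--Cantelli lemma makes $K_0(\A')$ $p$-divisible almost surely. That is exactly the property you found for $K_0(\A)$ in the symmetric case.

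The fallback fails for the same reason. In the symmetric model $\P(p\mid q_r')\to 1/p$, so $p^\infty$ formally divides $\alpha'_\infty$ with probability one, and no sharpening of Borel--Cantelli can make that probability zero. Your exact sequence does give $\P(\A'\cong\A)=0$ when $p\nmid x_{11}^{(r)}-x_{21}^{(r)}$ for all large $r$. However, no symmetric chain satisfying the hypothesis is of that kind, since for those $\lambda_r=q_r$ and $p\mid q_r$ infinitely often.

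More fundamentally, the inclusion $\{\A'\cong\A\}\subseteq E$ on which your plan rests can fail, and the statement as posed fails with it. Take $g_r=r!$, so $\gamma_r=r+1$, \eqref{eq:Rapid} holds, and $3^\infty$ formally divides $g_\infty$; let $p=3$.

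At each level $\P(y_{11}^{(r)}=y_{21}^{(r)})=1/(\gamma_r+1)$, which is not summable. So almost surely $\lambda_r=0$ infinitely often, whence $\varinjlim(\Z,\lambda_r)=0$ and $(K_0(\A'),K_0(\A')^+,[1])\cong(\Q,\Q\cap[0,\infty),n)$. Thus almost every $\A'$ is the universal UHF algebra.

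Now take the chain $\A$ with $X_r=\big[\begin{smallmatrix}3&\gamma_r-3\\3&\gamma_r-3\end{smallmatrix}\big]$ for $r\geq 3$. It satisfies $3\mid x_{11}^{(r)}$ for all $r\geq 3$ and has the same invariant, so $\P(\A'\cong\A)=1$. Any correct version therefore needs extra hypotheses on $\A$ (and possibly on $\bg$), together with an invariant that genuinely varies with the random choice. The $p$-adic behaviour of $x_{11}^{(r)}$ alone cannot do the job.
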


\begin{proof}
This follows immediately from Propositions \ref{prop:nonisom-using-p} and \ref{prop:prob-p-mid-x}.
\end{proof}

\section{Further Research}\label{Section:Further}
We conclude with several appealing questions that we hope will spur further research on rapidly growing AF algebras.

As we have seen, rapidly growing AF algebras are tractable generalizations of UHF algebras that link probability theory, combinatorics, and operator algebras.  
We know that certain ensembles $\eng$ contain uncountably many isomorphism classes of algebras.
One hopes to show that there are infinitely (perhaps uncountably) many non-isomorphic algebras in any ensemble $\eng$ without the restrictions in Theorem~\ref{thm:inf-identical-gens}. 
Although it is not clear that these restrictions are truly necessary, we were unable to obtain a general result that applies to all ensembles.  Therefore, we pose the following question.

\begin{problem}
When does $\eng$ contain infinitely (or uncountably) many non-isomorphic AF algebras?  Does every ensemble $\eng$ have this property?
\end{problem}

Theorem \ref{thm:inf-identical-gens} suggests that the next problem should have an affirmative solution.
However, it is not immediately obvious that adding generators does not reduce the number of isomorphism classes
represented in the original ensemble.

\begin{problem}\label{Problem:Repeated}
If at least one generator is repeated, does $\eng$ contain uncountably many non-isomorphic AF algebras?    
\end{problem}

As we have seen, K-theory distinguishes between some members of certain ensembles $\eng$.  One wonders how much further one can push these techniques.

\begin{problem}
Explicitly describe the K-theory of rapidly growing AF algebras.
\end{problem}

Perhaps K-theory it is a finer tool than necessary.  We might seek not to distinguish a given rapidly growing AF algebra from all other AF algebras, but rather from non-isomorphic algebras in a given ensemble.  Therefore, other techniques might serve the same purpose.  Theorem \ref{Theorem:PrimeThing} may be a step in this direction.

\begin{problem}
    Is there a collection of asymptotic statistics that can distinguish between non-isomorphic AF algebras in a given ensemble? 
\end{problem}

Theorem \ref{Theorem:Simple} immediately suggests the next question.

\begin{problem}\label{Problem:Simple}
    If $\sum_{r=1}^{\infty} \gamma_r^{-1}$ diverges, what is the probability that a randomly chosen algebra in $\eng$ is simple?
\end{problem}

Similarly, the results of Section \ref{Section:Ideals} points toward the following.

\begin{problem}\label{Problem:UHF}
    Classify all embeddings of UHF algebras into algebras in $\eng$.
\end{problem}

We hope to investigate these sorts of problems in later work.

\bibliographystyle{plain}
\bibliography{supporting_files/bib}

@incollection{bouthat2024hunterspositivitytheoremrandom,
    AUTHOR = {Bouthat, Ludovick and Ch\'{a}vez, \'{A}ngel and Garcia,
              Stephan Ramon},
     TITLE = {Hunter's positivity theorem and random vector norms},
 BOOKTITLE = {Operator theory, related fields, and applications},
    SERIES = {Oper. Theory Adv. Appl.},
    VOLUME = {307},
     PAGES = {149--215},
 PUBLISHER = {Birkh\"{a}user/Springer, Cham},
      YEAR = {2025},
      ISBN = {978-3-032-00154-2; 978-3-032-00155-9},
   MRCLASS = {15 (05 60)},
  MRNUMBER = {4978054},
       DOI = {10.1007/978-3-032-00155-9\{_}6}

@book {HalmosMeasure,
    AUTHOR = {Halmos, Paul R.},
     TITLE = {Measure {T}heory},
 PUBLISHER = {D. Van Nostrand Co., Inc., New York},
      YEAR = {1950},
     PAGES = {xi+304},
   MRCLASS = {27.2X},
  MRNUMBER = {33869},
MRREVIEWER = {S. Kakutani},
}

@incollection {ReciprocalSchur,
    AUTHOR = {B\"{o}ttcher, Albrecht and Garcia, Stephan Ramon and Mitkovski,
              Mishko},
     TITLE = {The reciprocal {S}chur inequality},
 BOOKTITLE = {Analysis without borders},
    SERIES = {Oper. Theory Adv. Appl.},
    VOLUME = {297},
     PAGES = {41--49},
 PUBLISHER = {Birkh\"{a}user/Springer, Cham},
      YEAR = {[2024] \copyright 2024},
   MRCLASS = {26D05 (05A20 05E05 26D10 39B62)},
  MRNUMBER = {4786513},
MRREVIEWER = {Jia Xu},
       DOI = {10.1007/978-3-031-59397-0\_3},
       URL = {https://doi.org/10.1007/978-3-031-59397-0_3},
}

@book {Davis,
    AUTHOR = {Davis, P. J.},
     TITLE = {Circulant matrices},
      NOTE = {A Wiley-Interscience Publication,
              Pure and Applied Mathematics},
 PUBLISHER = {John Wiley \& Sons, New York-Chichester-Brisbane},
      YEAR = {1979},
     PAGES = {xv+250},
      ISBN = {0-471-05771-1},
   MRCLASS = {15-02 (15A57 65F30)},
  MRNUMBER = {543191 (81a:15003)},
MRREVIEWER = {Cs. J. Heged{\H{u}}s},
}

@article {SchoenbergWhitney,
    AUTHOR = {Schoenberg, I. J. and Whitney, Anne},
     TITLE = {On {P}\'{o}lya frequence functions. {III}. {T}he positivity of
              translation determinants with an application to the
              interpolation problem by spline curves},
   JOURNAL = {Trans. Amer. Math. Soc.},
  FJOURNAL = {Transactions of the American Mathematical Society},
    VOLUME = {74},
      YEAR = {1953},
     PAGES = {246--259},
      ISSN = {0002-9947},
   MRCLASS = {27.0X},
  MRNUMBER = {53177},
MRREVIEWER = {E. Hille},
       DOI = {10.2307/1990881},
       URL = {https://doi.org/10.2307/1990881},
}

@book {CohnMeasure,
    AUTHOR = {Cohn, Donald L.},
     TITLE = {Measure theory},
    SERIES = {Birkh\"{a}user Advanced Texts: Basler Lehrb\"{u}cher. [Birkh\"{a}user
              Advanced Texts: Basel Textbooks]},
   EDITION = {Second},
 PUBLISHER = {Birkh\"{a}user/Springer, New York},
      YEAR = {2013},
     PAGES = {xxi+457},
      ISBN = {978-1-4614-6955-1; 978-1-4614-6956-8},
   MRCLASS = {28-01},
  MRNUMBER = {3098996},
MRREVIEWER = {Ville Suomala},
       DOI = {10.1007/978-1-4614-6956-8},
       URL = {https://doi.org/10.1007/978-1-4614-6956-8},
}

@article{abhyankar1967local,
  title={Local rings of high embedding dimension},
  author={Abhyankar, S. S.},
  journal={Amer. J. Math.},
  volume={89},
  number={4},
  pages={1073--1077},
  year={1967},
  publisher={JSTOR}
}

@book{barucci1997maximality,
  title={Maximality properties in numerical semigroups and applications to one-dimensional analytically irreducible local domains},
  author={Barucci, V. and Dobbs, D. E and Fontana, M.},
  volume={598},
  year={1997},
  publisher={American Mathematical Soc.}
}

@incollection{pisinger1998knapsack,
  title={Knapsack problems},
  author={Pisinger, David and Toth, Paolo},
  booktitle={Handbook of combinatorial optimization},
  pages={299--428},
  year={1998},
  publisher={Springer}
}

@book{de2013algebraic,
  title={Algebraic and geometric ideas in the theory of discrete optimization},
  author={De Loera, Jes{\'u}s A and Hemmecke, Raymond and Matthias, K and others},
  volume={14},
  year={2013},
  publisher={SIAM}
}

@book {Billingsley,
    AUTHOR = {Billingsley, Patrick},
     TITLE = {Probability and measure},
    SERIES = {Wiley Series in Probability and Mathematical Statistics},
   EDITION = {Third},
      NOTE = {A Wiley-Interscience Publication},
 PUBLISHER = {John Wiley \& Sons, Inc., New York},
      YEAR = {1995},
     PAGES = {xiv+593},
      ISBN = {0-471-00710-2},
   MRCLASS = {60-01 (28-01)},
  MRNUMBER = {1324786},
}

@article {garcia2025noncommutativegeneralizationhunterspositivity,
    AUTHOR = {Garcia, Stephan Ramon and Vol\v{c}i\v{c}, Jurij},
     TITLE = {A noncommutative generalization of {H}unter's positivity
              theorem},
   JOURNAL = {Proc. Amer. Math. Soc.},
  FJOURNAL = {Proceedings of the American Mathematical Society},
    VOLUME = {154},
      YEAR = {2026},
    NUMBER = {2},
     PAGES = {585--597},
      ISSN = {0002-9939,1088-6826},
   MRCLASS = {05E05 (13J30 15A45 16S10)},
  MRNUMBER = {5016543},
       DOI = {10.1090/proc/17480},
       URL = {https://doi.org/10.1090/proc/17480},
}

@article {Hunter,
    AUTHOR = {Hunter, David B.},
     TITLE = {The positive-definiteness of the complete symmetric functions
              of even order},
   JOURNAL = {Math. Proc. Cambridge Philos. Soc.},
  FJOURNAL = {Mathematical Proceedings of the Cambridge Philosophical
              Society},
    VOLUME = {82},
      YEAR = {1977},
    NUMBER = {2},
     PAGES = {255--258},
      ISSN = {0305-0041},
   MRCLASS = {05A15 (26A69 30A10)},
  MRNUMBER = {450079},
MRREVIEWER = {John P. Coleman},
       DOI = {10.1017/S030500410005386X},
       URL = {https://doi.org/10.1017/S030500410005386X},
}

@article {Tao,
    AUTHOR = {Tao, Terence},
    TITLE = {Schur convexity and positive definiteness of the even degree complete homogeneous symmetric polynomials},
NOTE = {\url{https://terrytao.wordpress.com/2017/08/06/schur-convexity-and-positive-definiteness-of-the-even-degree-complete-homogeneous-symmetric-polynomials/}}
}

@book {StanleyBook2,
    AUTHOR = {Stanley, Richard P.},
     TITLE = {Enumerative combinatorics. {V}ol. 2},
    SERIES = {Cambridge Studies in Advanced Mathematics},
    VOLUME = {62},
      NOTE = {With a foreword by Gian-Carlo Rota and appendix 1 by Sergey
              Fomin},
 PUBLISHER = {Cambridge University Press, Cambridge},
      YEAR = {1999},
     PAGES = {xii+581},
      ISBN = {0-521-56069-1; 0-521-78987-7},
   MRCLASS = {05A15 (05-02 05E05 05E10 68R05)},
  MRNUMBER = {1676282},
MRREVIEWER = {Ira Gessel},
       DOI = {10.1017/CBO9780511609589},
       URL = {https://doi.org/10.1017/CBO9780511609589},
}

@article {EffrosShen,
    AUTHOR = {Effros, Edward G. and Shen, Chao Liang},
     TITLE = {Approximately finite {$C\sp{\ast} $}-algebras and continued
              fractions},
   JOURNAL = {Indiana Univ. Math. J.},
  FJOURNAL = {Indiana University Mathematics Journal},
    VOLUME = {29},
      YEAR = {1980},
    NUMBER = {2},
     PAGES = {191--204},
      ISSN = {0022-2518},
   MRCLASS = {46L05 (06F20)},
  MRNUMBER = {563206},
MRREVIEWER = {P. A. Fillmore},
       DOI = {10.1512/iumj.1980.29.29013},
       URL = {https://doi.org/10.1512/iumj.1980.29.29013},
}

@article {WeightedMeans,
    AUTHOR = {B\"{o}ttcher, Albrecht and Garcia, Stephan Ramon and Omar, Mohamed
              and O'Neill, Christopher},
     TITLE = {Weighted means of {B}-splines, positivity of divided
              differences, and complete homogeneous symmetric polynomials},
   JOURNAL = {Linear Algebra Appl.},
  FJOURNAL = {Linear Algebra and its Applications},
    VOLUME = {608},
      YEAR = {2021},
     PAGES = {68--83},
      ISSN = {0024-3795},
   MRCLASS = {05E05 (41A15)},
  MRNUMBER = {4140644},
MRREVIEWER = {Tian-Xiao He},
       DOI = {10.1016/j.laa.2020.08.018},
       URL = {https://doi.org/10.1016/j.laa.2020.08.018},
}

@book{comtet,
    AUTHOR = {Comtet, Louis},
     TITLE = {Advanced combinatorics},
   EDITION = {enlarged},
      NOTE = {The art of finite and infinite expansions},
 PUBLISHER = {D. Reidel Publishing Co., Dordrecht},
      YEAR = {1974},
     PAGES = {xi+343},
      ISBN = {90-277-0441-4},
   MRCLASS = {05-02},
  MRNUMBER = {460128},
}

@book{schur1926additiven,
  title={Zur additiven Zahlentheorie},
  author={Schur, I.},
  series={Sitzungsberichte der Preussischen Akademie der Wissenschaften. Physikalisch-mathematische Klasse},
  url={https://books.google.com/books?id=lW39SAAACAAJ},
  year={1926}
}

@article {arnold,
    AUTHOR = {Arnold, V.~I.},
     TITLE = {Weak asymptotics of the numbers of solutions of {D}iophantine
              equations},
   JOURNAL = {Funktsional. Anal. i Prilozhen.},
  FJOURNAL = {Rossi\u\i skaya Akademiya Nauk. Funktsional$\prime$ny\u\i \ Analiz i ego
              Prilozheniya},
    VOLUME = {33},
      YEAR = {1999},
    NUMBER = {4},
     PAGES = {65--66},
      ISSN = {0374-1990},
   MRCLASS = {11P21 (11D45 11P82)},
  MRNUMBER = {1746430},
MRREVIEWER = {J. S. Joel},
       DOI = {10.1007/BF02467112},
       URL = {http://dx.doi.org/10.1007/BF02467112},
}

@article {Mundici,
    AUTHOR = {Mundici, Daniele},
     TITLE = {Farey stellar subdivisions, ultrasimplicial groups, and
              {$K_0$} of {AF} {$C^*$}-algebras},
   JOURNAL = {Adv. in Math.},
  FJOURNAL = {Advances in Mathematics},
    VOLUME = {68},
      YEAR = {1988},
    NUMBER = {1},
     PAGES = {23--39},
      ISSN = {0001-8708},
   MRCLASS = {46L80 (06F20 11H99 19K14)},
  MRNUMBER = {931170},
MRREVIEWER = {Bola O. Balogun},
       DOI = {10.1016/0001-8708(88)90006-0},
       URL = {https://doi.org/10.1016/0001-8708(88)90006-0},
}

@article {Eckhardt,
    AUTHOR = {Eckhardt, Caleb},
     TITLE = {A noncommutative {G}auss map},
   JOURNAL = {Math. Scand.},
  FJOURNAL = {Mathematica Scandinavica},
    VOLUME = {108},
      YEAR = {2011},
    NUMBER = {2},
     PAGES = {233--250},
      ISSN = {0025-5521},
   MRCLASS = {46L89},
  MRNUMBER = {2805604},
MRREVIEWER = {Marius V. Ionescu},
       DOI = {10.7146/math.scand.a-15169},
       URL = {https://doi.org/10.7146/math.scand.a-15169},
}

@article {Spielberg,
    AUTHOR = {Mitscher, Ian and Spielberg, Jack},
     TITLE = {A{F} {$C^*$}-algebras from non-{AF} groupoids},
   JOURNAL = {Trans. Amer. Math. Soc.},
  FJOURNAL = {Transactions of the American Mathematical Society},
    VOLUME = {375},
      YEAR = {2022},
    NUMBER = {10},
     PAGES = {7323--7371},
      ISSN = {0002-9947},
   MRCLASS = {46L05 (22A22 46L80)},
  MRNUMBER = {4491428},
MRREVIEWER = {Chris Bruce},
       DOI = {10.1090/tran/8723},
       URL = {https://doi.org/10.1090/tran/8723},
}

@article {Boca,
    AUTHOR = {Boca, Florin P.},
     TITLE = {An {AF} algebra associated with the {F}arey tessellation},
   JOURNAL = {Canad. J. Math.},
  FJOURNAL = {Canadian Journal of Mathematics. Journal Canadien de
              Math\'{e}matiques},
    VOLUME = {60},
      YEAR = {2008},
    NUMBER = {5},
     PAGES = {975--1000},
      ISSN = {0008-414X},
   MRCLASS = {46L05 (11A55 11B57 37E05 47L55 82B20)},
  MRNUMBER = {2442043},
MRREVIEWER = {Ryan J. Zerr},
       DOI = {10.4153/CJM-2008-043-1},
       URL = {https://doi.org/10.4153/CJM-2008-043-1},
}

@article {Bratteli72,
    AUTHOR = {Bratteli, Ola},
     TITLE = {Inductive limits of finite dimensional {$C\sp{\ast}
              $}-algebras},
   JOURNAL = {Trans. Amer. Math. Soc.},
  FJOURNAL = {Transactions of the American Mathematical Society},
    VOLUME = {171},
      YEAR = {1972},
     PAGES = {195--234},
      ISSN = {0002-9947},
   MRCLASS = {46L05},
  MRNUMBER = {312282},
MRREVIEWER = {Z. Takeda},
       DOI = {10.2307/1996380},
       URL = {https://doi.org/10.2307/1996380},
}

@article {Bratteli74,
    AUTHOR = {Bratteli, Ola},
     TITLE = {Structure spaces of approximately finite-dimensional
              {$C\sp{\ast} $}-algebras},
   JOURNAL = {J. Functional Analysis},
  FJOURNAL = {Journal of Functional Analysis},
    VOLUME = {16},
      YEAR = {1974},
     PAGES = {192--204},
      ISSN = {0022-1236},
   MRCLASS = {46L05},
  MRNUMBER = {348507},
MRREVIEWER = {J. W. Bunce},
       DOI = {10.1016/0022-1236(74)90063-9},
       URL = {https://doi.org/10.1016/0022-1236(74)90063-9},
}

@article {Bratteli78,
    AUTHOR = {Bratteli, Ola and Elliott, George A.},
     TITLE = {Structure spaces of approximately finite-dimensional
              {$C\sp{\ast} $}-algebras. {II}},
   JOURNAL = {J. Functional Analysis},
  FJOURNAL = {Journal of Functional Analysis},
    VOLUME = {30},
      YEAR = {1978},
    NUMBER = {1},
     PAGES = {74--82},
      ISSN = {0022-1236},
   MRCLASS = {46L05},
  MRNUMBER = {513479},
MRREVIEWER = {J. W. Bunce},
       DOI = {10.1016/0022-1236(78)90056-3},
       URL = {https://doi.org/10.1016/0022-1236(78)90056-3},
}

@article {Elliott76,
    AUTHOR = {Elliott, George A.},
     TITLE = {On the classification of inductive limits of sequences of
              semisimple finite-dimensional algebras},
   JOURNAL = {J. Algebra},
  FJOURNAL = {Journal of Algebra},
    VOLUME = {38},
      YEAR = {1976},
    NUMBER = {1},
     PAGES = {29--44},
      ISSN = {0021-8693},
   MRCLASS = {46L05 (16A46)},
  MRNUMBER = {397420},
MRREVIEWER = {Horst Behncke},
       DOI = {10.1016/0021-8693(76)90242-8},
       URL = {https://doi.org/10.1016/0021-8693(76)90242-8},
}

@article {CHS-Norms,
    AUTHOR = {Aguilar, Konrad and Ch\'{a}vez, \'{A}ngel and Garcia, Stephan Ramon
              and Vol\v{c}i\v{c}, Jurij},
     TITLE = {Norms on complex matrices induced by complete homogeneous
              symmetric polynomials},
   JOURNAL = {Bull. Lond. Math. Soc.},
  FJOURNAL = {Bulletin of the London Mathematical Society},
    VOLUME = {54},
      YEAR = {2022},
    NUMBER = {6},
     PAGES = {2078--2100},
      ISSN = {0024-6093},
   MRCLASS = {47A30 (15A60 16R30)},
  MRNUMBER = {4523751},
       DOI = {10.1112/blms.12679},
       URL = {https://doi.org/10.1112/blms.12679},
}

@article {RVN1,
    AUTHOR = {Ch\'{a}vez, \'{A}ngel and Garcia, Stephan Ramon and Hurley, Jackson},
     TITLE = {Norms on complex matrices induced by random vectors},
   JOURNAL = {Canad. Math. Bull.},
  FJOURNAL = {Canadian Mathematical Bulletin. Bulletin Canadien de
              Math\'{e}matiques},
    VOLUME = {66},
      YEAR = {2023},
    NUMBER = {3},
     PAGES = {808--826},
      ISSN = {0008-4395},
   MRCLASS = {60B20 (05E05 15A60)},
  MRNUMBER = {4651637},
       DOI = {10.4153/s0008439522000741},
       URL = {https://doi.org/10.4153/s0008439522000741},
}

@article {RVN2,
    AUTHOR = {Ch\'{a}vez, \'{A}ngel and Garcia, Stephan Ramon and Hurley, Jackson},
     TITLE = {Norms on complex matrices induced by random vectors {II}:
              extension of weakly unitarily invariant norms},
   JOURNAL = {Canad. Math. Bull.},
  FJOURNAL = {Canadian Mathematical Bulletin. Bulletin Canadien de
              Math\'{e}matiques},
    VOLUME = {67},
      YEAR = {2024},
    NUMBER = {2},
     PAGES = {447--457},
      ISSN = {0008-4395},
   MRCLASS = {47A30 (15A60 16R30)},
  MRNUMBER = {4751519},
       DOI = {10.4153/S0008439523000875},
       URL = {https://doi.org/10.4153/S0008439523000875},
}

@book {DavisInterpolation,
    AUTHOR = {Davis, Philip J.},
     TITLE = {Interpolation and approximation},
      NOTE = {Republication, with minor corrections, of the 1963 original,
              with a new preface and bibliography},
 PUBLISHER = {Dover Publications, Inc., New York},
      YEAR = {1975},
     PAGES = {xv+393},
   MRCLASS = {41-02 (42-02)},
  MRNUMBER = {380189},
}

@article{dixmier,
title = {On some {$C^*$}-algebras considered by {G}limm},
journal = {Journal of Functional Analysis},
volume = {1},
number = {2},
pages = {182-203},
year = {1967},
issn = {0022-1236},
doi = {https://doi.org/10.1016/0022-1236(67)90031-6},
url = {https://www.sciencedirect.com/science/article/pii/0022123667900316},
author = {J Dixmier}
}

@article{factorization2,
   AUTHOR = {Garcia, Stephan Ramon and Omar, Mohamed and O'Neill,
              Christopher and Yih, Samuel},
     TITLE = {Factorization length distribution for affine semigroups {II}:
              asymptotic behavior for numerical semigroups with arbitrarily
              many generators},
   JOURNAL = {J. Combin. Theory Ser. A},
  FJOURNAL = {Journal of Combinatorial Theory. Series A},
    VOLUME = {178},
      YEAR = {2021},
     PAGES = {Paper No. 105358, 34},
      ISSN = {0097-3165,1096-0899},
   MRCLASS = {20M14 (05E05 20M13)},
  MRNUMBER = {4175889},
MRREVIEWER = {Kurt\ D.\ Herzinger},
       DOI = {10.1016/j.jcta.2020.105358},
       URL = {https://doi.org/10.1016/j.jcta.2020.105358},
}

@misc{factorization5,
      title={Factorization length distribution for affine semigroups {V}: explicit asymptotic behavior of weighted factorization lengths on numerical semigroups}, 
      author={Stephan Ramon Garcia and Gabe Udell},
      year={2025},
      eprint={2503.01027},
      archivePrefix={arXiv},
      primaryClass={math.CO},
      note={\url{https://arxiv.org/abs/2503.01027}} 
}

@misc{davidson,
      title={{$C^*$} {Algebras} by {Example}}, 
      author={Kenneth R. Davidson},
    volume={6},
      year={1996},
isbn={978-0-8218-0599-2},
publisher={ Providence, R.I. : American Mathematical Society}
}

@book {Murphy,
    AUTHOR = {Murphy, Gerard J.},
     TITLE = {{$C^*$}-algebras and operator theory},
 PUBLISHER = {Academic Press, Inc., Boston, MA},
      YEAR = {1990},
     PAGES = {x+286},
      ISBN = {0-12-511360-9},
   MRCLASS = {46Lxx (46-01)},
  MRNUMBER = {1074574},
MRREVIEWER = {E. Gerlach},
}

@book{lectures-on-op-theory,
	series = {Fields {Institute} monographs},
	title = {Lectures on {Operator} {Theory}},
	isbn = {978-0-8218-0821-4},
	url = {https://books.google.ca/books?id=afHBx8081RIC},
	publisher = {American Mathematical Society},
	author = {Bhat, B.V.R. and Elliott, G.A. and Fillmore, P.A.},
	year = {1999},
	lccn = {99052254},
}

@book {Phillips,
    AUTHOR = {Phillips, George M.},
     TITLE = {Interpolation and approximation by polynomials},
    SERIES = {CMS Books in Mathematics/Ouvrages de Math\'{e}matiques de la SMC},
    VOLUME = {14},
 PUBLISHER = {Springer-Verlag, New York},
      YEAR = {2003},
     PAGES = {xiv+312},
      ISBN = {0-387-00215-4},
   MRCLASS = {41-01 (41A05 41A10)},
  MRNUMBER = {1975918},
MRREVIEWER = {Allan Pinkus},
       DOI = {10.1007/b97417},
       URL = {https://doi.org/10.1007/b97417},
}

@article{curry-schoenberg,
	title = {On {Pólya} frequency functions {IV}: {The} fundamental spline functions and their limits},
	volume = {17},
	issn = {1565-8538},
	url = {https://doi.org/10.1007/BF02788653},
	doi = {10.1007/BF02788653},
	number = {1},
	journal = {Journal d’Analyse Mathématique},
	author = {Curry, H. B. and Schoenberg, I. J.},
	month = dec,
	year = {1966},
	pages = {71--107},
}

@article{marechal, title={Sur La Classification Des Symetries Des {$C^*$}-Algebres {UHF}}, volume={31}, DOI={10.4153/CJM-1979-055-7}, number={3}, journal={Canadian Journal of Mathematics}, author={Fack, Th. and Marechal, O.}, year={1979}, pages={496–523}}

@book {Assi,
    AUTHOR = {Assi, Abdallah and D'Anna, Marco and Garc\'{\i}a-S\'{a}nchez, Pedro A.},
     TITLE = {Numerical semigroups and applications},
    SERIES = {RSME Springer Series},
    VOLUME = {3},
      NOTE = {Second edition [of  3558713]},
 PUBLISHER = {Springer, Cham},
      YEAR = {[2020] \copyright 2020},
     PAGES = {xiv+138},
      ISBN = {978-3-030-54942-8; 978-3-030-54943-5},
   MRCLASS = {20M14},
  MRNUMBER = {4230109},
       DOI = {10.1007/978-3-030-54943-5},
       URL = {https://doi.org/10.1007/978-3-030-54943-5},
}

@book{Rosales,
	series = {Developments in {Mathematics}},
	title = {Numerical {Semigroups}},
	isbn = {978-1-4614-2456-7},
	url = {https://books.google.ca/books?id=DVNJuAAACAAJ},
	publisher = {Springer New York},
	author = {Rosales, J.C. and García-Sánchez, P.A.},
	year = {2012},
}

@article {deBoorRecursion,
    AUTHOR = {de Boor, Carl},
     TITLE = {On calculating with {$B$}-splines},
   JOURNAL = {J. Approximation Theory},
  FJOURNAL = {Journal of Approximation Theory},
    VOLUME = {6},
      YEAR = {1972},
     PAGES = {50--62},
      ISSN = {0021-9045,1096-0430},
   MRCLASS = {41A15},
  MRNUMBER = {338617},
MRREVIEWER = {C.\ A.\ Hall},
       DOI = {10.1016/0021-9045(72)90080-9},
       URL = {https://doi.org/10.1016/0021-9045(72)90080-9},
}

@book {practicalGuide,
    AUTHOR = {de Boor, Carl},
     TITLE = {A practical guide to splines},
    SERIES = {Applied Mathematical Sciences},
    VOLUME = {27},
   EDITION = {Revised},
 PUBLISHER = {Springer-Verlag, New York},
      YEAR = {2001},
     PAGES = {xviii+346},
      ISBN = {0-387-95366-3},
   MRCLASS = {41-01 (41A15 65D05 65D07 65D10)},
  MRNUMBER = {1900298},
MRREVIEWER = {Gerlind\ Plonka},
}

@book {Handbook,
    AUTHOR = {Micula, Gheorghe and Micula, Sanda},
     TITLE = {Handbook of splines},
    SERIES = {Mathematics and its Applications},
    VOLUME = {462},
 PUBLISHER = {Kluwer Academic Publishers, Dordrecht},
      YEAR = {1999},
     PAGES = {xvi+604},
      ISBN = {0-7923-5503-2},
   MRCLASS = {41-00 (00A15 41-01 41A15 65D07)},
  MRNUMBER = {1673026},
       DOI = {10.1007/978-94-011-5338-6},
       URL = {https://doi.org/10.1007/978-94-011-5338-6},
}

@article {NathansonParts,
    AUTHOR = {Nathanson, Melvyn B.},
     TITLE = {Partitions with parts in a finite set},
   JOURNAL = {Proc. Amer. Math. Soc.},
  FJOURNAL = {Proceedings of the American Mathematical Society},
    VOLUME = {128},
      YEAR = {2000},
    NUMBER = {5},
     PAGES = {1269--1273},
      ISSN = {0002-9939},
   MRCLASS = {11P82 (05A17)},
  MRNUMBER = {1705753},
MRREVIEWER = {Yifan Yang},
       DOI = {10.1090/S0002-9939-00-05606-9},
       URL = {https://doi.org/10.1090/S0002-9939-00-05606-9},
}

@book {RamirezDiop,
    AUTHOR = {Ram\'irez Alfons\'in, J. L.},
     TITLE = {The {D}iophantine {F}robenius problem},
    SERIES = {Oxford Lecture Series in Mathematics and its Applications},
    VOLUME = {30},
 PUBLISHER = {Oxford University Press, Oxford},
      YEAR = {2005},
     PAGES = {xvi+243},
      ISBN = {978-0-19-856820-9; 0-19-856820-7},
   MRCLASS = {11D72 (05A17 11D04 11Y50 20M14)},
  MRNUMBER = {2260521},
MRREVIEWER = {Ali Sinan Sert\"oz},
       DOI = {10.1093/acprof:oso/9780198568209.001.0001},
       URL = {https://doi.org/10.1093/acprof:oso/9780198568209.001.0001},
}

@article {alievhenkaicke,
    AUTHOR = {Aliev, I. and Henk, M. and Hinrichs, A.},
     TITLE = {Expected {F}robenius numbers},
   JOURNAL = {J. Combin. Theory Ser. A},
  FJOURNAL = {Journal of Combinatorial Theory. Series A},
    VOLUME = {118},
      YEAR = {2011},
    NUMBER = {2},
     PAGES = {525--531},
      ISSN = {0097-3165},
   MRCLASS = {11D07 (11D04 11D45 11D85 11H06)},
  MRNUMBER = {2739501},
MRREVIEWER = {Clemens Fuchs},
       DOI = {10.1016/j.jcta.2009.12.012},
       URL = {http://dx.doi.org/10.1016/j.jcta.2009.12.012},
}

@article {RNS,
    AUTHOR = {De Loera, Jesus and O'Neill, Christopher and Wilburne, Dane},
     TITLE = {Random numerical semigroups and a simplicial complex of
              irreducible semigroups},
   JOURNAL = {Electron. J. Combin.},
  FJOURNAL = {Electronic Journal of Combinatorics},
    VOLUME = {25},
      YEAR = {2018},
    NUMBER = {4},
     PAGES = {Paper 4.37, 16},
   MRCLASS = {20M14 (05E45)},
  MRNUMBER = {3891104},
MRREVIEWER = {D. Llena},
}

@article {bourgainsinai,
    AUTHOR = {Bourgain, J. and Sina\u\i , Ya. G.},
     TITLE = {Limit behavior of large {F}robenius numbers},
   JOURNAL = {Uspekhi Mat. Nauk},
  FJOURNAL = {Rossi\u\i skaya Akademiya Nauk. Moskovskoe Matematicheskoe
              Obshchestvo. Uspekhi Matematicheskikh Nauk},
    VOLUME = {62},
      YEAR = {2007},
    NUMBER = {4(376)},
     PAGES = {77--90},
      ISSN = {0042-1316},
   MRCLASS = {11D85 (11D04)},
  MRNUMBER = {2358737},
MRREVIEWER = {S. A. Stepanov},
       DOI = {10.1070/RM2007v062n04ABEH004429},
       URL = {http://dx.doi.org/10.1070/RM2007v062n04ABEH004429},
}

@book {Wegge-Olsen,
    AUTHOR = {Wegge-Olsen, N. E.},
     TITLE = {{$K$}-theory and {$C^*$}-algebras},
    SERIES = {Oxford Science Publications},
      NOTE = {A friendly approach},
 PUBLISHER = {The Clarendon Press, Oxford University Press, New York},
      YEAR = {1993},
     PAGES = {xii+370},
      ISBN = {0-19-859694-4},
   MRCLASS = {46L80 (19Kxx 46L05)},
  MRNUMBER = {1222415},
MRREVIEWER = {Mahmood\ Khoshkam},
}

@article {Jacelon2025,
    AUTHOR = {Jacelon, Bhishan and Khavkine, Igor},
     TITLE = {Operator {$K$}-theoretic analysis of random adjacency
              matrices},
   JOURNAL = {New York J. Math.},
  FJOURNAL = {New York Journal of Mathematics},
    VOLUME = {31},
      YEAR = {2025},
     PAGES = {749--791},
      ISSN = {1076-9803},
   MRCLASS = {46L35 (15B52 19K99 37B10 46L80 60B20)},
  MRNUMBER = {4905722},
MRREVIEWER = {Zhichao\ Liu},
}

\appendix


\end{document}